\newtheorem{theoreme}{Th\'eor\`eme}[section]
\newtheorem{cor}[theoreme]{Corollaire}
\newtheorem{lemme}[theoreme]{Lemme}
\newtheorem{proposition}[theoreme]{Proposition}
\numberwithin{equation}{section}
\newcommand{\bA}{\mathbb{A}}
\newcommand{\bC}{\mathbb{C}}
\newcommand{\bN}{\mathbb{N}}
\newcommand{\bQ}{\mathbb{Q}}
\newcommand{\bR}{\mathbb{R}}
\newcommand{\bZ}{\mathbb{Z}}
\newcommand{\cC}{{\mathcal{C}}}
\newcommand{\cD}{{\mathcal{D}}}
\newcommand{\cE}{{\mathcal{E}}}
\newcommand{\cG}{{\mathcal{G}}}
\newcommand{\cH}{{H}} 
\newcommand{\cK}{{\mathcal{K}}}
\newcommand{\cM}{{\mathcal{M}}}
\newcommand{\cO}{{\mathcal{O}}}
\newcommand{\cR}{{\mathcal{R}}}
\newcommand{\cU}{{\mathcal{U}}}
\newcommand{\card}{\mathrm{Card}}
\newcommand{\dz}{\mathrm{d}\/z}
\newcommand{\et}{\quad\text{et}\quad}
\newcommand{\Mat}{\mathrm{Mat}}
\newcommand{\Qbar}{\bar{\bQ}}
\newcommand{\ssi}{\quad\Longleftrightarrow\quad}
\newcommand{\tcC}{\tilde{\cC}}
\newcommand{\tgamma}{\tilde{\gamma}}
\newcommand{\tG}{\tilde{G}}
\newcommand{\ua}{\mathbf{a}}
\newcommand{\ue}{\mathbf{e}}
\newcommand{\un}{\mathbf{n}}
\newcommand{\uun}{\mathbf{1}}
\newcommand{\disp}{\displaystyle}
\renewcommand{\cG}{G}
\renewcommand\Re{\operatorname{Re}}
\renewcommand\Im{\operatorname{Im}}
\tikzset{->-/.style={decoration={
  markings,
  mark=at position #1 with {\arrow{>}}},postaction={decorate}},
  ->-/.default=0.5,
  }
\begin{document}

\baselineskip=16pt

\title[Approximation des valeurs de la fonction exponentielle]
{Approximation simultan\'ee des valeurs de la fonction exponentielle dans les ad\`eles}
\author{Damien Roy}

\subjclass[2010]{Primaire 11J13; Secondaire 11J61, 11J82, 11H06.}
\keywords{ad\`eles, approximations d'Hermite, calcul de volumes,
chemins de descente maximale, fonction exponentielle, fractions continues,
g\'eom\'etrie des nombres, mesures d'approximation, racines des polyn\^omes,
semi-r\'esultant.}
\thanks{Recherche support\'ee en partie par une subvention \`a la d\'ecouverte du CRSNG}

\begin{abstract}
On montre que les approximations d'Hermite des valeurs de la
fonction exponentielle en des nombres alg\'ebriques distincts
sont essentiellement optimales quand on les
consid\`ere d'un point de vue ad\'elique, c'est-\`a-dire quand on
prend en compte les quotients de ces valeurs qui ont un sens dans les
diff\'erents compl\'et\'es (archim\'ediens ou $p$-adiques) d'un corps
de nombres contenant ces nombres alg\'ebriques.
\end{abstract}

\maketitle

%
%

\section{Introduction}
\label{sec:intro}

On sait, gr\^ace \`a Euler, que le nombre $e$ poss\`ede un d\'eveloppement
en fraction continue qui consiste de progressions arithm\'etiques imbriqu\'ees
\[
 e=[2,(1,2n,1)_{n=1}^\infty] = [2,1,2,1,1,4,1,1,6,1,\dots].
\]
Euler, Sundman et Hurwitz ont aussi montr\'e un r\'esultat semblable
pour les nombres $e^{2/m}$ o\`u $m$ est un entier non nul \cite[\S\S31-32]{Pe1929}.
En cons\'equence, on peut donner de tr\`es bonnes
mesures d'approximation rationnelle de ces nombres (voir par exemple les
r\'esultats enti\`erement explicites de Bundschuch \cite[Satz 2]{Bu1971},
pour le cas o\`u $m$ est pair).
C'est cette derni\`ere propri\'et\'e qui nous int\'eresse ici.
Nous en proposons l'explication heuristique suivante: 
les quotients $2/m$ avec $m\in\bZ\setminus\{0\}$ sont
les seuls nombres rationnels $z$ non nuls pour lesquels la s\'erie usuelle
\begin{equation}
 \label{intro:eq:e^z}
 e^z=\sum_{k=0}^\infty \frac{z^k}{k!}
\end{equation}
ne converge qu'au sens r\'eel.  En effet, soit $p$ un nombre premier et
soit $\bC_p$ le compl\'et\'e de la cloture alg\'ebrique $\Qbar$ de $\bQ$
pour la valeur absolue $p$-adique de $\bQ$ \'etendue \`a $\Qbar$, avec
$|p|_p=p^{-1}$.  On sait que, pour $z\in\bC_p$, la s\'erie \eqref{intro:eq:e^z}
converge dans $\bC_p$ si et seulement si $|z|_p<p^{-1/(p-1)}$. En particulier, pour
un nombre rationnel $z$, vu comme \'el\'ement de $\bC_p$, cette s\'erie
converge si et seulement si le num\'erateur de $z$ est divisible
par $p$ lorsque $p\neq 2$, et par $4$ lorsque $p=2$.

Ce ph\'enom\`ene se g\'en\'eralise aux nombres alg\'ebriques. En effet,
soit $K$ un corps de nombres, c'est-\`a-dire une
extension alg\'ebrique de degr\'e fini de $\bQ$. Alors toute valeur
absolue sur $K$ induit la m\^eme topologie sur $K$ que celle provenant d'un
plongement de $K$ dans $\bC$ ou dans $\bC_p$ pour un nombre premier $p$.
On dit que de tels plongements d\'efinissent la m\^eme place $v$ de $K$
s'ils induisent la m\^eme valeur absolue sur $K$ not\'ee $|\ |_v$.  On
d\'esigne alors par $K_v$ le compl\'et\'e de $K$ pour cette valeur absolue.
Si la place $v$ est associ\'ee \`a un plongement de $K$ dans $\bC$, on dit
qu'elle est archim\'edienne et on \'ecrit $v\mid \infty$.  Sinon, on dit qu'elle est
ultram\'etrique, et on \'ecrit $v\mid p$ si elle est associ\'ee \`a un
plongement de $K$ dans $\bC_p$.  Si $\alpha\in K$, alors la
s\'erie pour $e^\alpha$ converge dans chaque compl\'et\'e archim\'edien
de $K$ mais ne converge que dans un nombre fini de compl\'et\'es
ultram\'etriques.  En particulier,
si $K$ ne poss\`ede qu'une seule place archim\'edienne, c'est-\`a-dire si
$K=\bQ$ ou si $K$ est quadratique imaginaire, alors il arrive que
$e^\alpha$ n'ait un sens que pour cette place.  Dans ce cas, nous
obtenons l'estimation suivante o\`u $\cO_K$ d\'esigne l'anneau des
entiers de $K$.

\begin{proposition}
\label{intro:prop:imaginaire}
Soit $K\subset \bC$ le corps $\bQ$ ou une extension quadratique imaginaire
de $\bQ$, et soit $\alpha$ un \'el\'ement non nul de $K$ tel que
$|\alpha|_v\ge p^{-1/(p-1)}$ pour tout nombre premier $p$
et tout place $v$ de $K$ avec $v\mid p$.  Alors, pour tout choix
de $x,y\in\cO_K$ avec $x\neq 0$, on a
\[
 |x|\,|xe^\alpha-y| \ge c(\log |x|)^{-2g-1}
\]
o\`u $g$ d\'esigne le nombre de places $v$ de $K$ avec $v\mid \infty$
ou $|\alpha|_v\neq 1$ et o\`u $c>0$ est une constante qui ne d\'epend
que de $\alpha$ et de $K$.
\end{proposition}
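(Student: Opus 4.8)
The plan is to build the estimate on Hermite's simultaneous approximations to $1$ and $e^z$, specialised at $\alpha$, and then to run a geometry-of-numbers descent against a hypothetical very good pair $(x,y)$.

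First I would recall Hermite's polynomials: for each integer $n\ge 1$ there are $A_n,B_n\in\bZ[z]$ of degree $\le n$ with
\[
 A_n(z)e^z-B_n(z)=R_n(z):=\frac1{n!}\int_0^z(z-t)^nt^ne^t\dt ,
\]
so $R_n$ vanishes to order $2n+1$ at $z=0$ and $A_n(0)=\pm\binom{2n}{n}n!$. One has the archimedean estimates $|A_n(\alpha)|+|B_n(\alpha)|\ll c^{\,n}n!$ and $|R_n(\alpha)|\ll c^{\,n}/n!$ with $c=c(|\alpha|)$, together with the Casoratian identity
\[
 A_n(z)B_{n+1}(z)-A_{n+1}(z)B_n(z)=\pm 2\,z^{2n+1} ,
\]
a semi-r\'esultant computation, which makes consecutive approximations linearly independent. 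Writing $\alpha=\gamma/\delta$ with $\gamma,\delta\in\cO_K$ and putting $P_n=\delta^nA_n(\alpha)$, $Q_n=\delta^nB_n(\alpha)\in\cO_K$, we get $P_ne^\alpha-Q_n=\delta^nR_n(\alpha)=:\rho_n$ and $P_nQ_{n+1}-P_{n+1}Q_n=\pm 2\gamma^{2n+1}\neq 0$.

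The arithmetic heart is to control $|P_n|_v$, $|Q_n|_v$ at every place $v$. At $v\mid\infty$ these are $\ll c^{\,n}n!$. At a finite place $v\mid p$ with $|\alpha|_v=1$ the ultrametric inequality gives $\min(|P_n|_v,|Q_n|_v)=1$, so no $v$-content accumulates and the place is harmless; the content of $(P_n,Q_n)$ lives only at the $g-1$ finite places with $|\alpha|_v\neq1$, where $A_n(0)=\pm(2n)!/n!$ together with the ultrametric inequality give $|P_n|_v,|Q_n|_v\asymp c_v^{\,n}$ for explicit $c_v<1$. This is where the hypothesis $|\alpha|_v\ge p^{-1/(p-1)}$ is used: it ensures the $p$-adic series for $e^z$ never converges and keeps the $p$-adic valuations of $A_n(\alpha),B_n(\alpha)$ in the predicted range, so that after dividing $(P_n,Q_n)$ by their content one is left, at each such place, with only a sub-exponential residue --- the eventual source of the power of $\log$.

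For the descent, fix $x,y\in\cO_K$, $x\neq0$, and put $E=|x|\,|xe^\alpha-y|$. Since $K=\bQ$ or is imaginary quadratic it has one archimedean place, so every nonzero element of $\cO_K$ has absolute value $\ge1$. Let $N$ be the least index with $|x|\,|\rho_N|\le\tfrac12$; then $N\asymp\log|x|/\log\log|x|$. As $P_nQ_{n+1}-P_{n+1}Q_n\neq0$, one of $n=N,N+1$ has $xQ_n-yP_n\neq0$, and then, using $xQ_n-yP_n=(xe^\alpha-y)P_n-x\rho_n$,
\[
 1\le|xQ_n-yP_n|\le\frac{E}{|x|}|P_n|+\tfrac12 ,\qquad\text{hence}\qquad E\ge\frac{|x|}{2|P_n|} .
\]
Inserting the size of $|P_n|$ from the previous step and the relation $N\asymp\log|x|/\log\log|x|$, the product over the $g$ relevant places of the residual factors turns into a power of $\log|x|$. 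The subtle point --- which I expect to be the main obstacle, and for which the crude two-index argument above must give way to Minkowski's theorem on the successive minima of the adelic lattice generated by the $(P_n,Q_n)$, fed by the explicit volume computations in the paper --- is to pin the exponent down exactly: each of the $g-1$ finite bad places must cost precisely $(\log|x|)^{-2}$ and the archimedean place precisely $(\log|x|)^{-1}$, giving $E\gg(\log|x|)^{-(2(g-1)+1)}=(\log|x|)^{-2g-1}$; a companion construction then shows this to be essentially optimal.
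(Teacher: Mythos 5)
Your overall strategy (Hermite's diagonal approximants, their sizes at all places, non-vanishing of consecutive determinants, then a descent against $(x,y)$) is a reasonable family of ideas, but the decisive quantitative step is missing, and the argument you actually display does not produce a power of $\log|x|$. With the sizes you state, $|P_n|\ll c^{n}n!$ and $|\rho_n|\ll c^{n}/n!$, the chain $1\le|xQ_n-yP_n|\le\frac{E}{|x|}|P_n|+\frac12$ together with the minimal choice of $N$ (so that $|x|\gg1/|\rho_{N-1}|$) yields only $E\gg c^{-2N}N^{-O(1)}$, and since $N\asymp\log|x|/\log\log|x|$ this is merely $E\gg_\epsilon|x|^{-\epsilon}$ (Bundschuh's Satz~1), not $(\log|x|)^{-2g-1}$. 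To reach a logarithmic power you must prove that, after the content reduction you allude to, the product (archimedean size of the integral approximant) times (archimedean size of its error) is polynomial in $n$; this is an exact cancellation of all exponential factors, which holds only because of the product formula combined with sharp $p$-adic upper bounds for the approximants at every place with $|\alpha|_v\neq1$ --- precisely where the hypothesis $|\alpha|_v\ge p^{-1/(p-1)}$ enters. You assert these $p$-adic bounds ($|P_n|_v\asymp c_v^{\,n}$, ``no content at places with $|\alpha|_v=1$'') without proof --- they are the analogue of the ultrametric estimates of Section~3 of the paper --- and you then concede that pinning down the exponent requires ``Minkowski's theorem \dots fed by the explicit volume computations in the paper'', i.e.\ you are invoking the paper's own proof: the two-exponential proposition of Section~11, whose lower bound $\lambda_1(\tcC_n)\ge c_4n^{-2g+1}$ is exactly the statement your sketch is missing, and from which the paper deduces the present proposition by the same $h(n)$-dichotomy you aim at. So the heart of the proof is not supplied.

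There are also concrete slips. The final bookkeeping is inconsistent: $2(g-1)+1=2g-1$, not $2g+1$, and the per-place costs you announce are not what either argument produces (note in particular that the places with $|\alpha|_v>1$, the denominator places, also count in $g$, and that the archimedean place together with the conversion from $n$ to $\log|x|$ costs more than one power). Smaller points to repair if you pursue the elementary route: the claim $\min(|P_n|_v,|Q_n|_v)=1$ at places with $|\alpha|_v=1$ needs an argument (it follows, up to a bounded factor, from your determinant identity, not from ``the ultrametric inequality'' alone); dividing a pair by its content stays inside $\cO_K^2$ only up to the class group when $K$ is imaginary quadratic; and the inequality $|xQ_n-yP_n|\ge1$ for a nonzero element of $\cO_K$ uses that $K$ has a single archimedean place, which is where the restriction on $K$ enters and should be stated.
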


Par exemple si $K=\bQ(\sqrt{-2})$, on peut prendre
$\alpha=2(1\pm\sqrt{-2})/m$ o\`u $m\in\cO_K\setminus\{0\}$.
Si $K=\bQ(\sqrt{-23})$, on peut prendre
$\alpha=(1\pm\sqrt{-23})/(2m)$ o\`u $m\in\cO_K\setminus\{0\}$.
Dans certains cas, $e^\alpha$ admet un d\'eveloppement en
fraction continue g\'en\'eralis\'e semblable \`a celui de $e$
(avec quotients partiels dans $\cO_K$) mais
nous ne consid\'erons pas cette question ici.

Plus g\'en\'eralement, soient $\alpha_1,\dots,\alpha_s$ des
\'el\'ements distincts d'un corps de nombres $K\subset \bC$.
Le th\'eor\`eme de Lindemann-Weierstrass \cite{We1885} nous apprend que
$e^{\alpha_1},\dots,e^{\alpha_s}\in\bC$ sont lin\'eairement
ind\'ependants sur $K$ et la preuve classique de ce r\'esultat,
dans toutes ses variantes (voir \cite[Appendix]{Ma1976}),
utilise les approximations d'Hermite
dont nous rappelons la d\'efinition au prochain paragraphe.  Le
but de ce travail est de montrer que ces approximations sont
quasiment optimales au sens de la g\'eom\'etrie des nombres
dans les ad\`eles de $K$, en tenant compte de toutes les places
$v$ de $K$ et de toutes les paires d'indices $i,j$ avec
$1\le i<j\le s$ telles que la s\'erie pour $e^{\alpha_i-\alpha_j}$
soit convergente dans $K_v$.  Il est possible que cette observation
soit le reflet d'une propri\'et\'e plus large des valeurs de la
fonction exponentielle.

Par exemple la s\'erie pour $e^3$ converge dans $\bR$ et dans $\bQ_3$
mais pas dans $\bQ_p$ quel que soit le nombre premier $p\neq 3$.
Alors notre approche conduit au r\'esultat suivant.

\begin{proposition}
\label{intro:prop:e3}
Pour tout entier $n\ge 1$, on d\'efinit un corps convexe $\cC_n$
de $\bR^2$ et un r\'eseau $\Lambda_n$ de $\bR^2$ par
\begin{align*}
 \cC_{n}
  &=
 \left\{ (x,y)\in\bR^2 \,;
   \quad
   |x|\le \frac{(2n)!}{n!3^{n/2}}
   \,,\quad 
   |xe^3-y|\le \Big(\frac{3}{2}\Big)^{2n}\frac{1}{n!3^{n/2}}
 \right\},\\
 \Lambda_{n}
  &=
 \left\{ (x,y)\in\bZ^2 \,;
   \quad
   |xe^3-y|_3\le 3^{-n}
 \right\}\,.
\end{align*}
Pour $i=1,2$, on note
$\lambda_i(\cC_n,\Lambda_n)$ le $i$-i\`eme minimum de $\cC_n$
relativement au r\'eseau $\Lambda_n$,c'est-\`a-dire le plus petit
$\lambda>0$ tel que $\lambda\cC_n$ contienne au moins $i$
\'el\'ements de $\Lambda_n$ lin\'eairement ind\'ependants
sur $\bQ$.  Alors on a
\[
 (cn^2)^{-1}
  \le \lambda_1(\cC_n,\Lambda_n)
  \le \lambda_2(\cC_n,\Lambda_n)
  \le cn^2,
\]
pour une constante $c>1$ ind\'ependante de $n$.
\end{proposition}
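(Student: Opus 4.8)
The statement is the incarnation, for $K=\bQ$, $s=2$ and $\{\alpha_1,\alpha_2\}=\{3,0\}$, of the general theorem of the paper, and the plan is to deduce it from that theorem once the pieces have been identified. First I would pin down the places that occur. The series $\sum_{k\ge0}z^k/k!$ at $z=3$ converges in $\bR$ and, among the $p$-adic completions of $\bQ$, in $\bQ_p$ exactly when $|3|_p<p^{-1/(p-1)}$, which holds only for $p=3$ (there $|3|_3=1/3<3^{-1/2}$, whereas $|3|_p=1$ for $p\neq 3$). Thus the single pair of indices is ``active'' only at $\infty$ and at $3$, so two places are involved, which is why the exponents in the conclusion are $\pm2$; the remaining finite places only impose $x,y\in\bZ$. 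Consequently $\cC_n$ is the archimedean factor and the condition $|xe^3-y|_3\le 3^{-n}$ is the $3$-adic factor of the adelic convex body attached to the relevant value of the parameter of the main theorem, and the inequalities claimed for $\lambda_1,\lambda_2$ are what that theorem yields after this translation.

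Next comes the elementary bookkeeping. The reduction $\bZ^2\to\bZ/3^n\bZ$, $(x,y)\mapsto xe^3-y\bmod 3^n$ (with $e^3\in 1+3\bZ_3$ reduced mod $3^n$) is onto with kernel $\Lambda_n$, so $\covol(\Lambda_n)=3^n$; and the unimodular substitution $(x,y)\mapsto(x,\,xe^3-y)$ turns $\cC_n$ into the rectangle $[-X_n,X_n]\times[-Y_n,Y_n]$ with $X_n=\tfrac{(2n)!}{n!3^{n/2}}$, $Y_n=(3/2)^{2n}\tfrac1{n!3^{n/2}}$, whence $\vol(\cC_n)=4X_nY_n=4\binom{2n}{n}(3/4)^n=4\cdot 3^n(\pi n)^{-1/2}(1+o(1))$ by Stirling. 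Minkowski's second theorem in dimension two then gives $\lambda_1\lambda_2\asymp n^{1/2}$. Since $\lambda_1\le\lambda_2$, the whole proposition follows from a single polynomial upper bound $\lambda_2(\cC_n,\Lambda_n)\le c_0\,n$ (any fixed power of $n$ would do): the lower bound on $\lambda_1$ then comes from $\lambda_1\ge(\lambda_1\lambda_2)/\lambda_2\gg n^{1/2}/n$.

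It remains to exhibit two $\bQ$-linearly independent points of $\Lambda_n$ in $c_0 n\,\cC_n$, and here one uses the Hermite--Pad\'e approximants of $e^z$ at $z=3$: for an integer $N$ there are $A_N,B_N\in\bQ[z]$ of degree $N$ with $A_N(0)=1$ and
\[
 R_N(z)\;=\;A_N(z)e^z-B_N(z)\;=\;\frac{(-1)^N}{(2N)!}\,z^{2N+1}\int_0^1 e^{zt}\,t^N(1-t)^N\dt,
\]
with $\tfrac{(2N)!}{N!}A_N,\ \tfrac{(2N)!}{N!}B_N\in\bZ[z]$. Putting $F_0^{(N)}=\tfrac{(2N)!}{N!}A_N(3)$, $F_3^{(N)}=\tfrac{(2N)!}{N!}B_N(3)$, $I_N=F_0^{(N)}e^3-F_3^{(N)}=\tfrac{(2N)!}{N!}R_N(3)$, one has $(F_0^{(N)},F_3^{(N)})\in\bZ^2$, the sizes $|F_0^{(N)}|\asymp\tfrac{(2N)!}{N!}$ and $|I_N|\asymp(9/4)^N/(N!\sqrt N)$ from Laplace's method, a $3$-adic bound $v_3(I_N)\ge\tfrac32N-O(\log N)$ from Legendre's formula applied to the expansion $(-1)^NN!\,I_N=J_N(3)=\sum_{j\ge0}\tfrac{N!(N+j)!}{j!(2N+j+1)!}3^{2N+1+j}$ of $\int_0^3(3-t)^Nt^Ne^t\dt$, and the Wronskian identity $A_N(z)B_{N+1}(z)-A_{N+1}(z)B_N(z)=\tfrac{(-1)^N(N!)^2}{(2N)!(2N+1)!}z^{2N+1}$, which at $z=3$ becomes $\det\!\big((F_0^{(N)},F_3^{(N)}),(F_0^{(N+1)},F_3^{(N+1)})\big)=\pm2\cdot3^{2N+1}$, so consecutive approximants are independent. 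By the $3$-adic bound they lie in $\Lambda_n$ as soon as $N\ge(\tfrac23+o(1))n$; but the determinant then shows that two consecutive ones span a sublattice of index $\asymp 3^{n/3}$ in $\Lambda_n$, and one checks directly that a single raw approximant only lies in $3^{\Theta(n)}\cC_n$ (at the degree $N\asymp n$ minimising its $\cC_n$-norm, both coordinates are off by a factor $\asymp 3^{n/2}$). Hence the two short vectors of $\Lambda_n$ must be extracted as short $\bZ$-combinations of a whole window of consecutive Hermite approximants — equivalently, from the continued-fraction structure of $\Lambda_n$ — and proving that two of these combinations have $\cC_n$-norm $O(n)$ with both coordinates controlled at once is exactly the ``chemin de descente maximale''/semi-r\'esultant analysis that is the technical heart of the paper; this I expect to be the main obstacle. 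Everything else — the integral representation and integrality of the approximants, the Laplace and Legendre estimates, the Wronskian, the volume computation and the appeal to Minkowski — is routine.
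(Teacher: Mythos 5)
Your reduction is sound as far as it goes: the covolume $3^n$ of $\Lambda_n$, the volume $\vol(\cC_n)\asymp 3^n n^{-1/2}$, Minkowski's second theorem, and the remark that everything follows from a polynomial upper bound on $\lambda_2(\cC_n,\Lambda_n)$ are all correct, and this skeleton (lower bound on $\lambda_1$ from the product of the minima and the volume, upper bound on $\lambda_2$ from explicit Hermite approximants) is in substance the paper's own strategy, written in classical rather than adelic language. But the decisive step --- producing two independent points of $\Lambda_n$ in $cn^2\,\cC_n$ --- is exactly the one you do not prove, and your diagnosis of what it requires is mistaken. You observe that the integer Hermite vector of diagonal order $N\asymp n$ overshoots $\cC_n$ by a factor about $3^{n/2}$ in both coordinates, and you conclude that the short lattice points must be extracted as $\bZ$-combinations of a window of consecutive approximants, via the continued-fraction structure of $\Lambda_n$, and that this is the descent-path/semi-resultant analysis of the paper. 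Neither claim is correct, and as written your argument stops exactly where the proposition begins.

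The missing observation is elementary and purely $3$-adic: not only is the combination $xe^3-y$ small at $3$, the coordinates themselves are. For $\un=(n,n)$ and $\ell\in\{1,2\}$, estimate \eqref{estultra:lemme1:eq1} of Lemma \ref{estultra:lemme1} with $\delta=3^{-1/2}$ gives $v_3\bigl(P_{\un-\ue_\ell}(0)\bigr),\,v_3\bigl(P_{\un-\ue_\ell}(3)\bigr)\ge n-O(\log n)$, while $v_3((n-1)!)\le (n-1)/2$; equivalently, the integer rows of $C_n\cdots C_1$ in Corollary \ref{rel:cor2} are divisible by $3^{n/2-O(\log n)}$ as vectors. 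So at $N=n$ you simply divide each Hermite vector by $(n-1)!$ times a suitable power of $3$ close to $3^{n/2}$ (the paper produces this global scalar by a one-dimensional adelic Minkowski argument; over $\bQ$ it is essentially a power of $3$ divided by $3^n(n-1)!$). The quotients are integer vectors; they still satisfy $|xe^3-y|_3\le 3^{-n}$ because \eqref{estultra:lemme1:eq2} gives $v_3\bigl(P_{\un-\ue_\ell}(0)e^3-P_{\un-\ue_\ell}(3)\bigr)\ge 2n-O(\log n)$, so they lie in $\Lambda_n$; archimedeanly both coordinates now fit in $n^{O(1)}\cC_n$, the $O(\log n)$ losses being absorbed by the polynomial factor (the paper's bookkeeping yields exactly the stated $cn^2$); and they remain independent by Mahler's determinant (Theorem \ref{res:thm:Mahler}) --- your Wronskian is an equivalent substitute. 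No window of approximants, no continued fractions, and none of the descent-path or semi-resultant machinery enters: the paper deduces the proposition from Proposition \ref{dexp:prop}, whose proof uses only the estimates of \S\ref{subsec:approx:Hermite} and \S\ref{sec:estultra} together with the adelic Minkowski theorem. Incidentally, your worry about the index $\asymp 3^{n/3}$ is a red herring: to bound $\lambda_2$ you only need two independent lattice points in the dilated body, not a basis of $\Lambda_n$.
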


En utilisant le fait que $3^n\bZ^2\subset\Lambda_n$, on en d\'eduit
que $\lambda_1(\cC_n,\bZ^2)\ge (c n^2 3^n)^{-1}$ pour tout $n\ge 1$.
En cons\'equence, pour chaque $\epsilon>0$, il existe une
constante $c_\epsilon>0$ telle que
\[
 |x|\,|xe^{3}-y|
  \ge c_\epsilon |x|^{-\epsilon}
\]
pour tout $(x,y)\in\bZ^2$ avec $x\neq0$. On peut m\^eme faire un
peu mieux (voir \cite[Satz 1]{Bu1971}).  Or, des calculs
num\'eriques expliqu\'es au paragraphe \ref{sec:num} donnent
\begin{equation}
 \label{intro:eq:loglog}
 |x|\,|xe^3-y|\ge (3\,\log|x|\,\log\log |x|)^{-1}
 \quad\text{si}\quad 4\le |x|\le 10^{500\,000}.
\end{equation}
Des calculs plus laborieux que nous ne d\'ecrivons pas ici
sugg\`erent m\^eme qu'il existe un r\'eel $g>0$ tel que
\[
 |x_1|\,|x_1e^3-x_2|\,|x_1e^3-x_2|_3 \ge (\log |x_1|)^{-g}
\]
pour tout $(x_1,x_2,x_3)\in\bZ^3$ avec $|x_1|$ assez grand.
Enfin, un r\'esultat important de Baker \cite{Ba1965} montre que si
$\alpha_2,\dots,\alpha_s\in\bQ$ sont des nombres rationnels
distincts non nuls, alors, pour chaque $\epsilon>0$, il existe
aussi une constante $c_\epsilon>0$ telle que
\[
 |x_1|\,|x_1e^{\alpha_2}-x_2|\cdots|x_1e^{\alpha_s}-x_s|
  \ge c_\epsilon |x_1|^{-\epsilon}
\]
pour tout $(x_1,\dots,x_s)\in\bZ^s$ avec $|x_1|\neq 0$.  Les
propri\'et\'es des approximations d'Hermite sugg\`erent que
le membre de droite de cette in\'egalit\'e
$c_\epsilon |x_1|^{-\epsilon}$ pourrait \^etre remplac\'e par
$(\log |x_1|)^{-g}$ pour une constante $g>0$ qui ne d\'epend
que de $(\alpha_2,\dots,\alpha_s)$, lorsque $|x_1|$ est assez grand.

Dans cet article, $\bN$ d\'esigne l'ensemble des entiers
positifs ou nul et $\bN_+=\bN\setminus\{0\}$ l'ensemble
des entiers positifs.

\medskip
\noindent
\textbf{Remerciements:} L'auteur remercie chaleureusement
Michel Waldschmidt pour de nombreux \'echanges sur ces
questions. En particulier, ses notes de cours \cite{Wa2008}
sont une des sources d'inspiration de ce travail.

%
%

\section{\'Enonc\'e du r\'esultat principal}
\label{sec:resultat}

Soit $K$ un corps de nombres, soit $\cO_K$ son anneau
d'entiers, soit $d=[K:\bQ]$ son degr\'e sur $\bQ$, et
soit $s$ un entier positif.  On note $\cO_K$ l'anneau
des entiers de $K$.
Pour toute place ultram\'etrique $v$ de $K$, on d\'esigne par
$\cO_v=\{x\in K_v\,;\,|x|_v\le 1\}$ l'anneau des entiers de $K_v$
et par $d_v=[K_v:\bQ_p]$ le degr\'e local de $K_v$, o\`u $p$
d\'esigne le nombre premier en dessous de $v$ (notation $v\mid p$),
c'est-\`a-dire le nombre premier $p$ tel que $|\ |_v$ \'etende
la valeur absolue $p$-adique sur $\bQ$.
Suivant McFeat \cite[\S2.2]{Mc1971}, on note
$\mu_v$ la mesure de Haar sur $K_v$ normalis\'ee de telle sorte que
$\mu_v(\cO_v)=1$.  Pour une place archim\'edienne (notation
$v\mid \infty$), on note encore $d_v=[K_v:\bR]$ le degr\'e local de
$K_v$ et on d\'esigne par $\mu_v$ la mesure de Lebesgue sur $K_v$
(ce corps est $\bR$ ou $\bC$).  On note $r_1$ (resp.\ $r_2$)
le nombre de places $v\mid \infty$ avec $d_v=1$ (resp.\ $d_v=2$),
de sorte que $d=r_1+2r_2$.

L'anneau des ad\`eles de $K$ est le produit $K_\bA=\prod_v K_v$
\'etendu \`a toute les places $v$ de $K$, avec la topologie
restreinte.  C'est un anneau localement compact qu'on
munit de la mesure de Haar $\mu$, produit des $\mu_v$.
On voit $K$ comme un sous-corps de $K_\bA$ via le plongement
diagonal.  Alors $K$ est un sous-groupe discret de $K_\bA$ et,
avec la normalisation de $\mu$, on a
\[
 \mu(K_\bA/K)=2^{-r_2}|D(K)|^{1/2},
\]
o\`u $D(K)$ d\'esigne le discriminant de $K$.
Par abus de notation, on d\'esigne encore par $\mu$ la mesure
sur $K_\bA^s$, produit de $s$ copies de $\mu$. Pour toute place
$v$ de $K$, on d\'esigne aussi par $\mu_v$ la mesure sur $K_v^s$,
produit de $s$ copies de $\mu_v$.  Avec notre normalisation de
la valeur absolue sur $K_v$, si $T\colon K_v^s\to K_v^s$ est
une application $K_v$-lin\'eaire et si $E$ est un sous-ensemble
mesurable de $K_v^s$, alors $T(E)$ est mesurable de mesure
$\mu_v(T(E))=|\det T|_v^{d_v}\mu_v(E)$.

%
%

\subsection{Minima des convexes ad\'eliques}
\label{subsec:minima}
Un convexe ad\'elique de $K^s$ est un produit
\[
  \cC=\prod_v \cC_v \subset K_\bA^s
\]
portant sur toutes les places $v$ de $K$, qui poss\`ede
les propri\'et\'es suivantes:
\begin{itemize}
\item[(i)] si $v\mid \infty$, alors $\cC_v$ est un \emph{corps convexe}
  de $K_v^s$, c'est-\`a-dire un voisinage compact
  et convexe de $0$ dans $K_v^s$ tel que $\alpha\,\cC_v=\cC_v$ pour
  tout $\alpha\in K_v$ avec $|\alpha|_v=1$,
\item[(ii)] si $v\nmid\infty$, alors $\cC_v$ est un
  sous-$\cO_v$-module de $K_v^s$ de rang $s$ et de type fini (donc libre),
\item[(iii)] $\cC_v=\cO_v^s$ pour toutes sauf un nombre fini de
  places $v$ de $K$ avec $v\nmid \infty$.
\end{itemize}
Supposons donn\'e un tel produit $\cC$.  Pour chaque $i=1,\dots,s$,
on d\'efinit son $i$-i\`eme minimum $\lambda_i(\cC)$ comme \'etant
le plus petit $\lambda>0$ tel que le convexe ad\'elique
\[
 \lambda\cC=\prod_{v\mid \infty}\lambda\cC_v \prod_{v\nmid\infty}\cC_v
\]
contienne au moins $i$ \'el\'ements de $K^s$ lin\'eairement
ind\'ependants sur $K$. Avec ces notations et notre choix de
normalisation des mesures, la version ad\'elique du
th\'eor\`eme de Minkowski s'\'enonce ainsi.

\begin{theoreme}[McFeat, Bombieri et Vaaler]
 \label{res:thm:MBV}
Pour tout convexe ad\'elique $\cC$ de $K^s$, on a
\[
  2^{sr_1}(s!)^{-d}
  \le
  \left(\lambda_1(\cC)\cdots\lambda_s(\cC)\right)^d\mu(\cC)
  \le
  2^{s(r_1+r_2)}|D(K)|^{s/2}.
\]
\end{theoreme}

On renvoie le lecteur \`a \cite[Theorem 5]{Mc1971} et
\cite[Theorem 3]{BV1983} pour la borne sup\'erieure du
produit des minimas (voir aussi la majoration de Thunder dans
\cite[Theorem 1 and Corollary]{Th2002}).  La borne inf\'erieure
donn\'ee ici est celle de \cite[Theorem 6]{Mc1971},
un peu moins pr\'ecise que celle de \cite[Theorem 6]{BV1983}.

%
%

\subsection{Approximations d'Hermite}
\label{subsec:approx:Hermite}
Soient $\alpha_1,\dots,\alpha_s$ des \'el\'ements distincts de $K$.
Pour tout $s$-uplet $\un:=(n_1,\dots,n_s)\in\bN^s$, on d\'efinit
des polyn\^omes de $K[z]$ par
\[
 f_\un(z)=(z-\alpha_1)^{n_1}\cdots(z-\alpha_s)^{n_s}
\et
 P_\un(z)=\sum_{k=0}^N f_\un^{(k)}(z)
\]
o\`u
\[
 N=n_1+\cdots+n_s
\]
repr\'esente le degr\'e de $f_\un$, et o\`u
$f_\un^{(k)}$ d\'esigne la $k$-i\`eme d\'eriv\'ee de $f_\un$
pour chaque entier $k\ge 0$. Puis on forme le point
\[
 a_\un:=\big(P_\un(\alpha_1),\dots,P_\un(\alpha_s)\big)
   \in K^s.
\]
Nous l'appelerons \emph{l'approximation
d'Hermite d'ordre $\un$ associ\'ee au $s$-uplet
$(\alpha_1,\dots,\alpha_s)$}.  Le but de ce travail
est de donner un sens pr\'ecis au terme ``approximation'',
en travaillant sur les ad\`eles de $K$.

Rappelons d'abord quelques propri\'et\'es de ces
points. Pour simplifier, commen\c{c}ons par supposer
$K\subset \bC$.  On trouve
\begin{equation}
 \label{resultat:equadiff}
 \frac{d}{dz}\big( P_\un(z)e^{-z}\big)
  = \big( P_\un'(z)-P_\un(z) \big) e^{-z}
  =  -f_\un(z)e^{-z}.
\end{equation}
Donc, pour toute paire d'indices $i,j\in\{1,\dots,s\}$, on a
\begin{equation*}
 \label{resultat:integrale_finie}
 P_\un(\alpha_i)e^{-\alpha_i}-P_\un(\alpha_j)e^{-\alpha_j}
  = \int_{\alpha_i}^{\alpha_j} f_\un(z) e^{-z}\,dz\,,
\end{equation*}
l'int\'egrale \'etant prise sur n'importe quel chemin
de $\alpha_i$ \`a $\alpha_j$ dans $\bC$.  En int\'egrant
le long du segment de droite $[\alpha_i,\alpha_j]$ qui lie
ces deux points et en notant que
\[
 \max_{z\in [\alpha_i,\alpha_j]} |f_\un(z)|
 \le
 R^N
 \quad\text{avec}\quad
 R=\max_{1\le k,\ell\le s}|\alpha_k-\alpha_\ell|,
\]
on en d\'eduit que
\begin{equation*}
 \label{resultat:maj_diff}
 \left|
 P_\un(\alpha_i)e^{-\alpha_i}-P_\un(\alpha_j)e^{-\alpha_j}
 \right|
 \le
 c_1 R^N
\end{equation*}
pour une constante $c_1>0$ ind\'ependante du choix de $i$,
$j$ et $\un$.  De m\^eme, pour $i=1,\dots,s$, l'\'equation
\eqref{resultat:equadiff} livre
\begin{equation*}
 \label{resultat:integrale_infinie}
 P_\un(\alpha_i)
  = \int_0^\infty f_\un(z+\alpha_i) e^{-z}\,dz\,,
\end{equation*}
l'int\'egrale \'etant prise sur la demi-droite $[0,\infty)\subset \bR$.
Comme $|f_\un(t+\alpha_i)|\le (t+R)^N$ pour tout $t\ge 0$,
on en d\'eduit que
\begin{equation*}
 \label{resultat:maj_point}
 |P_\un(\alpha_i)|
  \le \int_0^\infty (t+R)^N e^{-t}\,dt
  \le e^R\int_0^\infty t^N e^{-t}\,dt
  = e^R N!\,.
\end{equation*}

Plus g\'en\'eralement, soit $v$ une place archim\'edienne de $K$.
Posons
\begin{equation}
 \label{resultat:eq:Rv}
 R_v=\max_{1\le k,\ell\le s}|\alpha_k-\alpha_\ell|_v
\end{equation}
et choisissons un plongement $\sigma\colon K\to\bC$
tel que $|\alpha|_v=|\sigma(\alpha)|$ pour tout
$\alpha\in K$.  Alors, pour toute paire d'indices
$i,j\in\{1,\dots,s\}$, les calculs ci-dessus livrent
\begin{align}
 \label{resultat:v:maj_diff}
 \left|
  P_\un(\alpha_i)e^{-\alpha_i}-P_\un(\alpha_j)e^{-\alpha_j}
 \right|_v
 &=
 \left|
  \int_{\sigma(\alpha_i)}^{\sigma(\alpha_j)} f^\sigma_\un(z) e^{-z}\,dz
 \right|
 \le c_v R_v^N,\\
 \label{resultat:v:maj_point}
 \left| P_\un(\alpha_i) \right|_v
 &\le
  e^{R_v} N!\,,
\end{align}
o\`u $f^\sigma_\un$ d\'esigne l'image de $f_\un$ sous
l'homomorphisme d'anneaux de $K[z]$ dans $\bC[z]$ qui fixe $z$
et \'etend $\sigma$, et o\`u $c_v>0$ ne d\'epend que de $v$
et de $\alpha_1,\dots,\alpha_s$.  Ainsi, $a_\un$ est une
approximation projective de $(e^{\alpha_1},\dots,e^{\alpha_s})$
en chaque place archim\'edienne de $K$.

Dans ce travail, nous \'etablissons une majoration de
l'int\'egrale de \eqref{resultat:v:maj_diff} qui est plus
fine que $c_vR_v^N$ pour chaque place archim\'edienne $v$
de $K$.  Nous donnons aussi des analogues de
\eqref{resultat:v:maj_diff} et de \eqref{resultat:v:maj_point}
pour les places ultram\'etriques $v$ de $K$ chaque fois que
leur membre de gauche poss\`ede un sens dans $K_v$.
Plus pr\'ecis\'ement, comme $e^{\alpha_j-\alpha_i}$ peut
avoir un sens dans $K_v$ sans que $e^{\alpha_i}$ et
$e^{\alpha_j}$ en aient un, nous consid\'erons plut\^ot
les quantit\'es
$|P_\un(\alpha_i)e^{\alpha_j-\alpha_i}-P_\un(\alpha_j)|_v$.
Ici encore, nous aurons besoin d'estimations fines alors que
d'habitude on s'affranchit de mani\`ere exp\'editive des estimations
ultram\'etriques.  En g\'en\'eral, on choisit un d\'enominateur
commun $b$ de $\alpha_1,\dots,\alpha_s$, c'est-\`a-dire un
entier $b\ge 1$ tel que
$b\alpha_1,\dots,b\alpha_s\in \cO_K$.  Alors le polyn\^ome
$g(t):=b^Nf(t/b)$ est \`a coefficients dans $\cO_K$ et, pour tout
$i=1,\dots,s$, on trouve
\begin{equation*}
 \label{resultat:denom}
 \frac{b^N}{(n_i)!}P_\un(\alpha_i)
 = \sum_{k=n_i}^N \frac{b^N}{(n_i)!} f^{(k)}(\alpha_i)
 = \sum_{k=n_i}^N \frac{b^k k!}{(n_i)!} \cdot \frac{g^{(k)}(b\alpha_i)}{k!}
 \in \cO_K\,.
\end{equation*}
Par exemple, si $n_1=\cdots=n_s=n$, cela implique que $(b^N/n!)a_\un\in\cO_K^s$.

Les estimations ci-dessus sont des ingr\'edients-cl\'es dans la preuve
classique du th\'eor\`eme de Lindemann-Weiertrass visant \`a montrer que
$e^{\alpha_1},\dots,e^{\alpha_s}$ sont lin\'eairement ind\'ependants sur $K$.
Il en manque cependant encore deux.  Le premier est une astuce de r\'eduction
de Weierstrass qui est expliqu\'ee dans \cite[Appendix, \S3]{Ma1976}
(voir aussi \cite[Chapitre 1, \S3]{Ba1975}).  Le second est
l'existence de familles de $s$ approximations lin\'eairement ind\'ependantes
sur $K$. Hermite lui-m\^eme avait soulign\'e et r\'esolu cette
difficult\'e pour \'etablir la transcendance de $e$.  Nous
utiliserons ici le remarquable r\'esultat de Mahler que voici.

\begin{theoreme}[Mahler]
\label{res:thm:Mahler}
Supposons que $\un=(n_1,\dots,n_s)\in\bN_+^s$ ait toutes ses
coordonn\'ees positives.  On note $\ue_1=(1,0,\dots,0),\dots,
\ue_s=(0,\dots,0,1)$ les \'el\'ements de la base canonique de
$\bZ^s$.  Alors, on a
\begin{equation}
 \label{resultat:det:Mahler}
 \Delta_\un
 :=
 \det(a_{\un-\ue_1},\dots,a_{\un-\ue_s})
 =
 \prod_{i=1}^s
     \left(
     (n_i-1)!\prod_{k\neq i} (\alpha_i-\alpha_k)^{n_k}
     \right)
 \neq 0.
\end{equation}
\end{theoreme}

La preuve de Mahler est astucieuse.  Elle est donn\'ee
dans \cite[\S8]{Ma1932} et reprise
dans \cite[Appendix, \S16]{Ma1976}.  Dans le cas o\`u
$n_1=\dots=n_s$, ce r\'esultat est d\^u \`a Hermite
\cite{He1873}.  La preuve d'Hermite est diff\'erente
et utilise les relations de r\'ecurrence que satisfont
les points $\ua_\un$ et que nous rappelons dans l'appendice
\ref{sec:rel}.

%
%

\subsection{\'Enonc\'e du r\'esultat principal}
\label{subsec:res}
Avec les notations pr\'ec\'edantes, on note $E$ l'ensemble
fini constitu\'e des places archim\'ediennes de $K$
ainsi que des places ultram\'etriques $v$ de $K$ telles que
$|\alpha_i-\alpha_j|_v\neq 1$ pour au moins un couple d'entiers
$i,j\in\{1,\dots,s\}$ avec $i\neq j$.  Pour chaque $s$-uplet
$\un=(n_1,\dots,n_s)\in\bN_+^s$, on note $N$ sa somme et
on d\'efinit un convexe ad\'elique $\cC_\un=\prod_v\cC_{\un,v}$
de $K^s$ de la mani\`ere suivante.
\begin{itemize}
\item[(i)] Si $v\,|\,\infty$ est la place associ\'ee \`a un
 plongement $\sigma\colon K\hookrightarrow\bC$, on d\'efinit
 $R_v$ par \eqref{resultat:eq:Rv}. Alors $\cC_{\un,v}$ est
 l'ensemble des points $(x_1,\dots,x_s)\in K_v^s$ qui satisfont
\begin{equation}
 \label{res:eq:C_arch}
 |x_i|_v\le e^{R_v} (N-1)!
 \et
 |x_ie^{\alpha_j-\alpha_i}-x_j|_v
 \le \max_{1\le k\le s}
     \left|
       \int_{\sigma(\alpha_i)}^{\sigma(\alpha_j)}
          f_{\un-\ue_k}^\sigma(z)e^{\sigma(\alpha_j)-z}\,dz
     \right|
\end{equation}
pour chaque paire d'entiers $i,j\in\{1,\dots,s\}$ avec $i\neq j$.
\smallskip
\item[(ii)]
Si $v\in E$ et si $v\,|\,p$ pour un nombre premier $p$,
alors $\cC_{\un,v}$ est l'ensemble
des points $(x_1,\dots,x_s)\in K_v^s$ qui satisfont
\begin{equation}
 \label{res:eq:ultra1}
 |x_i|_v
  \le p^3 N
      \prod_{1\le k\le s}
      \max\big\{|\alpha_i-\alpha_k|_v,\,p^{-1/(p-1)}\big\}^{n_k}
\end{equation}
pour $i=1,\dots,s$, ainsi que
\begin{equation}
 \label{res:eq:ultra2}
 |x_ie^{\alpha_j-\alpha_i}-x_j|_v
 \le p^3 N
      \prod_{1\le k\le s}
      \max\big\{|\alpha_i-\alpha_k|_v,|\alpha_j-\alpha_k|_v\big\}^{n_k}.
\end{equation}
pour chaque paire d'entiers $i,j\in\{1,\dots,s\}$ avec
$0 < |\alpha_j-\alpha_i|_v < p^{-1/(p-1)}$.
\smallskip
\item[(iii)] Enfin, si $v\notin E$, alors $\cC_{\un,v}$ est l'ensemble
des points $(x_1,\dots,x_s)\in K_v^s$ qui satisfont
\[
 |x_i|_v \le |(n_i-1)!|_v
\]
pour $i=1,\dots,s$.
\end{itemize}
La particularit\'e cruciale de ces convexes ad\'eliques $\cC_\un$
est que les formes lin\'eaires qui les d\'efinissent ne font
intervenir que les valeurs complexes ou $p$-adiques de la
fonction exponentielle en les nombres $\alpha_i$ ou
$\alpha_j-\alpha_i$.  Comme le montrent les estimations du paragraphe
\S\ref{subsec:approx:Hermite}, leur composante $\cC_{\un,v}$
contient les points $a_{\un-\ue_1},\dots,a_{\un-\ue_s}$
pour toute place archim\'edienne $v$ de $K$.  On va montrer au
paragraphe suivant que cela vaut en fait pour toutes les
places de $K$, d'o\`u la premi\`ere assertion de l'\'enonc\'e
suivant.

\begin{theoreme}
\label{res:principal}
Soit $\un=(n_1,\dots,n_s)\in\bN_+^s$. Alors le convexe ad\'elique
$\cC_\un$ contient les points $a_{\un-\ue_1},\dots,a_{\un-\ue_s}$.
De plus, en posant $N=n_1+\cdots+n_s$,
on a les estimations suivantes.
\begin{itemize}
\item[(i)] Si $v\mid \infty$, alors
 \[
   (s!)^{-1} |\Delta_\un|_v
    \le \mu_v(\cC_{\un,v})^{1/d_v}
    \le c_v N^{2s-2} |\Delta_\un|_v
 \]
 pour une constante $c_v>0$ qui ne d\'epend que
 $\alpha_1,\dots,\alpha_s$ et de $v$.
\smallskip
\item[(ii)] Si $v\in E$ et si $v\mid p$ pour un nombre premier $p$, alors
 \[
   |\Delta_\un|_v\le \mu_v(\cC_{\un,v})^{1/d_v} \le (p^3N)^s |\Delta_\un|_v.
 \]
\item[(iii)] Si $v\notin E$, alors $\mu_v(\cC_{\un,v})^{1/d_v}=|\Delta_\un|_v$.
\end{itemize}
\end{theoreme}

On remarquera que, pour toutes les places $v$ de $K$, ces
estimations enferment le volume de $\cC_{\un,v}$ entre deux bornes
dont le rapport est un polyn\^ome en $N$ alors que les bornes
elles m\^emes croissent comme $|\Delta_\un|_v$ c'est-\`a-dire
en gros comme une exponentielle en $N$ si $v\nmid\infty$ et m\^eme
comme $N!$ si $v\mid \infty$.  Pour $v\mid \infty$, on donne une valeur
explicite de la constante $c_v$ au th\'eor\`eme \ref{volarch:thm}.

Les bornes inf\'erieures pour $\mu_v(\cC_{\un,v})$ d\'ecoulent
ais\'ement de la d\'efinition de $\Delta_\un$ donn\'ee
au th\'eor\`eme \ref{res:thm:Mahler},
si on accepte le fait que $\cC_{\un,v}$ contient les
points $\ua_{\un-\ue_i}$ pour $i=1,\dots,s$.  En effet, soit
$T\colon K_v^s\to K_v^s$ l'application $K_v$-lin\'eaire donn\'ee
par
\[
 T(x_1,\dots,x_s)=x_1\ua_{\un-\ue_1}+\cdots+x_s\ua_{\un-\ue_s}
\]
pour tout $(x_1,\dots,x_s)\in K_v^s$.  Alors
$\cC_{\un,v}$ contient $T(\cE_v)$ o\`u $\cE_v$ est donn\'e par
\begin{align*}
  \cE_v &=\{(x_1,\dots,x_s)\in K_v^s\,;\,|x_1|_v+\cdots+|x_s|_v\le 1\}
  &&\text{si $v\mid \infty$,}\\
  \cE_v &=\cO_v^s
  &&\text{si $v\nmid\infty$.}
\end{align*}
Comme $|\det T|_v=|\Delta_\un|_v$,
on a $\mu_v(T(\cE_v)) =|\Delta_\un|_v^{d_v}\mu_v(\cE_v)$.  Si $v\mid \infty$,
on a aussi $\mu_v(\cE_v)\ge (s!)^{-d_v}$, donc $\mu_v(\cC_{\un,v})^{1/d_v}
\ge (s!)^{-1}|\Delta_\un|_v$.  Si $v\nmid\infty$, on a simplement
$\mu_v(\cE_v)=1$, donc $\mu_v(\cC_{\un,v})^{1/d_v} \ge |\Delta_\un|_v$.

L'essentiel du travail concerne donc les majorations de volume
des $\cC_{\un,v}$.  On en donne un aper\c{c}u plus bas.  Elles
conduisent \`a une majoration du volume de $\cC_\un$ dont on
tire la conclusion suivante gr\^ace au th\'eor\`eme de Minkowski
ad\'elique.

\begin{cor}
\label{res:cor}
Avec les notations du th\'eor\`eme \ref{res:principal}, on a
\[
 cN^{-g}\le \lambda_1(\cC_\un)\le\cdots\le\lambda_s(\cC_\un)\le 1
 \quad
 \text{avec}\quad
 g=s-2+s\sum_{v\in E} \frac{d_v}{d},
\]
o\`u $c>0$ est une constante qui ne d\'epend que de $\alpha_1,\dots,\alpha_s$.
\end{cor}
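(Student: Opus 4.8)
The plan is to combine the two assertions of Théorème~\ref{res:principal} with the adelic Minkowski theorem (Théorème~\ref{res:thm:MBV}): the containment statement, together with Théorème~\ref{res:thm:Mahler}, yields the upper bound $\lambda_s(\cC_\un)\le 1$, while the volume estimates, fed into the lower bound of Minkowski's theorem, yield the lower bound on $\lambda_1(\cC_\un)$.

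The upper bound is immediate. By the first assertion of Théorème~\ref{res:principal}, the convex body $\cC_\un=1\cdot\cC_\un$ contains the $s$ points $a_{\un-\ue_1},\dots,a_{\un-\ue_s}$, and these are linearly independent over $K$ by Théorème~\ref{res:thm:Mahler}, since $\Delta_\un\neq 0$. Hence $\lambda_s(\cC_\un)\le 1$, and $\lambda_1(\cC_\un)\le\cdots\le\lambda_s(\cC_\un)$ holds by definition of the successive minima.

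Next I would bound $\mu(\cC_\un)=\prod_v\mu_v(\cC_{\un,v})$ from above using Théorème~\ref{res:principal}. By part~(iii), for $v\notin E$ one has $\mu_v(\cC_{\un,v})=|\Delta_\un|_v^{d_v}$, and moreover $\cC_{\un,v}=\cO_v^s$ for all but finitely many such $v$ (those not dividing a rational prime $\le N$), so $\cC_\un$ is indeed an adelic convex body and Théorème~\ref{res:thm:MBV} applies. By parts~(i) and~(ii), $\mu_v(\cC_{\un,v})\le (c_vN^{2s-2})^{d_v}|\Delta_\un|_v^{d_v}$ for $v\mid\infty$ and $\mu_v(\cC_{\un,v})\le (p^3N)^{sd_v}|\Delta_\un|_v^{d_v}$ for $v\in E$ with $v\mid p$. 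Since $\Delta_\un\in K^\times$, the product formula gives $\prod_v|\Delta_\un|_v^{d_v}=1$; multiplying the three families of bounds and using $\sum_{v\mid\infty}d_v=d$ and $\sum_{v\in E,\,v\nmid\infty}d_v=\sum_{v\in E}d_v-d$ then yields
\[
 \mu(\cC_\un)\le c_1\,N^{(2s-2)d+s(\sum_{v\in E}d_v-d)}=c_1\,N^{d\,g},
 \qquad g=s-2+s\sum_{v\in E}\frac{d_v}{d},
\]
with $c_1=\prod_{v\mid\infty}c_v^{d_v}\prod_{v\in E,\,v\nmid\infty}(p^3)^{sd_v}$ depending only on $\alpha_1,\dots,\alpha_s$ and $K$. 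Inserting this into the lower bound of Théorème~\ref{res:thm:MBV},
\[
 \bigl(\lambda_1(\cC_\un)\cdots\lambda_s(\cC_\un)\bigr)^d
 \ge \frac{2^{sr_1}(s!)^{-d}}{\mu(\cC_\un)}\ge c_2\,N^{-d\,g},
\]
so $\lambda_1(\cC_\un)\cdots\lambda_s(\cC_\un)\ge c\,N^{-g}$; since each $\lambda_i(\cC_\un)\le\lambda_s(\cC_\un)\le 1$, the product is at most $\lambda_1(\cC_\un)$, whence $\lambda_1(\cC_\un)\ge c\,N^{-g}$, completing the argument.

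I do not expect a genuine obstacle at this stage: the corollary is essentially bookkeeping once Théorème~\ref{res:principal} is in hand, the delicate analytic work being the volume upper bounds there rather than their assembly here. The only points requiring a little care are checking condition~(iii) in the definition of an adelic convex body for $\cC_\un$, so that Théorème~\ref{res:thm:MBV} is legitimately applicable, and tracking the exponents so that the product formula cancels all the $|\Delta_\un|_v$ and the $N$-exponent collapses to exactly $d\,g$.
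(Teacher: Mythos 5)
Your proposal is correct and follows the paper's own argument essentially verbatim: the containment of the points $a_{\un-\ue_1},\dots,a_{\un-\ue_s}$ plus Mahler's nonvanishing of $\Delta_\un$ gives $\lambda_s(\cC_\un)\le 1$, and the product of the local volume bounds of Théorème \ref{res:principal}, combined with the product formula $\prod_v|\Delta_\un|_v^{d_v}=1$ and the lower bound of Théorème \ref{res:thm:MBV}, gives $\lambda_1(\cC_\un)\ge cN^{-g}$ exactly as in the paper. Your extra remark verifying condition (iii) of the adelic-convex definition (so that Minkowski's theorem applies) is a sound point of care that the paper leaves implicit.
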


\begin{proof}[D\'emonstration]
Comme $\prod_v |\Delta_\un|_v^{d_v} = 1$ et que $E$ contient toutes
les places archim\'ediennes de $K$, on trouve
\[
 \mu(\cC_n)
  = \prod_v \mu_v(\cC_{\un,v})
  \le \prod_{v\mid \infty} \Big(c_v^{d_v}N^{(2s-2)d_v}\Big)
      \prod_{v\in E,\,v\mid p} (p^3N)^{sd_v}
  = c_1^d N^{gd}
\]
o\`u $c_1>0$ est une constante ind\'ependante de $\un$.
Comme $\cC_\un$ contient les points $a_{\un-\ue_1},\dots,a_{\un-\ue_s}$
de $K^s$ et qu'en vertu du th\'eor\`eme \ref{res:thm:Mahler} ceux-ci
sont lin\'eairement ind\'ependants sur $K$, on a aussi
\[
  \lambda_1(\cC_\un)\le\cdots\le\lambda_s(\cC_\un)\le 1.
\]
Alors en vertu du th\'eor\`eme \ref{res:thm:MBV}, on a
\[
 (s!)^{-1}
  \le \lambda_1(\cC_\un)\cdots\lambda_s(\cC_\un)\mu(\cC_\un)^{1/d}
  \le \lambda_1(\cC_\un)c_1N^g,
\]
donc $\lambda_1(\cC_\un)\ge cN^{-g}$ avec $c=1/(c_1s!)$.
\end{proof}

La preuve du th\'eor\`eme \ref{res:principal} utilise des
r\'esultats g\'en\'eraux sur les polyn\^omes $f(z)\in\bC[z]$
\`a coefficients complexes que nous n'avons pas trouv\'es
dans la litt\'erature.  Supposons $f$ de degr\'e $N\ge 1$.
Soit $A$ l'ensemble de ses racines dans $\bC$ et soit $B$ l'ensemble
des racines de sa d\'eriv\'ee $f'$ qui ne sont pas dans $A$.
Au paragraphe \ref{sec:descente}, on consid\`ere les chemins
de descente maximale pour $|f|$ issus d'un point quelconque
$\beta$ de $\bC$.  Ceux-ci terminent n\'ecessairement
sur un \'el\'ement de $A$. On montre qu'ils sont contenus
dans l'enveloppe convexe de $A\cup\{\beta\}$, de longueur
au plus $\pi RN$ o\`u $R$ est le rayon d'un disque qui
contient $A\cup\{\beta\}$.  Au paragraphe \ref{sec:arch},
pour chaque $\beta\in B$, on note $m(\beta)$ la multiplicit\'e
de $\beta$ comme racine de $f'$ et, partant du point $\beta$,
on choisit $m(\beta)+1$ chemins de descente pour $|f|$ qui sont
localement distincts au voisinage de $\beta$.  Ces chemins
dessinent un graphe sur $A\cup B$ et on montre que ce graphe
est en fait un arbre.  On en extrait un sous-graphe $G$ sur $A$
qui est aussi un arbre avec ses ar\^etes index\'ees par $B$.
Pour chaque ar\^ete de $G$ d'extr\'emit\'es $\alpha,\alpha'
\in A$,  associ\'ee \`a un point $\beta\in B$, on dispose
d'un chemin qui joint $\alpha$ \`a $\alpha'$ en passant par
$\beta$, de longueur contr\^ol\'ee, le long duquel $|f|$ est
maximal au point $\beta$.

Pour la preuve du th\'eor\`eme \ref{res:principal} (i), on
peut supposer que la place $v\mid \infty$ donn\'ee provient
d'une inclusion $K\subset \bC$ et on applique la construction
d\'ecrite ci-dessus en prenant pour $f$ le pgcd des
polyn\^omes
$f_{\un-\ue_1},\dots,f_{\un-\ue_s}$ pour le
choix de $\un\in\bN_+^s$ donn\'e.  Si les coordonn\'ees de
$\un$ sont toutes $\ge 2$, on obtient ainsi un arbre $G$ sur
$A=\{\alpha_1,\dots,\alpha_s\}$.  Alors, pour chaque ar\^ete de
$G$ d'extr\'emit\'es $\alpha_i,\alpha_j$, on peut majorer
les int\'egrales qui apparaissent dans \eqref{res:eq:C_arch}
en fonction de $|f(\beta)|$ o\`u $\beta\notin A$ est le z\'ero
de $f'$ associ\'e \`a l'ar\^ete.  On en d\'eduit au
paragraphe \ref{sec:volarch} une majoration du
volume du convexe $\cC_{\un,v}$ en fonction du produit
des $|f(\beta)|^{m(\beta)}$, lequel est le
semi-r\'esultant de $f$ et de $f'$.  La majoration annonc\'ee
pour $\mu_v(\cC_{\un,v})$ s'ensuit gr\^ace au calcul de ce
semi-r\'esultant (paragraphe \ref{sec:semi-resultant}).  Le
cas g\'en\'eral o\`u au moins une coordonn\'ee de $\un$ est
\'egale \`a $1$ demande un petit ajustement.

Le traitement des places ultram\'etriques $v\nmid\infty$ est
plus simple.  Au paragraphe \ref{sec:estultra},
on montre que $\cC_{\un,v}$ contient les points
$\ua_{\un-\ue_1},\dots,\ua_{\un-\ue_s}$.  Plus loin,
au paragraphe \ref{sec:ultra}, on cons\-truit une for\^et sur
$\{\alpha_1,\dots,\alpha_s\}$ associ\'ee \`a la place $v$.
Ce graphe permet de s\'electionner $s$ in\'egalit\'es parmi
\eqref{res:eq:ultra1} et \eqref{res:eq:ultra2}.  Au
paragraphe \ref{sec:volultra}, on en d\'eduit la majoration
annonc\'ee pour le volume de $\cC_{\un,v}$.
Les notions utiles de th\'eorie des graphes sont rappel\'ees
au paragraphe \ref{sec:graphes}.

Au paragraphe \ref{sec:dexp}, on revisite notre r\'esultat principal
pour les approximations ``diagonales'' de deux exponentielles,
c'est-\`a-dire pour $s=2$ et $n_1=n_2$.  On en donne
une forme plus fine dont la preuve utilise seulement les
estimations des paragraphes \ref{subsec:approx:Hermite} et \ref{sec:estultra}.
On s'en sert ensuite ensuite pour d\'emontrer les propositions
\ref{intro:prop:imaginaire} et \ref{intro:prop:e3} de l'introduction.

Pour conclure, on explique au paragraphe \ref{sec:num} comment
les formules de r\'ecurrence d'Hermite rappel\'ees dans
l'appendice \ref{sec:rel} permettent de calculer efficacement
les quotients partiels du d\'eveloppement en fraction continue
de $e^3$.  Cela permet de v\'erifier les in\'egalit\'es
\eqref{intro:eq:loglog} en moins de deux heures de
calculs sur un petit ordinateur.

%
%

\section{Estimations ultram\'etriques}
\label{sec:estultra}

Soit $v$ une place de $K$ au-dessus d'un nombre
premier $p$.  Le but de ce paragraphe est de
compl\'eter la preuve de la premi\`ere assertion du
th\'eor\`eme \ref{res:principal} en montrant
que la composante $\cC_{\un,v}$ de $\cC_\un$ contient les points
$\ua_{\un-\ue_1},\dots,\ua_{\un-\ue_s}$ pour tout $\un\in\bN_+^s$.
Pour y parvenir, on utilise les notations et r\'esultats
suivants.

Pour tout $a\in\bC_p$ et tout $r>0$, on note
\[
 B(a,r)=\{z\in\bC_p\,;\, |z-a|_p\le r\}
\]
le disque ferm\'e de $\bC_p$ de centre $a$ et de rayon $r$
(\`a la fois un ferm\'e et un ouvert de $\bC_p$).  Pour un tel disque
$B=B(a,r)$ et pour toute fonction analytique $g\colon B\to \bC_p$,
on pose
\[
 |g|_B=\sup\{|g(z)|_p\,;\, z\in B\}.
\]
Cette quantit\'e s'exprime aussi en fonction du d\'eveloppement en
s\'erie de Taylor de $g$ autour du point $a$ via la formule
\[
 |g|_B=\sup_{k\in\bN} \left|\frac{g^{(k)}(a)}{k!}\right|_p r^k,
\]
dont on tire la forme $p$-adique des in\'egalit\'es de Cauchy
\[
 |g^{(k)}(a)|_p \le |k!|_p r^{-k} |g|_B
 \quad
 (k\in\bN)
\]
(voir \cite[\S1.5]{Ro1978}).  Pour les calculs, on utilise aussi les
estimations
\begin{equation}
 \label{estultra:eq:delta}
 \delta^k\le |k!|_p \le k\delta^{k-p} \le p^2k\delta^k
 \quad
 (k\in \bN),
 \quad
 \text{o\`u}
 \quad
 \delta=p^{-1/(p-1)},
\end{equation}
qui d\'ecoulent de la formule $|k!|_p=p^{-m}$ o\`u
$m=\sum_{\ell=1}^\infty \lfloor k/p^\ell \rfloor$.

\begin{lemme}
 \label{estultra:lemme1}
Soit $\un=(n_1,\dots,n_s)\in\bN^s$, soit $N=n_1+\cdots+n_s$,
et soient $i,j\in\{1,\dots,s\}$.  Alors, on a
\begin{equation}
 \label{estultra:lemme1:eq1}
 |P_\un(\alpha_i)|_v
  \le p^2 N \prod_{k=1}^s \max\{|\alpha_i-\alpha_k|_v,\,\delta\}^{n_k}.
\end{equation}
Si $|\alpha_i-\alpha_k|_v\le 1$ pour $k=1,\dots,s$, on a aussi
\begin{equation}
 \label{estultra:lemme1:eq1bis}
 |P_\un(\alpha_i)|_v
  \le |n_i!|_v.
\end{equation}
Enfin, si $\rho=|\alpha_i-\alpha_j|_v$ satisfait $0<\rho<\delta$,
on a
\begin{equation}
 \label{estultra:lemme1:eq2}
 |P_\un(\alpha_i)e^{\alpha_j-\alpha_i}-P_\un(\alpha_j)|_v
  \le \frac{\rho}{\delta} p^2  N \prod_{k=1}^s
      \max\{|\alpha_i-\alpha_k|_v,\,|\alpha_j-\alpha_k|_v\}^{n_k}.
\end{equation}
\end{lemme}

\begin{proof}[D\'emonstration]
Pour simplifier, on peut supposer $K\subset \bC_p$ et
$|\alpha|_v=|\alpha|_p$ pour tout $\alpha\in K$.  Alors, on peut
voir le polyn\^ome $f_\un(z)\in K[z]$ comme une fonction analytique
$f_\un\colon\bC_p\to\bC_p$.  Pour estimer $|P_\un(\alpha_i)|_v
= |P_\un(\alpha_i)|_p$, on pose
\[
 B=B(\alpha_i,\delta)
 \et
 M=|f_\un|_B.
\]
Pour $k=0,1,\dots,N$, les in\'egalit\'es de Cauchy jointes \`a
\eqref{estultra:eq:delta} livrent
\[
 |f_\un^{(k)}(\alpha_i)|_p
  \le |k!|_p \delta^{-k} M
  \le p^2 k M
  \le p^2 N M,
\]
donc
\[
 |P_\un(\alpha_i)|_v
  = \left| \sum_{k=0}^N f_\un^{(k)}(\alpha_i)\right|_p
  \le p^2 N M.
\]
On en d\'eduit l'estimation \eqref{estultra:lemme1:eq1} car
\[
 M\le \prod_{k=1}^s \sup\{|z-\alpha_k|_p\,;\,z\in B\}^{n_k}
    = \prod_{k=1}^s \max\{|\alpha_i-\alpha_k|_v,\,\delta\}^{n_k}.
\]
Si $|\alpha_i-\alpha_k|_v\le 1$ pour tout $k$,
le m\^eme calcul fournit $|f_\un|_B\le 1$ en
prenant $B=B(\alpha_i,1)$.  Alors les in\'egalit\'es de Cauchy
donnent $|f_\un^{(k)}(\alpha_i)|_p\le |k!|_p$ pour tout $k\in\bN$.
Comme on a $f_\un^{(k)}(\alpha_i)=0$ pour $k=0,\dots,n_i-1$, on en
d\'eduit $|f_\un^{(k)}(\alpha_i)|_v\le |n_i!|_v$ pour tout $k\in\bN$
et l'estimation \eqref{estultra:lemme1:eq1bis} s'ensuit.

Supposons enfin que $0< \rho = |\alpha_i-\alpha_j|_p < \delta$.  Pour
d\'emontrer \eqref{estultra:lemme1:eq2}, on utilise cette fois
\[
 B = B(\alpha_j,\rho)
 \et
 M=|f_\un|_B.
\]
Comme $\rho<\delta$, la fonction $g\colon B\to \bC_p$ donn\'ee par
\[
 g(z)=P_\un(z)e^{\alpha_j-z}-P_\un(\alpha_j)
 \quad (z\in B)
\]
est analytique avec $ g(\alpha_j)=0$ et
\begin{equation}
 \label{estultra:lemme1:eq3}
 g'(z)=-f_\un(z)e^{\alpha_j-z}
 \quad (z\in B).
\end{equation}
Pour tout entier $\ell=0,1,\dots,N$, on a
\[
 |f_\un^{(\ell)}(\alpha_j)|_p
  \le |\ell!|_p \rho^{-\ell} M
  \le p^2 \ell (\delta/\rho)^\ell M
  \le p^2 N (\delta/\rho)^\ell M.
\]
Comme $\disp f_\un^{(\ell)}=0$ pour $\ell>N$, cette estimation s'\'etend
\`a tout $\ell\in\bN$. Alors, gr\^ace \`a \eqref{estultra:lemme1:eq3},
la formule de Leibniz pour la d\'eriv\'ee d'un produit livre,
pour tout entier $k\ge 1$,
\[
 |g^{(k)}(\alpha_j)|_p
  \le \max_{0\le \ell <k} |f_\un^{(\ell)}(\alpha_j)|_p
  \le p^2 N (\delta/\rho)^{k-1} M.
\]
Comme $\alpha_i\in B$ et que $g(\alpha_j)=0$, on en d\'eduit
\[
 |P_\un(\alpha_i)e^{\alpha_j-\alpha_i}-P_\un(\alpha_j)|_v
  =|g(\alpha_i)|_p
  \le |g|_B
   =  \sup_{k\ge 1} \left|\frac{g^{(k)}(\alpha_j)}{k!}\right|_p
       \rho^k
  \le p^2 N (\rho/\delta) M.
\]
L'estimation \eqref{estultra:lemme1:eq2} s'ensuit car
\[
 M\le \prod_{k=1}^s \sup\{|z-\alpha_k|_p\,;\,z\in B\}^{n_k}
    = \prod_{k=1}^s \max\{|\alpha_i-\alpha_k|_v,\,|\alpha_j-\alpha_k|_v\}^{n_k}.
\qedhere
\]
\end{proof}

\begin{theoreme}
Soit $\un=(n_1,\dots,n_s)\in\bN_+^s$.  Alors
le sous-$\cO_v$-module $\cC_{\un,v}$ de $K_v^s$ d\'efini
au paragraphe \ref{subsec:res} contient les
points $\ua_{\un-\ue_1},\dots,\ua_{\un-\ue_s}$.
\end{theoreme}

\begin{proof}[D\'emonstration]
Fixons un entier $\ell\in\{1,\dots,s\}$ quelconque et posons
$P=P_{\un-\ue_\ell}$.  Pour montrer que $\cC_{\un,v}$ contient
le point $\ua_{\un-\ue_\ell}= (P(\alpha_1),\dots,P(\alpha_s))$,
on choisit $i,j\in\{1,\dots,s\}$ quelconques.  Comme
$\max\{|\alpha_i-\alpha_\ell|_v,\,\delta\}\ge \delta\ge 1/p$,
l'in\'egalit\'e \eqref{estultra:lemme1:eq1} du
lemme \ref{estultra:lemme1} livre
\[
 \big|P(\alpha_i)\big|_v
 \le p^2(N-1) \frac{1}{\delta} \prod_{k=1}^s
     \max\{|\alpha_i-\alpha_k|_v,\,\delta\}^{n_k}
 \le p^3 N \prod_{k=1}^s \max\{|\alpha_i-\alpha_k|_v,\,\delta\}^{n_k}.
\]
Si $|\alpha_i-\alpha_k|_v=1$ pour tout $k=1,\dots,s$ avec $k\neq i$,
l'in\'egalit\'e \eqref{estultra:lemme1:eq1bis} du m\^eme lemme donne
aussi
\[
 \big|P(\alpha_i)\big|_v\le |(n_i-1)!|_v.
\]
Enfin, si $\rho=|\alpha_j-\alpha_i|_v$ satisfait $0<\rho<\delta$, alors,
comme $\max\{|\alpha_i-\alpha_\ell|_v,\,|\alpha_j-\alpha_\ell|_v\}
\ge \rho$, l'in\'egalit\'e \eqref{estultra:lemme1:eq2} du lemme livre
\begin{align*}
 |P(\alpha_i)e^{\alpha_j-\alpha_i}-P(\alpha_j)|_v
  &\le \frac{\rho}{\delta} p^2  (N-1)\cdot \frac{1}{\rho} \prod_{k=1}^s
      \max\{|\alpha_i-\alpha_k|_v,\,|\alpha_j-\alpha_k|_v\}^{n_k}\\
  &\le  p^3 N \prod_{k=1}^s
      \max\{|\alpha_i-\alpha_k|_v,\,|\alpha_j-\alpha_k|_v\}^{n_k}.
\qedhere
\end{align*}
\end{proof}

%
%

\section{Pr\'eliminaires de th\'eorie des graphes}
\label{sec:graphes}

Un \emph{graphe} $\cG$ est un couple d'ensembles finis $(V,E)$
o\`u $E$ consiste de sous-ensembles de $V$ avec deux \'el\'ements.
Les \'el\'ements de $V$ sont appel\'es les \emph{sommets}
de $\cG$ et ceux de $E$ les \emph{ar\^etes} de $\cG$ en accord
avec la repr\'esentation graphique usuelle.

Soit $\cG=(V,E)$ un graphe.  Une \emph{cha\^{\i}ne \'el\'ementaire}
dans $\cG$ est une suite $(\alpha_1,\dots,\alpha_m)$ d'\'el\'ements
distincts de $V$ avec $m\ge 2$ telle que
$\{\alpha_i,\alpha_{i+1}\}\in E$ pour $i=1,\dots,m-1$.  On dit
que $\cG$ est \emph{connexe} si, pour toute paire d'\'el\'ements
distincts $\alpha,\beta$ de $V$, il existe au moins une
cha\^{\i}ne \'el\'ementaire $(\alpha_1,\dots,\alpha_m)$
dans $\cG$ avec $\alpha_1=\alpha$ et $\alpha_m=\beta$.
On dit que $\cG$ est un \emph{arbre} s'il existe exactement
une telle cha\^{\i}ne pour chaque choix de $\alpha,\beta\in V$
avec $\alpha\neq \beta$.
Lorsque $\cG$ est connexe, on a $|V|\le |E|+1$ avec \'egalit\'e
si et seulement si $\cG$ est un arbre.

En g\'en\'eral, pour un graphe $\cG=(V,E)$, il existe un et
un seul choix d'entier $r\ge 1$ et de partitions
$V = V_1\cup\cdots\cup V_r$ et $E = E_1\cup\cdots\cup E_r$
de $V$ et $E$ en $r$ sous-ensembles disjoints tel que
$\cG_i=(V_i,E_i)$ soit un graphe connexe pour $i=1,\dots,r$.
On dit que $\cG_1,\dots,\cG_r$ sont les \emph{composantes
connexes} de $\cG$.  Si celles-ci sont des arbres, on dit
que $\cG$ est une \emph{for\^et}.
Lorsque $\cG$ poss\`ede $r$ composantes connexes, on a
$|V|\le |E|+r$ avec \'egalit\'e si et seulement si $\cG$
est une for\^et.

Une \emph{for\^et enracin\'ee} est un triplet $\cG=(R,V,E)$
o\`u $(V,E)$ est une for\^et et o\`u $R$ est un sous-ensemble
de $V$ qui contient exactement un sommet de chacune des
composantes connexes de $(V,E)$.
On dit que $R$ est l'ensemble des \emph{racines}
de $\cG$.  Alors, pour tout $\beta\in V\setminus R$, il existe
une et une seule cha\^{\i}ne \'el\'ementaire
$(\alpha_1,\dots,\alpha_m)$ avec $\alpha_1\in R$ et
$\alpha_m=\beta$.  Cela permet de d\'efinir un ordre partiel
sur $V$ en posant $\alpha<\beta$ si $\beta\notin R\cup\{\alpha\}$
et si la cha\^{\i}ne qui lie $\beta$ \`a un \'el\'ement de $R$
contient $\alpha$.  En particulier, toute ar\^ete $\{\alpha,\beta\}$
de $G$ peut \^etre ordonn\'ee de sorte que $\alpha<\beta$.
Les couples $(\alpha,\beta)$ ainsi form\'es sont appel\'es
les \emph{arcs} de $G$. Pour $\alpha\in V$ fix\'e, on dit que
$D_\cG(\alpha)=\{\beta\in V\,;\, \alpha<\beta\}$ est l'ensemble
des \emph{descendants} de $\alpha$. L'ensemble $S_\cG(\alpha)$
des \'el\'ements minimaux de $D_\cG(\alpha)$ est appel\'e
l'ensemble des \emph{successeurs} de $\alpha$.  Les couples
$(\alpha,\beta)\in V\times V$ avec
$\beta\in S_\cG(\alpha)$ sont donc les arcs de $G$.
De plus, tout $\beta\in V\setminus R$ est le successeur d'un et un
seul $\alpha\in V$.  Cela permet de formuler le r\'esultat suivant.

\begin{proposition}
\label{graphes:prop}
Soit $\cG=(R,V,E)$ une for\^et enracin\'ee, soit $K$ un corps,
soit $(x_\alpha)_{\alpha\in V}$ une famille d'ind\'etermin\'ees
sur $K$ index\'ee par $V$, et soit $\varphi\colon E\to K$ une
fonction.  Pour tout $\beta\in V$, on pose
\[
 L_\beta
 = \begin{cases}
   x_\beta
     &\text{si $\beta\in R$,}\\
   x_\beta-\varphi(\{\alpha,\beta\})x_\alpha
     &\text{si $\beta\in S_\cG(\alpha)$ avec $\alpha\in V$.}
   \end{cases}
\]
Alors, en \'etendant l'ordre partiel sur $V$ \`a un ordre
total, la matrice des formes lin\'eaires $(L_\beta)_{\beta\in V}$
dans la base $(x_\alpha)_{\alpha\in V}$ est triangulaire
inf\'erieure avec $1$ partout sur la diagonale.
\end{proposition}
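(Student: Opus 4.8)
The statement is essentially a bookkeeping fact about rooted forests, and I would prove it by inducting on $|V|$ while tracking how the linear forms $L_\beta$ express in the basis $(x_\alpha)_{\alpha\in V}$. The key observation is that each $L_\beta$ involves only $x_\beta$ (with coefficient $1$) and possibly $x_\alpha$ where $\alpha$ is the immediate predecessor of $\beta$, i.e. the unique vertex with $\beta\in S_\cG(\alpha)$. So if I order $V$ compatibly with the partial order $<$ — meaning $\alpha<\beta$ implies $\alpha$ comes before $\beta$ — then in the row indexed by $\beta$, the entry in column $\beta$ is $1$, and every other nonzero entry sits in a column $\alpha$ with $\alpha<\beta$, hence strictly to the left. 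That is exactly lower-triangularity with ones on the diagonal.

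\medskip

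\noindent First I would fix a total order $\preceq$ on $V$ extending the partial order $<$ (such an extension always exists since $<$ is a strict partial order on a finite set). Writing $V=\{\beta_1\prec\beta_2\prec\cdots\prec\beta_m\}$, I form the matrix $M=(M_{k\ell})$ defined by $L_{\beta_k}=\sum_{\ell=1}^m M_{k\ell}\,x_{\beta_\ell}$. I must check two things: that $M_{kk}=1$ for all $k$, and that $M_{k\ell}=0$ whenever $\ell>k$. For the diagonal: if $\beta_k\in R$ then $L_{\beta_k}=x_{\beta_k}$, so $M_{kk}=1$ and all other $M_{k\ell}=0$; if $\beta_k\in S_\cG(\alpha)$ then $L_{\beta_k}=x_{\beta_k}-\varphi(\{\alpha,\beta_k\})x_\alpha$, so again the coefficient of $x_{\beta_k}$ is $1$, giving $M_{kk}=1$. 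For the vanishing above the diagonal: in the first case there is nothing to prove, and in the second case the only index $\ell$ with $\ell\ne k$ and $M_{k\ell}\ne 0$ is the one with $\beta_\ell=\alpha$; but $\beta_\ell=\alpha<\beta_k$ forces $\ell\prec k$, i.e.\ $\ell<k$, so this nonzero entry lies strictly below the diagonal. Hence $M$ is lower triangular with ones on the diagonal, as claimed.

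\medskip

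\noindent The one point requiring a little care — and the main (mild) obstacle — is justifying that the "immediate predecessor" is well defined and that $\alpha<\beta$ really does hold for the relevant arc. This is precisely the content of the sentences just before the proposition: every $\beta\in V\setminus R$ is the successor of exactly one $\alpha\in V$, and for that $\alpha$ one has $\alpha<\beta$ by definition of the partial order (the unique chain from a root to $\beta$ passes through $\alpha$). Once that is in hand, no further argument is needed; in particular the function $\varphi$ and the field $K$ play no role beyond supplying the coefficients, and the forest-versus-tree distinction is irrelevant (roots in different components are simply incomparable and their rows have a single nonzero entry on the diagonal). Since a lower-triangular matrix with ones on the diagonal is invertible, an immediate corollary — which is presumably how the proposition will be used — is that $(L_\beta)_{\beta\in V}$ is again a basis of the space of linear forms, with determinant $1$.
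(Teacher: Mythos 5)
Your argument is correct and is exactly the verification the paper has in mind (the paper in fact states the proposition without proof, regarding it as immediate from the preceding remarks that every $\beta\in V\setminus R$ is the successor of a unique $\alpha$ and that $\alpha<\beta$ for that arc). Note only that the induction on $|V|$ announced in your plan is never used — your direct check of the diagonal entries and of the position of the single off-diagonal entry already suffices, so you may simply drop that sentence.
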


%
%

\section{Chemins de croissance maximale}
\label{sec:descente}

Dans ce paragraphe, on fixe un polyn\^ome unitaire
non constant $f(z)\in\bC[z]$, un sous-ensemble convexe
compact $\cK$ de $\bC$ contenant toutes les racines de $f$,
et un disque ferm\'e $D$ de $\bC$ contenant $\cK$.
On note $N$ le degr\'e de $f$, et $R$ le rayon de $D$.
Le but principal de ce paragraphe est de d\'emontrer le
r\'esultat suivant.

\begin{theoreme}
\label{descente:thm}
Soit $\beta\in \cK$. Il existe une racine $\alpha$ de $f$ et un chemin
$\gamma\colon[0,1]\to\bC$ joignant $\gamma(0)=\alpha$ \`a $\gamma(1)=\beta$,
tel que $f(\gamma(t))=tf(\beta)$ pour tout $t\in[0,1]$.  L'image d'un tel chemin
est contenue dans $\cK$, de longueur au plus $\pi RN$.
\end{theoreme}

Par \emph{chemin}, on entend ici une fonction $\gamma\colon I\to\bC$
continue et diff\'eren\-tiable par morceaux sur un intervalle
ferm\'e $I$ de $\bR$. Pour un chemin $\gamma$ comme dans l'\'enonc\'e
du th\'eor\`eme, $\gamma(0)$ est forc\'ement une racine de
$f$ et on a $\max\{|f(\gamma(t))|\,;\,0\le t\le 1\}=|f(\beta)|$. En fait, on
va voir que $\gamma$ est un chemin de croissance maximale pour $|f|$.

Pour la preuve, on consid\`ere le polyn\^ome $f$ comme un rev\^etement
de surfaces de Riemann $f\colon\bC\to\bC$ de degr\'e $N$, ramifi\'e
en un nombre fini de points.  Alors tout chemin
$\gamma\colon[0,1]\to\bC$ se rel\`eve en $N$ chemins
$\gamma_1,\dots,\gamma_N\colon[0,1]\to\bC$ tels que
$f^{-1}(\gamma(t))=\{\gamma_1(t),\dots,\gamma_N(t)\}$
pour tout $t\in[0,1]$.  Ces derniers ne sont pas uniques en
g\'en\'eral, \`a cause de la ramificaton, et on les obtient
par recollement comme dans la preuve de
\cite[Theorem 4.14]{Fo1981}.  Pour un chemin $\gamma$
de la forme $\gamma(t)=tf(\beta)$
avec $f(\beta)\neq 0$, on en d\'eduit l'\'enonc\'e suivant.

\begin{lemme}
\label{descente:lemme:chemin}
Soit $\beta\in\bC$ avec $f(\beta)\neq 0$, et soit $m=m(\beta)\ge 0$
l'ordre de la d\'eriv\'ee de $f$ en $\beta$.
Alors, il existe $\delta\in(0,1)$ et $m+1$ chemins
$\gamma_0,\dots,\gamma_m$ de $[0,1]$ dans $\bC$ tels que
\begin{itemize}
 \item[(i)] $\gamma_0(1)=\cdots=\gamma_m(1)=\beta$,
 \item[(ii)] $f(\gamma_0(t))=\cdots=f(\gamma_m(t))=tf(\beta)$
   pour tout $t\in[0,1]$,
 \item[(iii)] $\gamma_0(t),\dots,\gamma_m(t)$ sont $m+1$
   nombres distincts pour chaque $t\in(1-\delta,1)$.
\end{itemize}
De plus, pour $j=0,1,\dots,m$ et pour chaque $t\in(0,1)$ tel que
$f'(\gamma_j(t))\neq 0$, la fonction $\gamma_j$ est
analytique en $t$ et sa d\'eriv\'ee $\gamma_j'(t)$ pointe dans
la direction o\`u la norme $|f|$ de $f$ cro\^{\i}t le
plus vite.
\end{lemme}

La derni\`ere affirmation du lemme signifie que $\gamma_0,\dots,
\gamma_m$ sont des chemins de croissance maximale pour la norme
de $f$.  En fait, c'est vrai pour tout chemin $\gamma$
tel que $f(\gamma(t))=ct$ ($0\le t\le 1$)
avec $c\in\bC\setminus\{0\}$ fix\'e car la courbe $t\mapsto ct$
d\'ecrit une demi-droite orthogonale aux cercles centr\'es \`a l'origine.
Comme l'application $f\colon\bC\to\bC$ est conforme en dehors
des points de ramification, la pr\'eimage $\gamma$ de cette
courbe est orthogonale aux courbes de niveau de $|f|$
en dehors de ces points. Nous revisiterons la construction
des $\gamma_j$ au lemme \ref{arch:lemme:regions}.

\subsection*{D\'emonstration du th\'eor\`eme \ref{descente:thm}}
Si $f(\beta)=0$, le chemin constant $\gamma(t)=\beta$ pour
tout $t\in [0,1]$ est le seul chemin possible et il poss\`ede
les popri\'et\'es requises.  Supposons donc que $f(\beta)\neq 0$.
Alors le lemme pr\'ec\'edant fournit un chemin
$\gamma\colon[0,1]\to\bC$ avec $\gamma(1)=\beta$ et
$f(\gamma(t))=tf(\beta)$ pour tout $t\in [0,1]$.  Fixons un
tel chemin.  Pour les calculs, on note
$\alpha_1,\dots,\alpha_s$ les racines distinctes de $f$
danc $\bC$ et $n_1,\dots,n_s$ leurs multiplicit\'es
respectives de sorte que
\begin{equation*}
 f(z)=(z-\alpha_1)^{n_1}\cdots(z-\alpha_s)^{n_s}.
 \label{descente:eq:f(z)}
\end{equation*}
On d\'esigne aussi par
$B$ l'ensemble des z\'eros de la d\'eriv\'ee $f'$ de $f$.

Le th\'eor\`eme de Gauss-Lucas nous apprend que $B$
est contenu dans l'enveloppe convexe des racines
de $f$, donc $B\subset \cK$.  Le fait que l'image de $\gamma$
soit contenue dans $\cK$ se d\'emontre de mani\`ere semblable.
En effet, supposons que ce ne soit pas le cas.  Alors,
comme $\cK$ est convexe, il existe un demi-plan qui contient
$\cK$ mais pas l'image de $\gamma$.  Autrement dit, il existe
$a,b\in\bC$ avec $|a|=1$ tels que $\Re(az+b)\le 0$ pour
tout $z\in \cK$ et que $\Re(a\gamma(t)+b)>0$ pour au moins un
$t\in [0,1]$.  Choisissons $t_0\in[0,1]$
tel que $\Re(a\gamma(t_0)+b)$ soit maximal, et
posons $z_0=\gamma(t_0)$.  Comme $\Re(az_0+b)>0$,
on a $z_0\notin\cK$, donc $t_0\in (0,1)$ et $z_0\notin B$.
Par suite $\gamma$ est diff\'erentiable en $t_0$ avec
$\Re(a\gamma'(t_0))=0$.  Par ailleurs, en d\'erivant
les deux membres de l'\'egalit\'e $f(\gamma(t))=tf(\beta)$
en $t=t_0$, on obtient
\[
 a\gamma'(t_0)
  = \frac{af(\beta)}{f'(\gamma(t_0))}
  = \frac{af(z_0)}{t_0f'(z_0)}
  = \left(\sum_{\ell=1}^s\frac{t_0 n_\ell}{a(z_0-\alpha_\ell)}\right)^{-1}.
\]
Comme $\Re(a(z_0-\alpha_\ell))=\Re(az_0+b)-\Re(a\alpha_\ell+b)
\ge \Re(az_0+b) >0$ pour $\ell=1,\dots,s$, on en d\'eduit que
$\Re(a\gamma'(t_0))>0$, une contradiction.

Pour estimer la longueur $L(\gamma)$ du chemin $\gamma$,
on utilise la formule de Cauchy-Crofton
\begin{align*}
 L(\gamma)
  =\frac{1}{4}
   \int_0^{2\pi}A(\theta)\,d\theta
 \quad
 &\text{o\`u}\quad
 A(\theta)=\int_{-\infty}^\infty N(r,\theta)\,dr\,,\\
 &\text{et}\quad
 N(r,\theta)
  =\card \{t\in[0,1]\,;\, \Re(\gamma(t)e^{-i\theta})=r\}
\end{align*}
(voir par exemple la jolie d\'emonstration de \cite{AD1997}).
Fixons $r,\theta\in\bR$ et consid\'erons le polyn\^ome
\[
 g_{r,\theta}(u)
  = \Im\left(\frac{f((r+iu)e^{i\theta})}{f(\beta)}\right)
  \in \bR[u].
\]
Si $t_0\in[0,1]$ satisfait $\Re(\gamma(t_0)e^{-i\theta})=r$,
on peut \'ecrire $\gamma(t_0)=(r+iu_0)e^{i\theta}$
avec $u_0\in\bR$.  On en d\'eduit $f((r+iu_0)e^{i\theta})=t_0f(\beta)$
et par suite $g_{r,\theta}(u_0)=0$.  Comme $\gamma$ est
injective sur $[0,1]$ (car $f\circ\gamma$ l'est), cela
signifie que $N(r,\theta)$ est au plus \'egal au nombre de racines
r\'eelles de $g_{r,\theta}$.  Or, comme $f$ est de
degr\'e $N$, le polyn\^ome $g_{r,\theta}(u)$ est de degr\'e
au plus $N$ et son coefficient de $u^N$ est
$\Im((ie^{i\theta})^N/f(\beta))$.  Donc, si on excepte
les $2N$ valeurs de $\theta\in[0,2\pi)$ pour lesquelles
ce coefficient est nul, on a n\'ecessairement
$g_{r,\theta}\neq 0$, donc $N(r,\theta)\le N$.

Pour $\theta$ fix\'e, l'ensemble $\{\Re(ze^{-i\theta})\,;\, z\in D\}$
est un intervalle $I_\theta$ de $\bR$ de longueur $2R$.   Comme l'image
de $\gamma$ est contenue dans $\cK\subset D$,
on a forc\'ement $N(r,\theta)=0$ si $r\notin I_\theta$. On en d\'eduit
que $A(\theta) \le 2R N$ sauf pour au plus $2N$ valeurs de
$\theta\in [0,2\pi)$, et par suite $L(\gamma)\le \pi R N$.

%
%

\section{Chemins entre racines complexes}
\label{sec:arch}

Comme au paragraphe pr\'ec\'edant, on fixe un polyn\^ome unitaire
non constant $f(z)\in\bC[z]$.  On note $N$ son degr\'e,
$A=\{\alpha_1,\dots,\alpha_s\}$ l'ensemble de ses racines
complexes, $\cK$ l'enveloppe convexe de $A$, et $R$ le
rayon d'un disque ferm\'e $D$ qui contienne $A$.
On note aussi $B=\{\beta_1,\dots,\beta_{p}\}$ l'ensemble
des racines de $f'(z)$ qui ne sont pas racines de $f(z)$,
c'est-\`a-dire l'ensemble des z\'eros de la d\'eriv\'ee
logarithmique $f'(z)/f(z)$.  Alors on peut \'ecrire
\begin{align}
 f(z)&=(z-\alpha_1)^{n_1}\cdots(z-\alpha_s)^{n_s},
 \label{arch:eq:f(t)}\\
 f'(z)&=N(z-\alpha_1)^{n_1-1}\cdots(z-\alpha_s)^{n_s-1}
        (z-\beta_1)^{m_1}\cdots(z-\beta_p)^{m_p},
 \label{arch:eq:f'(t)}
\end{align}
pour des entiers $n_1,\dots,n_s\ge 1$ de somme $N$,
et des entiers $m_1,\dots,m_p\ge 1$ de somme $s-1$.

Pour tout $\beta\in\bC$, on d\'esigne par $m(\beta)$ l'ordre
de $f'(z)$ en $\beta$.  Avec cette
notation, on a $m_j=m(\beta_j)$ pour $j=1,\dots,p$.
Le but de ce paragraphe est de montrer le r\'esultat suivant.

\begin{theoreme}
\label{arch:thm:graphe}
Il existe un arbre $\cG$ qui poss\`ede les propri\'et\'es suivantes:
\begin{itemize}
 \item[(i)] L'ensemble de ses sommets est $A$.
 \item[(ii)] Il poss\`ede $s-1$ ar\^etes, chacune index\'ee
      par un \'el\'ement de $B$.
 \item[(iii)] Pour chaque $\beta\in B$, il existe exactement
      $m(\beta)$ ar\^etes de $G$ index\'ees par $\beta$.
 \item[(iv)] Si $\{\alpha,\alpha'\}$ est une ar\^ete de $G$ index\'ee
      par $\beta$, il existe un chemin $\gamma\colon [0,1]\to \bC$
      de longueur au plus $2\pi R N$, contenu dans $\cK$, joignant
      $\gamma(0)=\alpha$ \`a $\gamma(1)=\alpha'$, tel que
     \[
      \gamma(1/2)=\beta
      \et
      \max_{0\le t\le 1} |f(\gamma(t)| = |f(\beta)|.
     \]
\end{itemize}
\end{theoreme}

Lorsque les racines de $f(z)$ sont toutes r\'eelles, on a
$f(z)\in\bR[z]$ et on peut donner une d\'emonstration tr\`es
simple du th\'eor\`eme.  Pour cela, on suppose d'abord
$\alpha_1<\cdots<\alpha_s$ list\'ees
par ordre croissant.  Alors, dans chaque intervalle
$[\alpha_j,\alpha_{j+1}]$ avec $1\le j\le s-1$, la fonction
$|f(z)|$ atteint son maximum en un z\'ero $\beta_j$ de $f'(z)$
avec $\alpha_j<\beta_j<\alpha_{j+1}$.  Comme $B$ est de
cardinalit\'e $p\le s-1$, cela \'epuise tous les
\'el\'ements de $B$: on a $p=s-1$ et $m_1=\cdots=m_{s-1}=1$.
On prend pour $\cG$ le graphe de sommets $A$, dont les ar\^etes
sont les paires $\{\alpha_j,\alpha_{j+1}\}$ index\'ees par
$\beta_j$ pour $j=1,\dots,s-1$.  Alors $\cG$ est un arbre et,
pour chaque $j=1,\dots,s-1$, le chemin $\gamma_j$ affine
par morceaux avec $\gamma_j(0)=\alpha_j$, $\gamma_j(1/2)=\beta_j$
et $\gamma_j(1)=\alpha_{j+1}$ poss\`ede la propri\'et\'e requise
en (iv); sa longueur est $\alpha_{j+1}-\alpha_j\le 2R$.

\subsection*{\'Etape 1}
La preuve du cas g\'en\'eral utilise plusieurs lemmes.
Pour tout $\beta\in B$, on choisit une fois pour toute
$m(\beta)+1$ chemins $\gamma_{\beta,0},\dots,\gamma_{\beta,m(\beta)}$
d'extr\'emit\'e $\beta$ comme au lemme \ref{descente:lemme:chemin}.
Alors on a $\gamma_{\beta,j}(0)\in A$ pour $j=0,\dots,m(\beta)$.
Notre objectif est de montrer que ces $m(\beta)+1$ points
de $A$ sont distincts et que le graphe $\cG$ de sommets
$\alpha_1,\dots,\alpha_s$ et d'ar\^etes
$\{\gamma_{\beta,0}(0),\gamma_{\beta,j}(0)\}$
avec $\beta\in B$ et $1\le j \le m(\beta)$ poss\`ede
les propri\'et\'es (i) \`a (iv) du th\'eor\`eme.  On commence par
\'etablir la propri\'et\'e (iv).

\begin{lemme}
\label{arch:lemme:longueur}
Soient $\beta\in B$ et $j\in\{1,\dots,m(\beta)\}$.  Alors
le chemin $\tgamma$ de $\gamma_{\beta,0}(0)$ \`a
$\gamma_{\beta,j}(0)$ donn\'e par
\[
 \tgamma(t)
  =\begin{cases}
   \gamma_{\beta,0}(2t) &\text{si $0\le t\le 1/2$,}\\
   \gamma_{\beta,j}(2-2t) &\text{si $1/2\le t\le 1$,}
   \end{cases}
\]
est contenu dans $\cK$, de longueur au
plus $2\pi R N$.  De plus, il satisfait
\[
 \tgamma(1/2)=\beta
 \et
 \max_{0\le t\le 1} |f(\tgamma(t)| = |f(\beta)|.
\]
\end{lemme}

\begin{proof}[D\'emonstration]
On a $B\subset \cK$ en vertu du th\'eor\`eme de Gauss-Lucas.
Alors, pour tout $\beta\in B$, le th\'eor\`eme \ref{descente:thm}
montre que les chemins $\gamma_{\beta,0}$ et $\gamma_{\beta,j}$
sont contenus dans $\cK$ de longueur
au plus $\pi RN$.  La conclusion suit car se sont
des chemins de croissance maximale pour $|f|$.
\end{proof}

\subsection*{\'Etape 2}
On d\'emontre d'abord le r\'esultat ci-dessous o\`u $S=\bC\cup\{\infty\}$
repr\'esente la sph\`ere de Riemann munie de sa topologie usuelle.
On l'utilise ensuite pour construire un arbre $\cH$ sur $A\cup B$.

\begin{lemme}
\label{arch:lemme:regions}
Soit $\beta\in B$ et soit $m=m(\beta)$.  Il existe $\delta>0$ et
$m+1$ fonctions continues $\gamma^+_0,\dots,\gamma^+_m$ de
$[1,\infty]$ dans $S=\bC\cup\{\infty\}$
telles que
\begin{itemize}
 \item[(i)] $\gamma^+_0(1)=\cdots=\gamma^+_m(1)=\beta$,
 \item[(ii)] $f(\gamma^+_0(t))=\cdots=f(\gamma^+_m(t))=tf(\beta)$
   pour tout $t\in[1,\infty]$,
 \item[(iii)] $\gamma^+_0(t),\dots,\gamma^+_m(t)$ sont $m+1$
   nombres distincts pour chaque $t\in(1,1+\delta)$.
\end{itemize}
Alors, les courbes $\Gamma^+_0=\gamma^+_0([1,\infty]),\dots,
\Gamma^+_m=\gamma^+_m([1,\infty])$ 
ne se rencontrent qu'aux points $\beta$ et $\infty$
sur $S$.  De plus, leur compl\'ement
$S\setminus(\Gamma^+_0\cup\cdots\cup\Gamma^+_m)$
est la r\'eunion de $m+1$ ouverts connexes disjoints
$\cR_0,\dots,\cR_m$ de $\bC$ tels que $\gamma_{\beta,j}([0,1))\subseteq
\cR_j$ pour $j=0,\dots,m$.
\end{lemme}

La preuve est illustr\'ee par la figure \ref{fig1}. Elle utilise le th\'eor\`eme
de Jordan sur les courbes ferm\'ees simples.

\begin{figure}[ht]
 \begin{tikzpicture}
    \clip(-5.5,-3) rectangle (5.5,3);
    \draw[fill] (0, 2) circle (0.05);
    \node[above] at (0,2.2) {$\infty$};
    \draw[->-] (0,-2) .. controls (5.7,-5) and (5.7,5) .. (0,2); 
    \draw[->-] (0,2) .. controls (4,3) and (4,-3) .. (0,-2); 
    \draw[->-] (0,-2) .. controls (2,-1) and (2,1) .. (0,2);
    \draw[->-] (0,2) .. controls (0.5,1) and (0.5,-1) ..  (0,-2);
    \draw[->-] (0,-2) .. controls (-5,-5) and (-5,5) .. (0,2);
    \draw[->-] (0,2) .. controls (-3.5,3) and (-3.5,-3) .. (0,-2);
    \draw[->-] (0,-2) .. controls (-2,-1) and (-2,1) .. (0,2);
    \node[left] at (-4.7,0) {$\cdots$};
    \node[left, above] at (-4.1,-0.2) {$\Gamma^+_1$};
    \node[left, above] at (-3,-0.2) {$\Gamma^-_0$};
    \node[left, above] at (-1.8,-0.2) {$\Gamma^+_0$};
    \node[left, above] at (0,-0.2) {$\Gamma^-_m$};
    \node[left, above] at (1.1,-0.2) {$\Gamma^+_m$};
    \node[left, above] at (2.4,-0.2) {$\Gamma^-_{m-1}$}; 
    \node[left, above] at (3.7,-0.2) {$\Gamma^+_{m-1}$}; 
    \node[right] at (4.5,0) {$\cdots$};
    \draw[fill] (0,-2) circle (0.05);
    \node[below] at (0,-2.2) {$\beta$};
    \node[left] at (0.2,-1.3) {$\cR_m$};
    \node[left] at (3,-2.2) {$\cR_{m-1}$}; 
    \node[left] at (-1.3,-2.2) {$\cR_{0}$};
    \draw[fill] (-2.27, -1.3) circle (0.05);
    \node[left] at (-2.27,-1.3) {$\gamma^-_0(0)$};
    \draw[fill] (0.34, -0.7) circle (0.05);
    \node[left] at (0.34,-0.7) {$\gamma^-_{m}(0)$};
    \draw[fill] (2.6, -1.3) circle (0.05); 
    \node[left] at (2.6,-1.3) {$\gamma^-_{m-1}(0)$}; 
 \end{tikzpicture}
\caption{Illustration for the proof of Lemma \ref{arch:lemme:regions}.}
\label{fig1}
\end{figure}
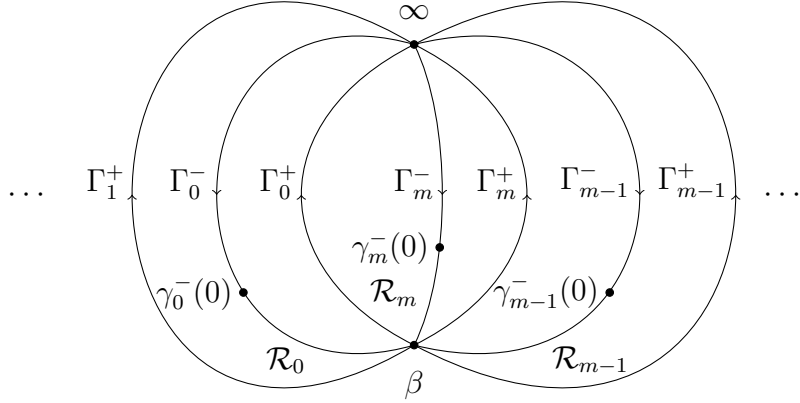

\begin{proof}[D\'emonstration]
En posant $\ell=m+1$, on obtient $f(z)=f(\beta)(1+(z-\beta)^\ell g(z))$
o\`u $g(z)$ est un polyn\^ome avec $g(\beta)\neq 0$.  Alors pour
$\epsilon>0$ assez petit, il existe un voisinage ouvert $V$ de $\beta$
et une fonction biholomorphe $h$ de $V$ dans
$B(0,\epsilon)=\{z\in\bC\,;\,|z|<\epsilon\}$ satisfaisant
$h(\beta)=0$ et
\[
 f(z)=f(\beta)(1+h(z)^\ell)
\]
pour tout $z\in V$.  Fixons un tel choix de $\epsilon$, $V$ et $h$,
puis posons $\delta=\epsilon^\ell$ et $\rho=e^{\pi i/\ell}$.
Pour $j=0,\dots,m$, on d\'efinit une fonction continue
$\gamma^+_j\colon [1,1+\delta)\to V$ en posant
\[
  \gamma^+_j(t)=h^{-1}\left(\rho^{2j}(t-1)^{1/\ell}\right)
  \quad
  (1\le t<1+\delta).
\]
Alors, pour $t\in(1,1+\delta)$ fix\'e, les nombres
$z=\gamma^+_0(t),\dots,\gamma^+_m(t)$ sont les $\ell$ solutions
distinctes de $f(z)=tf(\beta)$ avec $z\in V$.
En particulier, $\gamma^+_0,\dots,\gamma^+_m$ satisfont
les conditions (i) et (iii) du lemme, ainsi que (ii) pour
tout $t\in[1,1+\delta)$.  Pour chaque $j=0,\dots,m$, on choisit
un prolongement de $\gamma^+_j$ en une fonction continue
$\gamma^+_j\colon[1,\infty]\to S$ satisfaisant
$f(\gamma^+_j(t))=tf(\beta)$ pour tout $t\in[1,\infty]$.

De m\^eme, pour $j=0,\dots,m$, on d\'efinit une fonction continue
$\gamma^-_j\colon (1-\delta,1]\to V$ en posant
\[
\gamma^-_j(t)=h^{-1}\left(\rho^{2j+1}(1-t)^{1/\ell}\right)
  \quad (1-\delta< t\le 1).
\]
Pour $t\in(1-\delta,1)$ fix\'e, les nombres
$z=\gamma^-_0(t),\dots,\gamma^-_m(t)$ sont les $\ell$ solutions
distinctes de $f(z)=tf(\beta)$ avec $z\in V$, donc ils constituent
une permutation de $\gamma_{\beta,0}(t),\dots,\gamma_{\beta,m}(t)$.
Cette permutation \'etant ind\'ependante de $t$, il n'y a pas
de perte de g\'en\'eralit\'e \`a supposer que $\gamma^-_j$ est
la restriction de $\gamma_{\beta,j}$ \`a $(1-\delta,1]$ pour
$j=0,\dots,m$.  On \'etend alors
chaque $\gamma_{\beta,j}\colon [0,1]\to\bC$ en
une fonction continue $\gamma^-_j\colon[-\infty,1]\to S$
telle que $f(\gamma^-_j(t))=tf(\beta)$ pour tout $t\in[-\infty,1]$.

Posons $\Gamma^-_j = \gamma^-_j([-\infty,1])$ et
$\Gamma^+_j = \gamma^+_j([1,\infty])$ pour $j=0,\dots,m$,
et fixons temporairement $j,k\in\{0,1,\dots,m\}$. Les courbes
$\Gamma^-_j$ et $\Gamma^+_k$ ne se rencontrent qu'aux points
$\beta$ et $\infty$ car si $\gamma^-_j(t)=\gamma^+_k(u)$ avec
$t\in[-\infty,1]$ et $u\in[1,\infty]$, alors
$tf(\beta) = uf(\beta)$, donc $t=u=1$ ou $-t=u=\infty$.
Supposons maintenant $j<k$.  Comme les courbes $\Gamma^+_j$ et
$\Gamma^+_k$ se rencontrent \`a l'infini, il existe un plus
petit $r\in[1+\delta,\infty]$ tel que $\gamma^+_j(r)=\gamma^+_k(r)$.
Pour ce choix de $r$, la r\'eunion
$\gamma^+_j([1,r]) \cup \gamma^+_k([1,r])$ est une courbe
ferm\'ee simple $\Gamma$.  En vertu du th\'eor\`eme de
Jordan, son compl\'ement dans $S$ est donc la r\'eunion
de deux ouverts connexes $\cR$ et $\cR'$ de fronti\`ere
$\Gamma$.  Par ailleurs, on a
\[
 V\cap\Gamma
  = \gamma^+_j([1,1+\delta))\cup\gamma^+_k([1,1+\delta))
  = h(P)
 \quad
 \text{o\`u}
 \quad
 P=[0,\epsilon)\rho^{2j}\cup[0,\epsilon)\rho^{2k}.
\]
De plus, $B(0,\epsilon)\setminus P$ est la r\'eunion de deux ouverts
connexes disjoints $\cU$ et $\cU'$ (des secteurs du disque
$B(0,\epsilon)$),  o\`u $\cU$ contient les rayons
$(0,\epsilon)\rho^{2i+1}$ avec $j\le i<k$ et o\`u $\cU'$
contient ceux avec $0\le i<j$ ou $k\le i\le m$.
Comme $h\colon V\to B(0,\epsilon)$ est un hom\'eomorphisme,
$h(\cU)$ et $h(\cU')$ sont des ouverts connexes disjoints
de $S$ dont l'union est $V\setminus\Gamma$.  On peut supposer
que $h(\cU)\subset \cR$ et $h(\cU')\subset \cR'$.  Alors,
on obtient
\[
 \gamma^-_i((1-\delta,1))=h((0,\epsilon)\rho^{2i+1})
 \subseteq
 \begin{cases}
   \cR  &\text{si $j\le i<k$,}\\
   \cR' &\text{sinon.}
 \end{cases}
\]
Or, $\cR$ et $\cR'$ partagent la m\^eme fronti\`ere, contenue dans
$\Gamma^+_j \cup \Gamma^+_k$.  Donc aucun des ensembles
$\Gamma^-_i \setminus \{\beta,\infty\}=\gamma_i^-((-\infty,1))$
ne rencontre cette fronti\`ere.  Comme ce sont des courbes connexes,
on en d\'eduit que $\Gamma^-_i \setminus \{\beta,\infty\}$ est
contenu dans $\cR$ si $j\le i<k$ et dans $\cR'$ sinon.  En particulier,
aucun des ouverts $\cR$ ou $\cR'$ n'est born\'e et par suite,
on doit avoir $r=\infty$.  Cela signifie que $\Gamma^+_j$
et $\Gamma^+_k$ ne se rencontrent qu'en $\beta$ et en $\infty$.

Avec les notations ci-dessus, on d\'efinit $\cR_j=\cR$ pour le choix
de $j\in\{0,\dots,m-1\}$ et de $k=j+1$.  On d\'efinit aussi
$\cR_m=\cR'$ pour le choix de $j=0$ et de $k=m$.  Ce sont des
ouverts connexes de $\bC$ avec $\gamma_{\beta,j}([0,1))
\subset \Gamma^-_j \setminus \{\beta,\infty\} \subset \cR_j$ pour
$j=0,\dots,m$.  Il reste \`a montrer que $\cR_0,\dots,\cR_m$ sont
disjoints deux \`a deux.  On note d'abord que si $j\neq k$,
alors $\cR_j \not\subseteq \cR_k$ car
$\Gamma^-_j\setminus\{\beta,\infty\}$ est contenu dans $\cR_j$
mais pas dans $\cR_k$.  Donc si $\cR_j$ et $\cR_k$ ont au moins
un point commun, alors $\cR_j$ rencontre la fronti\`ere
de $\cR_k$.  Par suite $\cR_j$ contient au moins un point
de $\Gamma^+_i\setminus\{\beta,\infty\}$
pour un indice $i\in\{0,1,\dots,m\}$.  Par contre, le choix
de $\cR_j$ implique que $\gamma^+_i(t)\notin\cR_j$ pour
tout $t\in(1,1+\delta)$.  Donc la courbe
$\Gamma^+_i\setminus\{\beta,\infty\}$ n'est pas contenue
dans $\cR_j$ et, comme elle est connexe, elle rencontre
la fronti\`ere de $\cR_j$ sans \^etre contenue dedans.  C'est
impossible car cette fronti\`ere est l'union de deux
courbes parmi $\Gamma^+_0,\dots,\Gamma^+_m$.
\end{proof}

\begin{lemme}
\label{arch:graphe:H}
Pour tout $\beta\in B$, les $m(\beta)+1$ points
$\gamma_{\beta,j}(0)\in A$ avec $0\le j\le m(\beta)$ sont
distincts deux-\`a-deux.  De plus, soit $\cH$ le graphe
dont l'ensemble des sommets est $A\cup B$
et dont les ar\^etes sont les couples $\{\beta,\gamma_{\beta,j}(0)\}$
avec $\beta\in B$ et $0\le j \le m(\beta)$.  Alors $\cH$
est un arbre.
\end{lemme}

\begin{proof}[D\'emonstration]
La premi\`ere affirmation d\'ecoule directement du lemme
pr\'ec\'edant car, pour $\beta\in B$ et $m=m(\beta)$, ce lemme
fournit des ouverts connexes disjoints $\cR_0,\dots,\cR_m$
tels que $\gamma_{\beta,j}(0)\in \cR_j$ pour $j=0,\dots,m$.

Supposons que $\cH$ ne soit pas une for\^et.  Alors $\cH$
contient un cycle simple, c'est-\`a-dire une cha\^{\i}ne
\'el\'ementaire $(a_1,\dots,a_k)$ avec $k\ge 3$ telle que
$\{a_k,a_1\}$ soit une ar\^ete de $\cH$.  Alors, $k$ est un
entier pair et les $a_i$ appartiennent \`a $A$ ou \`a $B$
en alternance selon la parit\'e de $i$.  Quitte \`a permuter
cycliquement les \'el\'ements de cette cha\^{\i}ne, on peut
supposer que $a_1\in B$ et que $|f(a_1)|\ge |f(a_i)|$ pour
$i=1,\dots,k$. Soit $m=m(a_1)$ et soient $\cR_0,\dots,\cR_m$
les ouverts connexes associ\'es au point $a_1\in B$, comme au
lemme \ref{arch:lemme:regions}.  Pour tout point $z\neq a_1$ en
dehors de ces ouverts, on a $f(z)=tf(a_1)$ pour un
nombre r\'eel $t>1$, donc $|f(z)|>|f(a_1)|$.  On pose $a_{k+1}=a_1$
et, pour $i=1,\dots,k$, on note $\gamma_i$ le chemin de la forme
$\gamma_{\beta,j}$ qui lie $a_i$ et $a_{i+1}$. Pour tout $t\in[0,1]$,
on a $f(\gamma_i(t))=t f(a_i)$ si $i$ est impair et
$f(\gamma_i(t)) = t f(a_{i+1})$ si $i$ est pair.  Donc, dans les
deux cas, on trouve $|f(\gamma_i(t))| \le |f(a_1)|$, avec
l'in\'egalit\'e stricte si $t\neq 1$.
Comme $a_1,\dots,a_k$ sont distincts et que $\gamma_i(1) \in
\{a_3,\dots,a_{k-1}\}$ si $2\le i\le k-1$, on en d\'eduit que
la courbe
\[
 \Gamma=\gamma_1([0,1))\cup
        \gamma_2([0,1])\cup\cdots\cup\gamma_{k-1}([0,1])
        \cup\gamma_k([0,1))
\]
est contenue dans $\cR_0\cup\cdots\cup\cR_m$.  Comme c'est
un sous-ensemble connexe de $\bC$, elle est donc toute enti\`ere
contenue dans $\cR_j$ pour un entier $j$.  Comme $\gamma_1(1)=
\gamma_k(1)=a_1$, cela implique que $\gamma_1=\gamma_k$, donc
$a_2=\gamma_1(0)=\gamma_k(0)=a_k$, ce qui est impossible.

Ainsi $\cH$ est une for\^et.  Alors, le nombre de ces composantes
connexes est \'egal au nombre de ses sommets moins celui de ses ar\^etes,
c'est-\`a-dire
\[
 |A\cup B| - \sum_{\beta\in B} (m(\beta)+1)
  = s - \sum_{\beta\in B} m(\beta) =1.
\]
Donc $\cH$ est un arbre.
\end{proof}

\subsection*{\'Etape 4. Preuve du th\'eor\`eme \ref{arch:thm:graphe}}
Soit $\cG$ le graphe dont l'ensemble des sommets est $A$ et dont
les ar\^etes sont les paires
\begin{equation}
 \label{etape5:eq}
 \{\gamma_{\beta,0}(0),\gamma_{\beta,j}(0)\}
 \quad
 \big(\beta\in B, \ 1\le j\le m(\beta)\big).
\end{equation}
Comme $\cH$ est connexe, le graphe $\cG$ l'est aussi. Comme
$\cG$ poss\`ede $s=|A|$ sommets et que $\sum_{\beta\in B} m(\beta)=s-1$,
on en d\'eduit que les $s-1$ ar\^etes \eqref{etape5:eq} sont
distinctes et que $\cG$ est un arbre. En particulier, pour chaque
$\beta\in B$, le graphe $\cG$ poss\`ede $m(\beta)$ ar\^etes index\'ees
par $\beta$ et le lemme \ref{arch:lemme:longueur} montre que, pour chacune
d'elles, il existe un chemin qui remplit la condition (iv) du
th\'eor\`eme.

%
%

\section{Un calcul de semi-r\'esultant}
\label{sec:semi-resultant}

On montre d'abord la formule suivante.

\begin{proposition}
\label{semi-resultant:prop}
Avec les notations du paragraphe pr\'ec\'edant, on a
\[
 N^N \prod_{j=1}^p f(\beta_j)^{m_j}
  = \prod_{i=1}^s
      \Big( n_i^{n_i}\prod_{k\neq i}(\alpha_i-\alpha_k)^{n_k}\Big).
\]
\end{proposition}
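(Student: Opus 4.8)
The plan is to derive everything from a single partial‑fraction identity. First I would start from the logarithmic derivative $f'(z)/f(z)=\sum_{i=1}^s n_i/(z-\alpha_i)$ and multiply through by $f(z)/\prod_{i=1}^s(z-\alpha_i)^{n_i-1}=\prod_{i=1}^s(z-\alpha_i)$: on the left, using the factorization \eqref{arch:eq:f'(t)}, this produces $N\prod_{j=1}^p(z-\beta_j)^{m_j}$, while on the right it produces $\sum_{i=1}^s n_i\prod_{k\neq i}(z-\alpha_k)$. Hence, as polynomials,
\[
 N\prod_{j=1}^p(z-\beta_j)^{m_j}=\sum_{i=1}^s n_i\prod_{k\neq i}(z-\alpha_k).
\]
Evaluating at $z=\alpha_i$ annihilates every term on the right except the $i$-th, which gives the key relation
\[
 N\prod_{j=1}^p(\alpha_i-\beta_j)^{m_j}=n_i\prod_{k\neq i}(\alpha_i-\alpha_k)
 \qquad(i=1,\dots,s);
\]
note that each $\alpha_i-\beta_j$ is nonzero, the $\beta_j$ being by definition roots of $f'$ that are not roots of $f$.

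Next I would rewrite the left‑hand side of the Proposition. Since $f(\beta_j)=\prod_{i=1}^s(\beta_j-\alpha_i)^{n_i}$, interchanging the two products and using $\sum_{j=1}^p m_j=s-1$ gives
\[
 \prod_{j=1}^p f(\beta_j)^{m_j}
  =\prod_{i=1}^s\Bigl(\prod_{j=1}^p(\beta_j-\alpha_i)^{m_j}\Bigr)^{n_i}
  =(-1)^{(s-1)N}\prod_{i=1}^s\Bigl(\prod_{j=1}^p(\alpha_i-\beta_j)^{m_j}\Bigr)^{n_i}.
\]
Then I would substitute the key relation into the inner product and pull out the scalars, using $\sum_i n_i=N$, to obtain
\[
 N^N\prod_{j=1}^p f(\beta_j)^{m_j}
  =(-1)^{(s-1)N}\Bigl(\prod_{i=1}^s n_i^{n_i}\Bigr)
   \prod_{i=1}^s\Bigl(\prod_{k\neq i}(\alpha_i-\alpha_k)\Bigr)^{n_i}.
\]

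Finally I would reconcile the double product with the shape $\prod_{i=1}^s\prod_{k\neq i}(\alpha_i-\alpha_k)^{n_k}$ that appears in the statement. The subtlety is that in $\prod_{i}\bigl(\prod_{k\neq i}(\alpha_i-\alpha_k)\bigr)^{n_i}=\prod_{i\neq k}(\alpha_i-\alpha_k)^{n_i}$ the exponent attached to the ordered factor $(\alpha_i-\alpha_k)$ is $n_i$, whereas we want it to be $n_k$; relabelling $i\leftrightarrow k$ and using $\alpha_k-\alpha_i=-(\alpha_i-\alpha_k)$ turns one product into the other at the cost of the sign $\prod_i(-1)^{(s-1)n_i}=(-1)^{(s-1)N}$. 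Inserting this, the two factors $(-1)^{(s-1)N}$ cancel and one is left with exactly $\prod_{i=1}^s\bigl(n_i^{n_i}\prod_{k\neq i}(\alpha_i-\alpha_k)^{n_k}\bigr)$, which is the asserted formula. I expect this sign bookkeeping in the last step to be the only delicate point; everything else is a routine manipulation of the partial‑fraction identity, and the degenerate case $p=0$ (which forces $s=1$) is covered by the same computation read with empty products.
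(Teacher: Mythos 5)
Your argument is correct and is essentially the paper's own proof: the same polynomial identity $N\prod_{j}(z-\beta_j)^{m_j}=\sum_i n_i\prod_{k\neq i}(z-\alpha_k)$ (obtained in the paper via the product rule rather than the logarithmic derivative, which is the same thing), evaluated at $z=\alpha_i$, then the same regrouping of $\prod_j f(\beta_j)^{m_j}$. The only cosmetic difference is that the paper flips signs at the level of the key relation, using $\sum_j m_j=s-1$ to write it as $N\prod_j(\beta_j-\alpha_k)^{m_j}=n_k\prod_{i\neq k}(\alpha_i-\alpha_k)$, so that no factors $(-1)^{(s-1)N}$ ever appear, whereas you carry two such signs that cancel --- your sign bookkeeping is correct.
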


Le membre de gauche de cette \'egalit\'e est le semi-r\'esultant de
$f(z)$ et de $f'(z)$ au sens de Chudnovski \cite{Br1977,Ch1984}.

\begin{proof}[D\'emonstration]
La formule pour la d\'eriv\'ee d'un produit appliqu\'ee \`a la
factorisation \eqref{arch:eq:f(t)} de $f(z)$ livre
\[
 f'(z)=(z-\alpha_1)^{n_1-1}\cdots(z-\alpha_s)^{n_s-1}g(z)
\]
o\`u
\[
 g(z)=\sum_{k=1}^s
      n_k(z-\alpha_1)\cdots\widehat{(z-\alpha_k)}\cdots(z-\alpha_s).
\]
En comparant avec la factorisation \eqref{arch:eq:f'(t)} de
$f'(z)$, on trouve aussi que
\[
 g(z)=N(z-\beta_1)^{m_1}\cdots(z-\beta_p)^{m_p}.
\]
En \'evaluant ces deux expressions pour $g(z)$ en $z=\alpha_k$, on
en d\'eduit la relation
\[
 N\prod_{j=1}^p(\alpha_k-\beta_j)^{m_j}
  = n_k\prod_{i\neq k} (\alpha_k-\alpha_i)
 \quad
 (1\le k\le s).
\]
Comme $m_1+\cdots+m_p=s-1$, ces \'egalit\'es se r\'e\'ecrivent
\[
 N\prod_{j=1}^p(\beta_j-\alpha_k)^{m_j}
  = n_k\prod_{i\neq k} (\alpha_i-\alpha_k)
 \quad
 (1\le k\le s).
\]
On en d\'eduit, comme annonc\'e,
\begin{align*}
 N^N \prod_{j=1}^p f(\beta_j)^{m_j}
   &= N^N \prod_{j=1}^p
       \Big( \prod_{k=1}^s(\beta_j-\alpha_k)^{n_k} \Big)^{m_j}\\
  &= \prod_{k=1}^s
       \Big( N \prod_{j=1}^p(\beta_j-\alpha_k)^{m_j} \Big)^{n_k}\\
  &= \prod_{k=1}^s
       \Big( n_k \prod_{i\neq k}(\alpha_i-\alpha_k) \Big)^{n_k}
   = \prod_{i=1}^s
      \Big( n_i^{n_i}\prod_{k\neq i}(\alpha_i-\alpha_k)^{n_k}\Big).
\qedhere
\end{align*}
\end{proof}

\begin{cor}
\label{semi-resultant:cor}
Avec les m\^emes notations, on a
\[
 N! \prod_{j=1}^p \big|f(\beta_j)\big|^{m_j}
  \le \prod_{i=1}^s
      \Big( n_i! \prod_{k\neq i}|\alpha_i-\alpha_k|^{n_k}\Big).
\]
\end{cor}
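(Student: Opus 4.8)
The plan is to obtain the corollary as an essentially immediate consequence of Proposition \ref{semi-resultant:prop}. First I would take absolute values in the identity of that proposition, which gives
\[
 N^N \prod_{j=1}^p |f(\beta_j)|^{m_j}
  = \prod_{i=1}^s \Big( n_i^{n_i}\prod_{k\neq i}|\alpha_i-\alpha_k|^{n_k}\Big).
\]
Multiplying both sides by $N!/N^N$, the left-hand side becomes exactly the quantity $N!\prod_j|f(\beta_j)|^{m_j}$ we want to bound, while the right-hand side of the corollary differs from this rescaled product only in having $n_i!$ where we now have $n_i^{n_i}$. Hence, after dividing through by the common positive factor $\prod_{i=1}^s\prod_{k\neq i}|\alpha_i-\alpha_k|^{n_k}$, the inequality to be proved is equivalent to the purely combinatorial statement
\[
 \frac{N!}{n_1!\cdots n_s!}\le \frac{N^N}{n_1^{n_1}\cdots n_s^{n_s}}.
\]

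So the one nontrivial ingredient is this inequality on the multinomial coefficient $\binom{N}{n_1,\dots,n_s}$. I would prove it by the standard one-line argument: apply the multinomial theorem to $1=(p_1+\cdots+p_s)^N$ with $p_i=n_i/N$ (legitimate since $n_1+\cdots+n_s=N$); every term in the expansion is nonnegative, so in particular the single term indexed by $(n_1,\dots,n_s)$ satisfies
\[
 \binom{N}{n_1,\dots,n_s}\prod_{i=1}^s\Big(\frac{n_i}{N}\Big)^{n_i}\le 1,
\]
which rearranges to exactly the displayed bound. An induction on $s$ reducing to the binomial case $\binom{N}{k}\,k^k(N-k)^{N-k}\le N^N$ (itself immediate from the binomial theorem) would serve equally well.

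I do not expect any real obstacle beyond routine bookkeeping: one must check that the factor $\prod_{i}\prod_{k\neq i}|\alpha_i-\alpha_k|^{n_k}$ is genuinely common to both sides before cancelling, and that the conventions $0!=1$ and $0^0=1$ keep the argument valid should some $n_i$ vanish (in the setting of Section \ref{sec:arch} all $n_i\ge 1$, so this does not even arise). No analytic estimate and nothing further about the points $\beta_j$ is needed beyond Proposition \ref{semi-resultant:prop} itself.
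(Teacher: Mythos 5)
Your argument is correct and is essentially the paper's own: you reduce the corollary to the bound $N!/(n_1!\cdots n_s!)\le N^N/(n_1^{n_1}\cdots n_s^{n_s})$ and prove it by observing that $\binom{N}{n_1,\dots,n_s}\prod_i (n_i/N)^{n_i}$ is a single term in the multinomial expansion of $\big(\tfrac{n_1}{N}+\cdots+\tfrac{n_s}{N}\big)^N=1$, which is exactly the paper's computation combined with taking absolute values in Proposition \ref{semi-resultant:prop}. Nothing further is needed.
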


\begin{proof}[D\'emonstration]
Comme $N=n_1+\cdots+n_s$, on trouve
\[
 \frac{N!}{n_1!\cdots n_s!}
   \Big(\frac{n_1}{N}\Big)^{n_1}\cdots\Big(\frac{n_s}{N}\Big)^{n_s}
 \le \Big(\frac{n_1}{N}+\cdots+\frac{n_s}{N}\Big)^N
  = 1.
\]
On en tire $N!\prod_{i=1}^s n_i^{n_i} \le N^N \prod_{i=1}^s n_i!$\,,
et la conclusion suit.
\end{proof}

%
%

\section{Volume des composantes archim\'ediennes}
\label{sec:volarch}

On peut maintenant d\'emontrer la majoration de volume
du th\'eor\`eme \ref{res:principal} (i).  Les notations
sont celles du paragraphe \ref{sec:resultat}.

\begin{theoreme}
 \label{volarch:thm}
Soit $v$ une place archim\'edienne de $K$ et soit $\cC_{\un,v}$ le
corps convexe de $K_v^s$ d\'efini au paragraphe \ref{subsec:res}
pour le choix d'un $s$-uplet $\un=(n_1,\dots,n_s)\in\bN_+^s$.
Alors, on a
\[
 \mu_v(\cC_{\un,v})^{1/d_v}\le c_v N^{2s-2} |\Delta_\un|_v
 \quad
 \text{avec}
 \quad
 c_v = 2^s e^{sR_v} (2\pi R_v^s)^{s-1} |\Delta_\uun|_v^{-1}
\]
o\`u $N=n_1+\cdots+n_s$,
$R_v=\max_{1\le i<j\le s}|\alpha_i-\alpha_j|_v$,
et $\uun=(1,\dots,1)$.
\end{theoreme}

\begin{proof}[D\'emonstration]
Pour simplifier, on peut supposer $K$ plong\'e dans $\bC$ de sorte
que $|\alpha|_v=|\alpha|$ pour tout $\alpha\in K$. Quitte \`a permuter
$\alpha_1,\dots,\alpha_s$ si n\'ecessaire, on peut aussi supposer que
$n_1\ge\cdots\ge n_s$ forment une suite d\'ecroissante.  On note $D$
le disque ferm\'e de rayon $R_v$ et de centre
$(\alpha_1+\dots+\alpha_s)/s$ dans $\bC$.  Comme ce disque contient
$\alpha_1,\dots,\alpha_s$, il contient aussi l'enveloppe convexe $\cK$
de ces points.

Supposons d'abord que $n_1\ge 2$ et notons $r$ le plus grand indice
tel que $n_r\ge 2$.  On forme le polyn\^ome
\[
 f(z)=\frac{f_\un(z)}{(z-\alpha_1)\cdots(z-\alpha_s)}
     =\prod_{i=1}^r (z-\alpha_j)^{n_i-1}.
\]
L'ensemble de ses racines est $A=\{\alpha_1,\dots,\alpha_r\}$
et son degr\'e est $N-s$.  Sa d\'eriv\'ee s'\'ecrit
\[
 f'(z)=(N-s)(z-\alpha_1)^{n_1-2}\cdots(z-\alpha_r)^{n_r-2}
       (z-\beta_1)^{m_1}\cdots(z-\beta_p)^{m_p}
\]
o\`u $B=\{\beta_1,\dots,\beta_p\}$ est l'ensemble des racines
de $f'(z)$ en dehors de $A$, et o\`u $m_j$ est la multiplicit\'e
de $\beta_j$ pour $j=1,\dots,p$.  On choisit un arbre $G$
comme au th\'eor\`eme \ref{arch:thm:graphe} pour ce polyn\^ome
$f(z)$.  Par construction, l'ensemble de ses sommets est $A$.
On \'etend ensuite $G$ en un graphe $\tG$ sur
$\{\alpha_1,\dots,\alpha_s\}$ de la mani\`ere suivante.
Pour chaque $j=r+1,\dots,s$, on choisit
un chemin $\gamma_j\colon[0,1]\to\bC$ tel que $\gamma_j(1)=\alpha_j$
et $f(\gamma_j(t))=tf(\alpha_j)$ comme au th\'eor\`eme
\ref{descente:thm}.  Alors $\gamma_j(0)$ est une racine de $f$,
donc un \'el\'ement de $A$,
et on ajoute l'ar\^ete $\{\gamma_j(0), \alpha_j\}$ au graphe $G$.
Enfin, on enracine l'arbre $\tG$ ainsi construit en lui
choisissant $\alpha_1\in A$ comme racine.  Alors,
$\cC_{\un,v}$ est contenu dans l'ensemble $\cK_v$ des points
$(x_1,\dots,x_s)$ de $K_v^s$ satisfaisant
\[
 |x_1|_v
    \le e^{R_v} (N-1)!
\]
et
\[
 |x_ie^{\alpha_j-\alpha_i}-x_j|_v
 \le b_{i,j}
  := \max_{1\le k\le s}
      \left|
       \int_{\alpha_i}^{\alpha_j}
          f_{\un-\ue_k}(z)e^{\alpha_j-z}\dz
      \right|
\]
pour chaque arc $(\alpha_i,\alpha_j)$ de $\tG$ avec
$\alpha_i<\alpha_j$.  Comme $\tG$ est
un arbre enracin\'e, la proposition \ref{graphes:prop} montre que les $s$
formes lin\'eaires qui d\'efinissent $\cK_v$ sont lin\'eairement
ind\'ependantes, de d\'eterminant $\pm 1$.
Donc $\cK_v$ est un corps convexe de $K_v^s$ avec
\begin{equation}
 \label{volarch:eq1}
 \mu_v(\cC_{\un,v})^{1/d_v}
   \le \mu_v(\cK_v)^{1/d_v}
   \le 2^s e^{R_v} (N-1)!
       \prod_{(\alpha_i,\alpha_j)\in E} b_{i,j}
\end{equation}
o\`u $E$ d\'esigne l'ensemble des arcs (ar\^etes ordonn\'ees) de $\tG$.

Fixons temporairement $(\alpha_i,\alpha_j)\in E$ et
$k\in\{1,\dots,s\}$.  On a forc\'ement $i\le r$, c'est-\`a-dire
$\alpha_i\in A$.  Si $j\le r$, on a aussi $\alpha_j\in A$ et
$\{\alpha_i,\alpha_j\}$ est une ar\^ete de $G$.  Alors,
le th\'eor\`eme \ref{arch:thm:graphe} lui associe
un \'el\'ement $\beta$ de $B$ et
un chemin $\gamma\colon [0,1]\to\bC$ de longueur au plus
$2\pi R_v N$, contenu dans $\cK$, joignant $\alpha_i$ et $\alpha_j$,
tel que
\[
 \max_{0\le t\le 1}|f(\gamma(t))| = |f(\beta)|.
\]
On en d\'eduit
\begin{align*}
 \left|
       \int_{\alpha_i}^{\alpha_j}
          f_{\un-\ue_k}(z)e^{\alpha_j-z}\dz
 \right|
 &=
 \left|
       \int_{\alpha_i}^{\alpha_j}
          f(z)(z-\alpha_1)\cdots\widehat{(z-\alpha_k)}\cdots(z-\alpha_s)
          e^{\alpha_j-z} \dz
 \right|\\
 &\le 2\pi R_v N |f(\beta)|\,
      \max_{z\in \cK}
        \big|
        (z-\alpha_1)\cdots\widehat{(z-\alpha_k)}\cdots(z-\alpha_s)
          e^{\alpha_j-z}
        \big| \\
 &\le 2\pi R_v^s e^{R_v} N |f(\beta)|\,,
\end{align*}
car $|z-\alpha_\ell|\le R_v$ pour tout $z\in\cK$ et tout $\ell=1,\dots,s$.
Enfin, si $j>r$, on a $\alpha_i=\gamma_j(0)$ pour le chemin $\gamma_j$
choisi pr\'ec\'edemment.  En vertu du th\'eor\`eme \ref{descente:thm},
l'image de $\gamma_j$ est contenue dans $\cK$ de longueur au plus
$\pi R_v N\le 2\pi R_v N$.  Alors le m\^eme calcul que ci-dessus donne
\[
 \left|
       \int_{\alpha_i}^{\alpha_j}
          f_{\un-\ue_k}(z)e^{\alpha_j-z}\dz
 \right|
 \le 2\pi R_v^s e^{R_v} N |f(\alpha_j)|\,.
\]

Comme chaque $\beta_j$ est associ\'e \`a $m_j$ ar\^etes de $G$
et que $m_1+\cdots+m_p=r-1$, on d\'eduit de \eqref{volarch:eq1}
que
\begin{equation}
 \label{volarch:eq2}
 \mu_v(\cC_{\un,v})^{1/d_v}
   \le 2^s e^{R_v} (N-1)! \big(2\pi R_v^s e^{R_v} N)^{s-1}
       \prod_{j=1}^p |f(\beta_j)|^{m_j}
       \prod_{j=r+1}^s |f(\alpha_j)|\,.
\end{equation}
Comme $n_k=1$ pour $k>r$, le corollaire \ref{semi-resultant:cor} livre
\[
 (N-s)! \prod_{j=1}^p \big|f(\beta_j)\big|^{m_j}
  \le \prod_{i=1}^r
      \Big( (n_i-1)! \prod_{k\neq i}|\alpha_i-\alpha_k|^{n_k-1}\Big).
\]
Pour $i=r+1,\dots,s$, on trouve aussi
\[
 |f(\alpha_i)|=\prod_{k=1}^r|\alpha_i-\alpha_k|^{n_k-1}
   =(n_i-1)! \prod_{k\neq i} |\alpha_i-\alpha_k|^{n_k-1}.
\]
On en d\'eduit que
\[
 (N-s)! \prod_{j=1}^p |f(\beta_j)|^{m_j}
       \prod_{j=r+1}^s |f(\alpha_j)|
 \le \prod_{i=1}^s
      \Big( (n_i-1)! \prod_{k\neq i}|\alpha_i-\alpha_k|^{n_k-1}\Big)
  = \frac{|\Delta_\un|_v}{|\Delta_\uun|_v}.
\]
En substituant cette majoration dans \eqref{volarch:eq2},
on obtient $\mu_v(\cC_{\un,v})^{1/d_v}\le c_vN^{2s-2}|\Delta_\un|_v$,
comme annonc\'e.
\end{proof}

%
%

\section{Une for\^et aux places ultram\'etriques}
\label{sec:ultra}

Soit $v$ une place ultram\'etrique de $K$.  Dans ce paragraphe,
on utilise la terminologie du paragraphe \ref{sec:graphes} pour
b\^atir une for\^et sur un sous-ensemble quelconque, fini et
non vide, de $K_v$. On commence par une construction pr\'eliminaire.

\begin{proposition}
 \label{ultra:prop:arbre}
Soit $A$ un sous-ensemble fini non vide de $K_v$ et soit $\alpha_0\in A$.
Il existe un arbre $\cG$ enracin\'e en $\alpha_0$ avec $A$ pour ensemble
des sommets, tel que, pour tout choix de $\alpha,\beta,\gamma\in A$ avec
$\beta\in S_\cG(\alpha)$, on ait
\begin{equation}
 \label{ultra:prop:arbre:eq}
 \gamma\in D_\cG(\beta)
    \quad\Longleftrightarrow\quad
 |\alpha-\beta|_v >|\beta-\gamma|_v>0.
\end{equation}
\end{proposition}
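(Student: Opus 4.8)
The plan is to build the tree $\cG$ explicitly by recursion on the structure of $A$ viewed through the non-archimedean distance, exploiting the fact that in a metric space coming from an ultrametric absolute value, the closed balls form a nested (laminar) family. First I would fix the distinguished root $\alpha_0$ and, for each $\beta\in A\setminus\{\alpha_0\}$, consider the quantity $\rho(\beta)=|\alpha_0-\beta|_v$. The key observation is that the relation ``$|\alpha_0-\beta|_v>|\beta-\gamma|_v$'' describes exactly the points $\gamma$ lying in a smaller ball around $\beta$ than the distance from $\alpha_0$; because the absolute value is ultrametric, whenever $|\beta-\gamma|_v<|\alpha_0-\beta|_v$ one has $|\alpha_0-\gamma|_v=|\alpha_0-\beta|_v$, so this is a genuine equivalence relation on such points and the balls involved are either nested or disjoint. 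This is what will make a tree structure emerge.

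The construction I would use: among all $\beta\in A\setminus\{\alpha_0\}$, let $r_{\max}$ be the maximal value of $|\alpha_0-\beta|_v$; the set $A'=\{\beta\in A : |\alpha_0-\beta|_v=r_{\max}\}\cup\{\alpha_0\}$ will be the ``top level''. Within $A\setminus\{\alpha_0\}$, declare two elements $\beta,\beta'$ equivalent if $|\beta-\beta'|_v<r_{\max}$ (equivalently if they lie in a common ball of radius $<r_{\max}$ not containing $\alpha_0$); by ultrametricity this partitions $A\setminus\{\alpha_0\}$ into clusters, each contained in a ball of radius $r_{\max}$ disjoint from $\alpha_0$, and each cluster $C$ has the property that all its points are at distance exactly $r_{\max}$ from $\alpha_0$. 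In each cluster $C$ I pick a representative $\beta_C\in C$, make $\{\alpha_0,\beta_C\}$ an edge with $\beta_C\in S_\cG(\alpha_0)$, and then recurse: apply the same construction to the set $C$ with root $\beta_C$, since $C$ is again a finite nonempty subset of $K_v$. The recursion terminates because $|C|<|A|$ at each step. Unwinding, $\cG$ is a tree on $A$ rooted at $\alpha_0$, and $D_\cG(\beta_C)=C\setminus\{\beta_C\}$ by construction.

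It then remains to verify the equivalence~\eqref{ultra:prop:arbre:eq}. Suppose $\beta\in S_\cG(\alpha)$, so that at the stage of the recursion with root $\alpha$, $\beta$ was chosen as the representative of a cluster $C$ with $D_\cG(\beta)=C\setminus\{\beta\}$, all points of $C$ being at distance exactly $r:=|\alpha-\beta|_v$ from $\alpha$, and $C$ being a maximal set of diameter $<r$ among the points of its level. If $\gamma\in D_\cG(\beta)=C\setminus\{\beta\}$, then $\gamma$ and $\beta$ lie in the common cluster $C$, hence $0<|\beta-\gamma|_v<r=|\alpha-\beta|_v$, giving the right-hand side. Conversely, if $|\alpha-\beta|_v>|\beta-\gamma|_v>0$, then $\gamma\neq\beta$ and $|\alpha-\gamma|_v=|\alpha-\beta|_v=r$ by the ultrametric inequality, so $\gamma$ sits at the same level as the cluster $C$; moreover $|\beta-\gamma|_v<r$ forces $\gamma$ into the same diameter-$<r$ cluster as $\beta$, i.e.\ $\gamma\in C\setminus\{\beta\}=D_\cG(\beta)$. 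The main obstacle, and the point I would be most careful about, is making the recursion and the notion of ``level/cluster'' precise enough that the clusters at each stage are genuinely the equivalence classes of an ultrametric relation (so that the partial order $<$ on $\cG$ agrees, in the sense of the proposition, with the distance comparison at \emph{every} ancestor $\alpha$, not merely the immediate parent); this is exactly where ultrametricity is used, via the isosceles-triangle property $|\beta-\gamma|_v<|\alpha-\beta|_v\Rightarrow|\alpha-\gamma|_v=|\alpha-\beta|_v$, and it must be invoked at each level of the recursion.
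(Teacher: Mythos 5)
There is a genuine flaw in your construction, not just in its write-up. You cluster the whole of $A\setminus\{\alpha_0\}$ by the relation $|\beta-\beta'|_v<r_{\max}$, where $r_{\max}$ is the largest distance to $\alpha_0$, and you attach \emph{every} cluster to $\alpha_0$ through an arbitrarily chosen representative. Your claim that ``each cluster $C$ has all its points at distance exactly $r_{\max}$ from $\alpha_0$'' is false for the cluster consisting of the points strictly closer to $\alpha_0$ than $r_{\max}$ (the trace on $A$ of the open ball of radius $r_{\max}$ around $\alpha_0$), and this is not a cosmetic issue: it makes the resulting tree violate \eqref{ultra:prop:arbre:eq}. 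Concretely, take $K_v=\bQ_p$, $A=\{0,\,p,\,p^2,\,1\}$, $\alpha_0=0$. Then $r_{\max}=1$ and the clusters of $A\setminus\{0\}$ are $\{1\}$ and $\{p,p^2\}$. Whichever representative $\beta\in\{p,p^2\}$ you join to $0$, the recursion inside $\{p,p^2\}$ makes the other element $\gamma$ a descendant of $\beta$, yet $|\alpha_0-\beta|_v>|\beta-\gamma|_v$ fails: if $\beta=p$ one gets $1/p\not>1/p$, and if $\beta=p^2$ one gets $1/p^2<1/p$. So $\gamma\in D_\cG(\beta)$ while the right-hand side of \eqref{ultra:prop:arbre:eq} is false. (The correct tree here has $p$ and $p^2$ both as children of $0$.) Your verification step goes through only because it invokes the false uniform-distance claim; the ``converse'' direction, which uses $|\alpha-\gamma|_v=|\alpha-\beta|_v$, likewise breaks down for this cluster.

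The repair is exactly what the paper does: let $\rho$ be the diameter of $A$ (which, as you note, equals $r_{\max}$ by ultrametricity), pick a maximal subset $\{\alpha_0,\alpha_1,\dots,\alpha_k\}$ of $A$ containing $\alpha_0$ whose points are pairwise at distance $\rho$, and partition $A$ into the blocks $A_i=\{\beta\in A\,;\,|\alpha_i-\beta|_v<\rho\}$. The block $A_0$ containing $\alpha_0$ is \emph{not} detached: one recurses on $A_0$ with root $\alpha_0$ itself, recurses on each $A_i$ ($i\ge1$) with root $\alpha_i$, and adds only the edges $\{\alpha_0,\alpha_i\}$, $1\le i\le k$. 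With that modification all new arcs $(\alpha_0,\alpha_i)$ do satisfy $|\alpha_0-\alpha_i|_v=\rho$ strictly larger than every distance occurring inside $A_i$, and your verification argument (including the isosceles-triangle step) becomes correct; the points of $A$ close to $\alpha_0$ get attached to $\alpha_0$ only at deeper recursion levels, with smaller radii, which is what \eqref{ultra:prop:arbre:eq} requires.
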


\begin{proof}[D\'emonstration]
On proc\`ede par r\'ecurrence sur la cardinalit\'e $|A|$ de $A$.  Si $|A|=1$,
il n'y a rien \`a d\'emontrer. Supposons $|A|\ge 2$.  Soit $\rho$ la distance
maximale entre deux \'el\'ements de $A$, et soit $\{\alpha_0,\dots,\alpha_k\}$
un sous-ensemble maximal de $A$ contenant $\alpha_0$, dont les \'el\'ements
sont \`a distance mutuelle $|\alpha_i-\alpha_j|_v=\rho$ pour $0\le i<j\le k$.
Comme la distance est ultram\'etrique, on a $k\ge 1$ et les ensembles
\[
 A_i:=\{\beta\in A\,;\, |\alpha_i-\beta|_v<\rho\}
 \quad
 (0\le i \le k)
\]
forment une partition de $A$.  Pour $i=0,\dots,k$, on a $\alpha_i\in A_i$
et $|A_i|<|A|$, donc on peut supposer qu'il existe un arbre enracin\'e
$\cG_i=(\alpha_i,A_i,E_i)$ tel que la condition \eqref{ultra:prop:arbre:eq}
soit remplie, avec $\cG$ remplac\'e par $\cG_i$, pour tout choix de
$\alpha,\beta,\gamma \in A_i$ avec $\beta\in S_{\cG_i}(\alpha)$.  On pose
\[
 E=E_0\cup\cdots\cup E_k\cup\{\{\alpha_0,\alpha_1\},\dots,\{\alpha_0,\alpha_k\}\}.
\]
Alors $\cG=(\alpha_0,A,E)$ est un arbre enracin\'e en $\alpha_0$.  Soient
$\alpha,\beta,\gamma\in A$ avec $\beta\in S_\cG(\alpha)$, et soit $i$
l'indice pour lequel $\alpha\in A_i$.  Si $\beta\in A_i$, alors
$\beta\in S_{\cG_i}(\alpha)$ et $D_\cG(\beta)=D_{\cG_i}(\beta)$, donc
\[
 \gamma\in D_\cG(\beta)
    \quad\Longleftrightarrow\quad
 \gamma\in D_{\cG_i}(\beta)
    \quad\Longleftrightarrow\quad
 |\alpha-\beta|_v >|\beta-\gamma|_v>0.
\]
Si au contraire $\beta\in A_j$ avec $j\neq i$, on a forc\'ement $i=0$,
$\alpha=\alpha_0$ et $\beta=\alpha_j$.  Alors $|\alpha-\beta|_v=\rho$ et
$D_\cG(\beta)=A_j\setminus\{\alpha_j\}$, et on trouve encore
\[
 \gamma\in D_\cG(\beta)
    \quad\Longleftrightarrow\quad
 \rho>|\alpha_j-\gamma|_v>0
    \quad\Longleftrightarrow\quad
 |\alpha-\beta|_v >|\beta-\gamma|_v>0.
\]
Donc $\cG$ poss\`ede la propri\'et\'e requise.
\end{proof}

Comme la preuve le fait voir, le graphe $\cG$ ainsi construit
n'est en g\'en\'eral pas unique (car le choix de
$\alpha_1,\dots,\alpha_k\in A$ ne l'est pas).  On en d\'eduit la
construction suivante qui n'est g\'en\'eralement pas unique
non plus.

\begin{theoreme}
 \label{ultra:thm:foret}
Soit $A$ un sous-ensemble fini non vide de $K_v$, soit $\delta>0$,
et soit $R$ un sous-ensemble maximal de $A$ dont les \'el\'ements
sont \`a distance mutuelle sup\'erieure ou \'egale \`a $\delta$.
Alors, il existe une for\^et enracin\'ee $\cG$ dont $A$ est
l'ensemble des sommets et $R$ l'ensemble des racines, qui
poss\`ede les propri\'et\'es suivantes:
\begin{itemize}
\item[(i)] pour tout $\beta\in R$ et tout $\gamma\in A$, on a
\[
 \gamma\in D_\cG(\beta)
    \ssi
 \delta >|\beta-\gamma|_v>0;
\]
\item[(ii)] pour tout choix de $\alpha,\beta,\gamma\in A$ avec
$\beta\in S_\cG(\alpha)$, on a
\[
 \gamma\in D_\cG(\beta)
    \quad\Longleftrightarrow\quad
 |\alpha-\beta|_v >|\beta-\gamma|_v>0.
\]
\end{itemize}
\end{theoreme}

\begin{proof}[D\'emonstration]
Pour chaque $\rho\in R$, on d\'efinit
\[
 A^{(\rho)}=\{\alpha\in A\,;\, |\alpha-\rho|_v<\delta\},
\]
et on choisit un arbre enracin\'e $\cG^{(\rho)} =
(\rho,A^{(\rho)},E^{(\rho)})$ comme dans la proposition
\ref{ultra:prop:arbre}.  Comme les ensembles $A^{(\rho)}$
avec $\rho\in R$ forment une partition de $A$, la r\'eunion
de ces graphes fournit une for\^et enracin\'ee $G=(R,A,E)$
o\`u $E=\cup_{\rho\in R}E^{(\rho)}$.  Par construction, elle
poss\`ede la propri\'et\'e (i). Pour montrer qu'elle
poss\`ede aussi la propri\'et\'e (ii), fixons
$\alpha,\beta,\gamma\in A$ avec $\beta\in S_\cG(\alpha)$.
Soit $\rho\in R$ tel que $\alpha\in A^{(\rho)}$.  Puisque
$\beta\in S_\cG(\alpha)$, on a forc\'ement $\beta\in A^{(\rho)}$
et $D_\cG(\beta)=D_{\cG^{(\rho)}}(\beta)$.  De plus si
$\gamma$ satisfait $|\alpha-\beta|_v >|\beta-\gamma|_v$
alors $|\beta-\gamma|_v<\delta$ et par suite $\gamma\in A^{(\rho)}$.
La condition (ii) sur $\alpha,\beta,\gamma$ est donc satisfaite
dans $\cG$ car elle l'est dans $G^{(\rho)}$.
\end{proof}

On peut reformuler les conditions (i) et (ii) du th\'eor\`eme
en termes de
cha\^{\i}nes \'el\'ementaires de la mani\`ere suivante: pour
$\gamma\in A$, une suite $(\gamma_1,\dots,\gamma_k)$
dans $G$, avec $k\ge 1$, commen\c{c}ant sur une racine
$\gamma_1\in R$, peut
\^etre prolong\'ee en une cha\^{\i}ne \'el\'ementaire
$(\gamma_1,\dots,\gamma_\ell)$ terminant sur
$\gamma_\ell=\gamma$ si et seulement si on a $k=1$ et
$\delta>|\gamma_1-\gamma|_v>0$ ou bien $(\gamma_1,\dots,\gamma_k)$
est une cha\^{\i}ne \'el\'ementaire avec $k\ge 2$  et
$|\gamma_{k-1}-\gamma_k|_v > |\gamma_k-\gamma|_v$.

%
%

\section{Volume des composantes ultram\'etriques}
\label{sec:volultra}

On compl\`ete ici la preuve du th\'eor\`eme \ref{res:principal}
en d\'emontrant les majorations de volume des parties (ii)
et (iii).  Les notations sont celles du paragraphe \ref{sec:resultat}.

\begin{theoreme}
Soit $v$ une place de $K$ au-dessus d'un nombre premier $p$,
soit $\un=(n_1,\dots,n_s)\in\bN_+^s$, et soit $N=n_1+\cdots+n_s$.
Alors le sous-$\cO_v$-module $\cC_{\un,v}$ de $K_v^s$
d\'efini au paragraphe \ref{subsec:res} satisfait
\[
 \mu_v(\cC_{\un,v})^{1/d_v}\le (p^3N)^s |\Delta_\un|_v.
\]
De plus, si $|\alpha_i-\alpha_j|_v=1$ pour tout choix
de $i,j\in\{1,\dots,s\}$ avec $i\neq j$, on a aussi
\[
 \mu_v(\cC_{\un,v})^{1/d_v} = |\Delta_\un|_v.
\]
\end{theoreme}

\begin{proof}[D\'emonstration]
On applique le th\'eor\`eme \ref{ultra:thm:foret} avec
$A=\{\alpha_1,\dots,\alpha_s\}$ et $\delta=p^{-1/(p-1)}$.
Cela fournit une for\^et enracin\'ee $G$ de racines $R$,
de sommets $A$ et d'arcs $E$.  Pour chaque $\alpha\in A$,
on d\'efinit $x_\alpha=x_i$ et $n_\alpha=n_i$ o\`u $i$ est
l'indice pour lequel $\alpha=\alpha_i$.  Alors, $\cC_{\un,v}$ est
contenu dans l'ensemble $\cK_v$ des
points $(x_1,\dots,x_s)\in K_v^s$ satisfaisant
\[
 |x_\beta|_v
    \le p^3 N \prod_{\gamma\in A}
              \max\{|\beta-\gamma|_v,\,\delta\}^{n_\gamma}
\]
pour toute racine $\beta\in R$, ainsi que
\[
 |x_\alpha e^{\beta-\alpha} - x_\beta|_v
   \le p^3 N \prod_{\gamma\in A}
             \max\{|\alpha-\gamma|_v,\,|\beta-\gamma|_v\}^{n_\gamma}
\]
pour tout arc $(\alpha,\beta)\in E$, c'est-\`a-dire pour toute
paire $\alpha,\beta\in A$ avec $\beta\in S_G(\alpha)$ (car
alors on a $|\alpha-\beta|_v<\delta$).  En vertu de la proposition
\ref{graphes:prop}, les $s$ formes lin\'eaires qui d\'efinissent
$\cK_v$ sont lin\'eairement ind\'ependantes, de d\'eterminant $\pm 1$.
Donc $\cK_v$ est un convexe de $K_v^s$ avec
\[
 \mu_v(\cC_{\un,v})^{1/d_v}
   \le \mu_v(\cK_v)^{1/d_v}
   \le (p^3N)^s
      \Delta' \Delta''
\]
o\`u
\[
 \Delta'=\prod_{\substack{\beta\in R\\ \gamma\in A}}
         \max\{|\beta-\gamma|_v,\,\delta\}^{n_\gamma}
 \et
 \Delta''=\prod_{\substack{\alpha,\beta,\gamma\in A\\ \beta\in S_G(\alpha)}}
         \max\{|\alpha-\gamma|_v,\,|\beta-\gamma|_v\}^{n_\gamma}.
\]
Soient $\beta,\gamma\in A$.  Si $\beta\in R$, la condition (i) du
th\'eor\`eme \ref{ultra:thm:foret} livre
\begin{equation}
 \label{volultra:eq:beta-gamma}
 \max\{|\beta-\gamma|_v,\,\delta\}
  =\begin{cases}
     \delta &\text{si $\gamma\in D_G(\beta)\cup\{\beta\}$,}\\
     |\beta-\gamma|_v &\text{sinon.}
   \end{cases}
\end{equation}
Par contre, si $\beta\notin R$, il existe un et un seul $\alpha\in A$
tel que $\beta\in S_G(\alpha)$ et, comme
\[
 |\alpha-\gamma|_v > |\beta-\gamma|_v
 \ssi
 |\alpha-\beta|_v > |\beta-\gamma|_v,
\]
la condition (ii) du m\^eme th\'eor\`eme livre
\begin{equation}
 \label{volultra:eq:alpha-beta-gamma}
 \max\{|\alpha-\gamma|_v,\,|\beta-\gamma|_v\}
  =\begin{cases}
     |\alpha-\gamma|_v &\text{si $\gamma\in D_G(\beta)\cup\{\beta\}$,}\\
     |\beta-\gamma|_v &\text{sinon.}
   \end{cases}
\end{equation}
Comme $D_G(\beta)\cup\{\beta\}$ parcourt les composantes connexes
de $G$ lorsque $\beta$ parcourt $R$ et qu'on a
$\sum_{\gamma\in A} n_\gamma=N$, l'\'egalit\'e
\eqref{volultra:eq:beta-gamma} entra\^{\i}ne
\[
 \Delta'
 = \delta^N
   \prod_{\substack{\beta\in R\\
           \gamma\notin D_G(\beta)\cup\{\beta\}}}
         |\beta-\gamma|_v^{n_\gamma}.
\]
Par ailleurs, l'\'egalit\'e \eqref{volultra:eq:alpha-beta-gamma}
entra\^{\i}ne
\[
 \Delta''
 = \prod_{\substack{\alpha\in A\\ \gamma\in D_G(\alpha)}}
         |\alpha-\gamma|_v^{n_\gamma}
   \prod_{\substack{\beta\notin R\\ \gamma\notin D_G(\beta)\cup\{\beta\}}}
         |\beta-\gamma|_v^{n_\gamma}
\]
On en d\'eduit que
\[
 \Delta'\Delta''
 = \delta^N \prod_{\beta\in A} \prod_{\gamma\in A\setminus\{\beta\}}
    |\beta-\gamma|_v^{n_\gamma}.
\]
Comme $\delta^N=\prod_{\beta\in A} \delta^{n_\beta}
\le \prod_{\beta\in A} |n_\beta!|_v
\le \prod_{\beta\in A} |(n_\beta-1)!|_v$, on conclut que
\[
 \mu_v(\cC_{\un,v})^{1/d_v}
   \le (p^3N)^s
       \prod_{\beta\in A}
         \Big|
         (n_\beta-1)!\prod_{\gamma\neq\beta} (\beta-\gamma)^{n_\gamma}
         \Big|_v
   = (p^3N)^s |\Delta_\un|_v.
\]
Enfin, si $|\alpha_i-\alpha_j|_v=1$ pour tout choix de
$i,j\in\{1,\dots,s\}$ avec $i\neq j$, alors $\cC_{\un,v}$ est l'ensemble
des points $(x_1,\dots,x_s)\in K_v^s$ qui satisfont
\[
 |x_i|_v \le |(n_i-1)!|_v
\]
pour $i=1,\dots,s$, donc
\[
 \mu(\cC_{\un,v})^{1/d_v}=\prod_{i=1}^s |(n_i-1)!|_v = |\Delta_\un|_v.
 \qedhere
\]
\end{proof}

%
%

\section{Un cas particulier}
\label{sec:dexp}

Les convexes ad\'eliques $\cC_\un$ associ\'es \`a un point
$(\alpha_1,\dots,\alpha_s)\in K^s$ d\'ependent seulement des
diff\'erences $\alpha_j-\alpha_i$ avec $1\le i<j\le s$.  Donc,
on peut toujours supposer que $\alpha_1=0$.  Ainsi, pour $s=2$,
on a simplement un couple $(0,\alpha)\in K^2$.  La proposition
ci-dessous pr\'ecise le corollaire \ref{res:cor} pour un tel
point et pour les paires $\un=(n,n)\in\bN_+^2$ de la diagonale.
Dans cet \'enonc\'e le convexe ad\'elique est nomalis\'e de
sorte que sa composante $v$-adique soit contenue dans $\cO_v^2$
pour toute place ultram\'etrique $v$ de $K$.  On l'utilise par
la suite pour d\'emontrer les propositions
\ref{intro:prop:imaginaire} et \ref{intro:prop:e3}.
Les notations sont celles du paragraphe \ref{sec:resultat}.

\begin{proposition}
\label{dexp:prop}
Soit $\alpha\in K\setminus\{0\}$.  On note $E$ l'ensemble fini des places
$v$ de $K$ avec $v\mid\infty$ ou $|\alpha|_v\neq 1$. Pour
chaque place $v$ de $K$ avec $v\nmid\infty$, on pose
$B_v=\min\big\{1,p^{1/(p-1)}|\alpha|_v\big\}$ o\`u $p$ est
le nombre premier tel que $v\mid p$.  On pose encore
\[
 g=\sum_{v\in E} \frac{d_v}{d}
 \et
 B=\prod_{v\nmid \infty} B_v^{-d_v/d}.
\]
Enfin, pour tout entier positif $n\in\bN_+$, on note $\tcC_n$
le convexe ad\'elique de $K^2$ dont les compo\-santes $\tcC_{n,v}$
sont d\'efinies comme suit.
\begin{itemize}
\item[(i)] Si $v\mid\infty$, alors $\tcC_{n,v}$ est l'ensemble des points
$(x,y)\in K_v^2$ tels que
\[
 |x|_v \le n^{g-1}\frac{B^n(2n)!}{|\alpha|_v^n\,n!}
 \et
 |xe^\alpha-y|_v \le n^{g}\frac{B^n|\alpha|_v^n}{4^n\,n!}.
\]
\item[(ii)] Si $v\mid p$ pour un nombre premier $p$ et si
$|\alpha|_v<p^{-1/(p-1)}$, alors $\tcC_{n,v}$ consiste des
points $(x,y)\in K_v^2$ tels que
\[
 |x|_v \le 1
 \et
 |xe^\alpha-y|_v \le B_v^{2n}.
\]
\item[(iii)] Si $v\mid p$ pour un nombre premier $p$ et si
$|\alpha|_v\ge p^{-1/(p-1)}$, alors $\tcC_{n,v}=\cO_v^2$.
\end{itemize}
Alors on a
\begin{equation}
\label{dexp:prop:eq1}
 c_4 n^{-2g+1} \le \lambda_1(\tcC_n)\le \lambda_2(\tcC_n) \le c_3
\end{equation}
pour des constantes $c_3,c_4>0$ ind\'ependantes de $n$.
\end{proposition}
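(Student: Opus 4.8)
The plan is to establish the two inequalities separately. For the right‑hand one, $\lambda_2(\tcC_n)\le c_3$, I will place two $K$‑linearly independent points of $K^2$ inside $c_3\tcC_n$; for the left‑hand one, $\lambda_1(\tcC_n)\ge c_4\,n^{1-2g}$, I will bound $\mu(\tcC_n)$ from above and apply the adelic Minkowski theorem \ref{res:thm:MBV}. The two points will be $\lambda\,a_{(n,n)-\ue_1}$ and $\lambda\,a_{(n,n)-\ue_2}$, where $a_{(n,n)-\ue_\ell}$ is the Hermite approximation of order $(n,n)-\ue_\ell$ attached to $(\alpha_1,\alpha_2)=(0,\alpha)$ and $\lambda\in K^\times$ is to be chosen; they are $K$‑linearly independent because Mahler's theorem \ref{res:thm:Mahler} gives $\det\bigl(a_{(n,n)-\ue_1},a_{(n,n)-\ue_2}\bigr)=\Delta_{(n,n)}=((n-1)!)^2(-1)^n\alpha^{2n}\neq0$. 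Only the integral formulas of $\S\ref{subsec:approx:Hermite}$ and lemma \ref{estultra:lemme1} are needed for the estimates below.

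For the volume, observe that at every place $v$ the $K_v$‑linear map $(x,y)\mapsto(x,xe^\alpha-y)$ has determinant $-1$, and that $|e^\alpha|_v=1$ at any ultrametric $v$ at which $e^\alpha$ is defined; hence at $v\mid\infty$ this map preserves $\mu_v$ and at $v\nmid\infty$ it is $\cO_v$‑unimodular, so $\tcC_{n,v}$ is, through it, a product of two balls. This yields $\mu_v(\tcC_{n,v})^{1/d_v}$ equal to $4\,A_vC_v$ or $\pi\,A_vC_v$ according as $v$ is real or complex, where $A_v,C_v$ are the two bounds in (i) and $A_vC_v=n^{2g-1}B^{2n}(2n)!/(4^n(n!)^2)$ is independent of $v$, and $\mu_v(\tcC_{n,v})^{1/d_v}\le B_v^{2n}$ at $v\nmid\infty$. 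Multiplying over all $v$ and using $\prod_{v\nmid\infty}B_v^{d_v}=B^{-d}$ to cancel the factor $B^{2nd}$ coming from the archimedean places, I get $\mu(\tcC_n)^{1/d}\le(4^{r_1}\pi^{2r_2})^{1/d}\,n^{2g-1}\binom{2n}{n}4^{-n}\le c_7\,n^{2g-1}$. Granting $\lambda_2(\tcC_n)\le c_3$, theorem \ref{res:thm:MBV} with $s=2$ gives $(\lambda_1\lambda_2)^d\mu(\tcC_n)\ge 2^{2r_1}(2!)^{-d}$, whence $\lambda_1(\tcC_n)\ge c_4\,n^{1-2g}$.

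Next I unwind the membership $\lambda\,a_{(n,n)-\ue_\ell}\in c_3\tcC_n$, place by place, into inequalities $|\lambda|_v\le t_v$. Write $\un'=(n,n)-\ue_\ell$ and $P=P_{\un'}$. At $v\mid\infty$ the condition is $|\lambda|_v|P(\alpha_i)|_v\le c_3A_v$ and $|\lambda|_v|P(0)e^\alpha-P(\alpha)|_v\le c_3C_v$; here $\S\ref{subsec:approx:Hermite}$ gives $|P(\alpha_i)|_v\le e^{R_v}(2n-1)!$, and for the difference I parametrize the segment $[0,\alpha]$ in $P(0)e^\alpha-P(\alpha)=\int_0^\alpha f_{\un'}(z)e^{\alpha-z}\dz$, obtaining $|P(0)e^\alpha-P(\alpha)|_v\le|\alpha|_v^{2n}e^{R_v}\int_0^1 t^{n_1}(1-t)^{n_2}\dt=|\alpha|_v^{2n}e^{R_v}(n-1)!\,n!/(2n)!$; the beta integral is where $4^{-n}$ appears, via $(n-1)!\,n!/(2n)!\le 3\cdot4^{-n}$. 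At $v\notin E$, lemma \ref{estultra:lemme1} gives $|P(\alpha_i)|_v\le|(n-1)!|_v$, so one takes $t_v=|(n-1)!|_v^{-1}$. At $v\in E$, with $\delta=p^{-1/(p-1)}$: if $|\alpha|_v<\delta$, lemma \ref{estultra:lemme1} bounds $|P(\alpha_i)|_v\le p^2(2n-1)\delta^{2n-1}$ and $|P(0)e^\alpha-P(\alpha)|_v\le p^2(2n-1)\delta^{-1}|\alpha|_v^{2n}$, so one takes $t_v=\delta^{1-2n}/(p^2(2n-1))$; if $|\alpha|_v>\delta$, it bounds $|P(\alpha_i)|_v\le p^2(2n-1)\delta^{n-1}|\alpha|_v^{n}$, so one takes $t_v=\delta^{1-n}|\alpha|_v^{-n}/(p^2(2n-1))$. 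In every case $t_v=1$ for almost all $v$.

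Such a $\lambda$ exists provided $\prod_v t_v^{d_v}\ge 2^{r_1+r_2}|D(K)|^{1/2}$: applying theorem \ref{res:thm:MBV} with $s=1$ to the adelic convex body $\cD_n=\prod_v\{x\in K_v:|x|_v\le t_v\}$ (legitimate since $t_v=1$ for almost all $v$) then gives $\lambda_1(\cD_n)\le1$, i.e.\ a nonzero $\lambda\in K$ with $|\lambda|_v\le t_v$ for all $v$; and since the archimedean $t_v$ are proportional to $c_3$, enlarging $c_3$ multiplies $\prod_v t_v^{d_v}$ by the same power of $c_3$. Verifying this inequality is the heart of the argument, and is bookkeeping with the product formula for $\alpha$ and for $(n-1)!$. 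The exponential‑in‑$n$ parts cancel: the block $B^{nd}\bigl(\prod_{v\mid\infty}|\alpha|_v^{d_v}\bigr)^{-n}$ sitting in the archimedean $t_v$ equals $\prod_{v\in E,\,v\nmid\infty}(|\alpha|_v/B_v)^{nd_v}$, where $|\alpha|_v/B_v$ is $\delta=p^{-1/(p-1)}$ when $|\alpha|_v<\delta$ and is $|\alpha|_v$ when $|\alpha|_v>\delta$; the $|\alpha|_v^{\,n}$ here kill the $|\alpha|_v^{-n}$ in the corresponding $t_v$, while the $\delta^{\,n}$, the $\delta^{1-2n}$ of the other case, and $\prod_{v\nmid\infty}|(n-1)!|_v^{d_v}=(n-1)!^{-d}$ together produce $\prod_{v\in E,\,v\nmid\infty}\bigl(\delta^{1-n}|(n-1)!|_v\bigr)^{d_v}$, which is $\ge1$ because $|(n-1)!|_v\ge\delta^{\,n-1}$ by \eqref{estultra:eq:delta}. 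What is left is $c_3^d$ times a polynomial in $n$: the factor $n^{gd}$ from the $n^{g}$ in $A_v,C_v$, offset by the $\binom{2n}{n}4^{-n}\ge(3n)^{-1}$ from the archimedean estimate and by the $(p^2(2n-1))^{-1}$ from lemma \ref{estultra:lemme1} at each of the $(g-1)d$ ultrametric places of $E$; since $gd-d-(g-1)d=0$ these balance, so $\prod_v t_v^{d_v}\ge c_3^d\,c$ for a constant $c>0$ uniform in $n\ge1$. Taking $c_3$ large makes this exceed the Minkowski constant, which gives $\lambda_2(\tcC_n)\le c_3$ and completes the proof. The one delicate step is this final cancellation: one must track the polynomial factors precisely enough to see that the losses in lemma \ref{estultra:lemme1} are exactly matched by the power of $n$ built into $A_v$ and $C_v$, with nothing left over.
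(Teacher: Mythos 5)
Your proposal is correct and follows essentially the same route as the paper: one finds a scalar $\lambda\in K^\times$ via the $s=1$ case of Theorem \ref{res:thm:MBV} applied to an adelic box whose radii encode the place-by-place estimates of \S\ref{subsec:approx:Hermite} and Lemma \ref{estultra:lemme1} (the paper phrases this as $a\,\cC_{(n,n)}\subseteq\tcC_n$ for an id\`ele $a$ and then rescales by $\beta$, which is the same bookkeeping with the product formula for $\alpha^n(n-1)!$), and the lower bound on $\lambda_1$ comes, as in the paper, from the bound $\mu(\tcC_n)^{1/d}\ll n^{2g-1}$ together with the $s=2$ Minkowski theorem. Your only deviations are cosmetic (working directly with the points $a_{(n,n)-\ue_\ell}$ instead of through $\cC_{(n,n)}$, and using the beta integral rather than the sup of $t^{n-1}(1-t)^{n-1}$), plus a harmless gloss in identifying $\mu(\cD_n)$ with $\prod_v t_v^{d_v}$, which only costs a constant absorbed into $c_3$.
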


\begin{proof}[D\'emonstration]
Soit $n\in\bN_+$. On consid\`ere le convexe ad\'elique
$\cC_\un$ construit au paragraphe \ref{subsec:res} pour le
choix de $\alpha_1=0$, $\alpha_2=\alpha$ et $\un=(n,n)$.  Pour une
place archim\'edienne $v$ de $K$ associ\'ee \`a un plongement
$\sigma\colon K\hookrightarrow\bC$ et pour $k=1,2$, on trouve
\begin{align*}
 \left|\int_0^{\sigma(\alpha)}
       f_{\un-\ue_k}^\sigma(z)e^{\sigma(\alpha)-z} dz \right|
 &\le |\sigma(\alpha)| e^{|\sigma(\alpha)|}
      \max_{t\in[0,1]}
        \left|f_{\un-\ue_k}^\sigma(\sigma(\alpha)t)\right|\\
 &\le e^{|\sigma(\alpha)|} |\sigma(\alpha)|^{2n}
      \max_{t\in[0,1]} t^{n-1}(1-t)^{n-1}\\
 &= 4 e^{|\alpha|_v} (|\alpha|_v/2)^{2n}.
\end{align*}
Donc les points $(x,y)$ de $\cC_{\un,v}$ satisfont
\[
 |x|_v\le e^{|\alpha|_v}(2n-1)!
 \et
 |xe^\alpha-y|_v\le 4e^{|\alpha|_v}(|\alpha|_v/2)^{2n}.
\]
On en d\'eduit que $a_v\cC_{\un,v} \subseteq \tcC_{n,v}$
en posant
\[
 a_v=\frac{n^{g-1}B^n}{4e^{|\alpha|_v} \alpha^n (n-1)!}
    \in K_v^\times.
\]
Pour tout nombre premier $p$ et toute place $v$ de $K$
avec $v\mid p$, on trouve aussi que
$a_v\cC_{\un,v} \subseteq \tcC_{n,v}$ en posant
\[
 a_v=\frac{p^{t_v}}{\alpha^n(n-1)!} \in K_v^\times
\]
o\`u $t_v$ est l'entier pour lequel
\[
 2np^3B_v^{-n}\le p^{t_v} < 2np^4B_v^{-n}
\]
si $v\in E$ et o\`u $t_v=0$ sinon.  Ce calcul utilise
simplement le fait que
$|(n-1)!|_v\ge |n!|_v\ge p^{-n/(p-1)}$.  Ainsi on
obtient $a\,\cC_\un\subseteq\tcC_n$ pour l'id\`ele
$a=(a_v)_v \in K_\bA^\times$.

Le produit $\cD=\prod_v \{x\in K_v\,;\, |x|_v\le |a_v|_v\}
\subset K_\bA$
est un convexe ad\'elique de $K$.  En appliquant la formule
du produit \`a l'id\`ele principale $\alpha^n(n-1)!\in K^\times$,
on trouve que le volume de $\cD$ est
\[
 \mu(\cD) = 2^{r_1}\pi^{r_2}\prod_v |a_v|_v^{d_v}
   = 2^{r_1}\pi^{r_2}
     \prod_{v\mid\infty}
       \Big(\frac{n^{g-1}B^n}{4e^{|\alpha|_v}}\Big)^{d_v}
     \prod_{p,\,v\mid p} p^{-t_vd_v}.
\]
Comme $\prod_{v\mid\infty} B^{d_v} = B^d =
\prod_{v\nmid\infty}B_v^{-d_v}$, cette formule se r\'e\'ecrit
\[
 \mu(\cD) = c_1 n^{d(g-1)} \prod_{p,\,v\mid p} (p^{t_v}B_v^n)^{-d_v},
\]
avec $c_1=2^{r_1}\pi^{r_2} \prod_{v\mid\infty} (4e^{|\alpha|_v})^{-d_v}$.
Comme $p^{t_v}B_v^n=1$ si $v\notin E$ et que $p^{t_v}B_v^n<2np^4$ si
$v\in E$ avec $v\mid p$, on en tire
\[
 \mu(\cD)
   \ge c_2 n^{d(g-1)} \prod_{v\in E'} n^{-d_v}
   = c_2 n^{dg} \prod_{v\in E} n^{-d_v}
   = c_2,
\]
o\`u $E'=\{v\in E\,;\,v\nmid\infty\}$ et
$c_2=c_1\prod_{v\in E',\, v\mid p}(2p^4)^{-d_v}$.
En vertu du th\'eor\`eme \ref{res:thm:MBV} (avec $s=1$), on a donc
$\lambda_1(\cD)\le c_3$ o\`u $c_3=(2^{r_1+r_2}|D(K)|^{1/2}c_2^{-1})^{1/d}$.
Autrement dit, il existe $\beta\in K^\times$ tel que, pour toute
place $v$ de $K$ on ait $|\beta|_v\le c_3|a_v|_v$ si $v\mid\infty$
et $|\beta|_v\le |a_v|_v$ sinon.  Par suite, on obtient
\[
 \beta\cC_\un\subseteq c_3\tcC_n,
\]
ce qui entra\^{\i}ne aussit\^ot
\[
 \lambda_1(\tcC_n)\le \lambda_2(\tcC_n)\le c_3
\]
car $\beta\cC_\un$ contient les points $\beta\ua_{\un-\ue_1},\beta\ua_{\un-\ue_2}
\in K^2$ qui sont lin\'eairement ind\'ependants sur $K$.
En vertu du th\'eor\`eme \ref{res:thm:MBV} (avec $s=2$), cela implique
\[
 \lambda_1(\tcC_n)
   \ge (2c_3)^{-1}\mu(c_3\tcC_n)^{-1/d}
     = (2c_3^2)^{-1}\mu(\tcC_n)^{-1/d}.
\]
Enfin, pour toute place $v$ de $K$, on trouve
\[
 \mu_v(\tcC_{n,v})^{1/d_v}
   \le \begin{cases}
         4n^{2g-1}B^{2n}
         &\text{si $v\mid\infty$,}\\
         B_v^{2n}
         &\text{sinon.}
        \end{cases}
\]
Comme $B^d\prod_{v\nmid\infty}B_v^{d_v}=1$, cela implique que
$\mu(\tcC_n)^{1/d}\le 4n^{2g-1}$, et on obtient
\eqref{dexp:prop:eq1} avec $c_4=(8c_3^2)^{-1}$.
\end{proof}

\begin{proof}[D\'emonstration de la proposition
\ref{intro:prop:imaginaire}]
Sous les hypoth\`eses de cette proposition, le corps
$K$ poss\`ede une seule place archim\'edienne $\infty$,
induite par l'inclusion
$K\subset\bC$.  De plus, dans les notations de la
proposition \ref{dexp:prop}, le choix de $\alpha$
entra\^{\i}ne $B_v=1$ pour toute autre place $v$ de
$K$, donc, pour tout $n\in\bN_+$, on a
\[
 \tcC_n=\tcC_{n,\infty}\times\prod_{v\neq\infty}\cO_v^2,
\]
o\`u $\tcC_{n,\infty}$ est l'ensemble des points $(x,y)$
de $K_\infty^2\subseteq\bC^2$ qui satisfont
\[
 |x| \le n^{g-1}\frac{(2n)!}{|\alpha|^n\,n!}
 \et
 |xe^\alpha-y| \le n^{g}\frac{|\alpha|^n}{4^n\,n!}.
\]
De plus, en vertu de \eqref{dexp:prop:eq1}, on a
$\lambda_1(\tcC_n)\ge c_4n^{-2g+1}$ pour une constante
$c_4>0$ ind\'ependante de $n$.

Soit $(x,y)\in\cO_K^2$ avec $x\neq 0$.    Ce qui pr\'ec\`ede
implique que, pour tout $n\in\bN_+$,
\[
 \text{si} \quad |x| < h(n) := c_4 n^{-g}\frac{(2n)!}{|\alpha|^n\,n!}
 \quad\text{alors}\quad
 |xe^\alpha-y| \ge c_4 n^{-g+1}\frac{|\alpha|^n}{4^n\,n!} \,.
\]
Si $|x|$ est assez grand, on peut \'ecrire $h(n-1)\le |x|<h(n)$ pour un
entier $n\ge 2$ avec $h(n-1)\ge e^n$.  Alors on a $n\le \log |x|$ et on obtient
\begin{align*}
 |x|\,|xe^\alpha-y|
   &\ge h(n-1)c_4 n^{-g+1}\frac{|\alpha|^n}{4^n\,n!} \\
    &\ge c_4^2|\alpha| n^{-2g}\binom{2n-2}{n-1}4^{-n}
      \ge c_5 n^{-2g-1}
      \ge c_5 (\log |x|)^{-2g-1},
\end{align*}
avec $c_5=c_4^2|\alpha|/8$.  Comme $\cO_K$ est un sous-ensemble discret
de $\bC$, cela laisse de c\^ot\'e un nombre fini de valeurs de $x$ et
pour les inclure dans la minoration finale il suffit
de remplacer $c_5$ par une constante $c>0$ suffisamment petite.
\end{proof}

\begin{proof}[D\'emonstration de la proposition
\ref{intro:prop:e3}]
On applique la proposition \ref{dexp:prop} avec $K=\bQ$ et
$\alpha=3$.  Dans ce cas, on a $g=2$ et $B=B_3^{-1}=3^{1/2}$.
Alors, pour $n\in\bN_+$, un calcul simple montre que la
composante archim\'edienne $\tcC_{n,\infty}$ du convexe
ad\'elique $\tcC_n$ satisfait
\begin{equation}
 \label{dexp:e3:eq1}
 n\cC_n \subseteq \tcC_{n,\infty}\subseteq n^2\cC_n,
\end{equation}
o\`u $\cC_n$ est le corps convexe de $\bR^2$ d\'efini dans la proposition
\ref{intro:prop:e3}.  Pour les composantes ultra\-m\'e\-triques, on trouve
\[
 \tcC_{n,3}=\{(x,y)\in\bZ_3\,;\, |xe^3-y|_3\le 3^{-n}\}
\]
et $\tcC_{n,p}=\bZ_p^2$ pour tout nombre premier $p$ avec
$p\neq 3$.   Donc les points de $\bQ^2$ qui appartiennent \`a
chacune de ces composantes sont pr\'ecis\'ement les points
du r\'eseau $\Lambda_n$ de la proposition \ref{intro:prop:e3}.  Par suite, les
minima de $\tcC_n$ par rapport \`a $\bQ^2$ au sens ad\'elique sont
aussi les minima de $\tcC_{n,\infty}$ par rapport \`a $\Lambda_n$
au sens classique.  En vertu des inclusions \eqref{dexp:e3:eq1}, cela
implique que $c_4n^{-2}\le \lambda_1(\cC_n,\Lambda)\le
\lambda_2(\cC_n,\Lambda)\le c_3n^2$ pour les constantes $c_3$ et $c_4$
donn\'ees par la proposi\-tion \ref{dexp:prop}.
\end{proof}

%
%

\section{Calculs num\'eriques}
\label{sec:num}

Les formules de
l'appendice \ref{sec:rel} permettent de calculer r\'ecursi\-ve\-ment
les approximations $\ua_{n-1,n}$ et $\ua_{n,n-1}$
de $(1,e^3)$ pour chaque entier $n\ge 1$.  Dans ce paragraphe,
on montre comment ces derni\`eres permettent \`a leur tour de calculer
efficacement les quotients partiels du d\'eveloppement en
fraction continue de $e^3\in\bR$, puis de v\'erifier
les in\'egalit\'es \eqref{intro:eq:loglog} de l'introduction.
Notre r\'ef\'erence pour les fractions continues est
\cite[Ch.~I]{Sc1980}.

Soit $e^3=[a_0,a_1,a_2,\dots]$ le d\'eveloppement en fraction
continue de $e^3$.  Ses premiers termes sont
\[
 e^3=[20, 11, 1, 2, 4, 3, 1, 5, 1, 2, 16,\dots ],
\]
sans r\'egularit\'e apparente.  Pour chaque entier $n\ge 0$,
on forme la $n$-i\`eme r\'eduite de $e^3$
\[
 \frac{p_n}{q_n}=[a_0,a_1,\dots,a_n]
\]
avec $p_n\in\bZ$, $q_n\in\bN$ et $\gcd(p_n,q_n)=1$.
Le tableau ci-dessous liste les entiers $n\ge 1$ avec
$q_{n-1}\le 10^{500\,000}$ pour lesquels
\[
 a_n=\max\{a_1,\dots,a_n\}.
\]
Pour chacun de ces entiers, il donne la valeur correspondante
de $a_n$ et une approximation de $\log(q_{n-1})$ tronqu\'ee
\`a la premi\`ere d\'ecimale.

\[
\begin{array}{r|ccccccccccc}
 n&1&10&31&87&133&211&244&388&2708&8055\\[2pt]
 \hline\\[-10pt]
 a_n&11&16&68&189&492&739&2566&5885&6384&10409\\[2pt]
 \log(q_{n-1})& 0.0& 9.4& 34.5& 97.9& 151.1& 256.6& 297.6&
 475.0& 3307.2& 9614.8
 \end{array}
\]

\[
\begin{array}{r|ccccccc}
 n&9437&29508&30939&43482&91737&196440&476544\\[2pt]
 \hline\\[-10pt]
 a_n&19362&21981&46602&51140&315466&546341&569869\\[2pt]
 \log(q_{n-1})&11258.4& 34996.8& 36750.6& 51515.4& 109063.1& 233261.9& 566111.1
\end{array}
\]

Il est facile d'en d\'eduire les estimations
\eqref{intro:eq:loglog}.  En effet, posons $\psi(x)=3\log(x)\log(\log(x))$
pour tout $x\ge e$.  Pour chaque couple $(p,q)\in\bZ^2$ avec $q\ge 1$,
il existe un entier $n\ge 1$ tel que $q_{n-1} \le q < q_n$.  En vertu
d'un th\'eor\`eme de Lagrange \cite[Ch.~I, Thm.~5E]{Sc1980}, on a
\[
 |qe^3-p|
   \ge |q_{n-1}e^3-p_{n-1}|
   \ge \frac{1}{q_n+q_{n-1}}
   \ge \frac{1}{(a_n+2)q_{n-1}}.
\]
En supposant $q\ge 3$, on en d\'eduit
\begin{equation}
 \label{num:eq1}
 \psi(q)q\,|qe^3-p|
   \ge \frac{\psi(q_{n-1})}{a_n+2}.
\end{equation}
On v\'erifie sans peine que le membre de droite de
\eqref{num:eq1} est $\ge 1$ pour tous les entiers
$n$ de la table avec $n\ge 10$, donc il l'est aussi
pour tout entier $n\ge 10$ avec $q_{n-1}\le 10^{500\,000}$.
Un calcul rapide montre que c'est
encore vrai pour $n=2,\dots,9$.  Donc le membre de
gauche de \eqref{num:eq1} est $\ge 1$ si $11\le q\le 10^{500\,000}$.
Enfin, on v\'erifie que c'est encore le cas si
$4\le q\le 10$.

Pour calculer les quotients partiels $a_n$, on pose
\[
 C_n=\begin{pmatrix} 2n-4 &2n-1\\ 2n-1 &2n+2 \end{pmatrix}
 \et
 A_n=C_n\cdots C_1
\]
pour chaque entier $n\ge 1$.  Le corollaire \ref{rel:cor2}
de l'appendice montre que les lignes de $(n-1)!A_n$ sont
les approximations d'Hermite $\ua_{n-1,n}$ et $\ua_{n,n-1}$
de $(1,e^3)$ pour tout $n\in\bN_+$, donc on a
\begin{equation}
 \label{num:eq2}
 \lim_{n\to\infty} A_n\begin{pmatrix} e^3\\-1\end{pmatrix}
 = \begin{pmatrix} 0\\0\end{pmatrix}\,.
\end{equation}
On note aussi que, pour tout $n\ge 2$, les matrices
$C_n$ et $A_n$ appartiennent \`a l'ensemble
\[
 \cM
  =\left\{
    \begin{pmatrix}t&u\\t'&u'\end{pmatrix}\in\Mat_{2\times 2}(\bZ)
    \,;\, 0\le t<u,\ 0\le t'<u' \et tu'\neq t'u
   \right\}.
\]
C'est clair pour les $C_n$.  Quant aux $A_n$, cela d\'ecoule du
fait que l'ensemble $\cM$ est ferm\'e sous le produit.

En g\'en\'eral, si $A=\begin{pmatrix}t&u\\t'&u'\end{pmatrix}
\in\cM$, les nombres rationnels $t/u$ et $t'/u'$ admettent
des d\'eveloppements en fractions continues
\[
 \frac{t}{u}=[a_0,a_1,\dots,a_\ell]
 \et
 \frac{t'}{u'}=[a'_0,a'_1,\dots,a'_{\ell'}]
\]
avec $a_0=a'_0=0$ et $a_\ell,a'_{\ell'}\ge 2$.  Soit
$(a_0,\dots,a_k)$ la partie initiale commune des suites
$(a_0,\dots,a_\ell)$ et $(a'_0,\dots,a'_{\ell'})$.  Si $k=0$,
ce qui a lieu si $t=0$ ou $t'=0$ ou
$\lfloor u/t\rfloor\neq \lfloor u'/t'\rfloor$,
on dit que $A$ est \emph{r\'eduite}. Sinon, on v\'erifie
sans peine que
\[
 A = R\begin{pmatrix}0&1\\1&a_k\end{pmatrix}
     \cdots\begin{pmatrix}0&1\\1&a_1\end{pmatrix}
\]
o\`u $R\in\cM$ est r\'eduite. Si $k=0$, cette formule tient
encore en convenant que le produit de droite est $R$.
En particulier, pour chaque $n\ge 2$, on peut \'ecrire
\[
 A_n=R_n\begin{pmatrix}0&1\\1&a_{k(n)}\end{pmatrix}
     \cdots\begin{pmatrix}0&1\\1&a_1\end{pmatrix}
\]
pour une matrice r\'eduite $R_n\in\cM$, des entiers
$0\le k(1)\le k(2)\le \cdots$ et des entiers positifs
$a_1,a_2,\dots$ tels que
\begin{equation}
 \label{rel:eq:recurrence}
 C_{n+1}R_n=R_{n+1}\begin{pmatrix}0&1\\1&a_{k(n+1)}\end{pmatrix}
     \cdots\begin{pmatrix}0&1\\1&a_{k(n)+1}\end{pmatrix},
\end{equation}
avec la convention que le produit de droite est $R_{n+1}$
si $k(n+1)=k(n)$.  Gr\^ace \`a \eqref{num:eq2}, la suite
des $k(n)$ est non born\'ee et on conclut que
\[
 e^{-3}=[0,a_1,a_2,\dots]
 \et
 e^3=[a_1,a_2,\dots]
\]
sont les d\'eveloppements en fractions continues respectifs
de $e^{-3}$ et de $e^3$.  Pour le calcul des $a_n$, il suffit
donc de construire r\'ecursivement les matrices $R_n$ dont
la norme est en pratique beaucoup plus petite que celle de $A_n$
(on peut m\^eme \`a chaque pas extraire les puissances de $3$
qui divisent $R_n$).  Pour les entiers $q_n$, on sauve aussi
du temps de calcul en ne conservant que leur repr\'esentation
en point flottant (nous avons utilis\'e 10 chiffres significatifs).
Il faut alors un tout petit plus d'une
heure de temps CPU pour produire les tables sous le logiciel
MAPLE avec un processeur intel i5 de 64 bits.

%
%

\appendix

\section{Relations de r\'ecurrence}
\label{sec:rel}

On reprend les notations du paragraphe \ref{subsec:approx:Hermite}
et on \'etend la d\'efinition de $f_\un(z)$, $P_\un(z)$ et
$a_\un$ \`a tout $s$-uplet $\un\in\bZ^s$ en posant
\[
 f_\un(z)=P_\un(z)=0
 \et
 a_\un=(0,\dots,0)
 \quad\text{si}\quad
 \un\notin\bN^s.
\]
Pour tout $\un\in\bN_+^s$, on note $A_\un$ la matrice dont la
$\ell$-i\`eme ligne est $\ua_{\un-\ue_\ell}$ pour $\ell=1,\dots,s$.
Dans \cite[\S\S IX-X]{He1873}, Hermite \'etablit une formule de 
r\'ecurrence liant $A_{\un+\uun}$ et $A_\un$ o\`u $\uun$ d\'esigne
le $s$-uplet $(1,\dots,1)$. On donne ici des formules 
de r\'ecurrence plus g\'en\'erales bas\'ees sur le m\^eme principe.  
La relation \eqref{rel:eq1} ci-dessous est d\^ue \`a Hermite 
\cite[\S IX, p.~230]{He1873} dans le cas o\`u $\un\in\bN_+^s$.

\begin{proposition}
\label{rel:prop}
Soit $\un=(n_1,\dots,n_s)\in\bN^s$. On a
\begin{equation}
 \label{rel:eq1}
 a_\un = (f_\un(\alpha_1),\dots,f_\un(\alpha_s))
          + \sum_{j=1}^s n_ja_{\un-\ue_j}\,.\\
\end{equation}
De plus, si $k,\ell\in\{1,\dots,s\}$ avec $n_k\ge 1$,
on a aussi
\begin{equation}
 \label{rel:eq2}
 a_{\un+\ue_\ell-\ue_k}
   = a_\un + (\alpha_k-\alpha_\ell)a_{\un-\ue_k}.
\end{equation}
\end{proposition}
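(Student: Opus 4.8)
The plan is to prove polynomial identities for $P_\un(z)$ from which the two vector identities follow by evaluating at $\alpha_1,\dots,\alpha_s$. For this it is convenient to introduce, for any polynomial $g\in K[z]$, the finite sum $E(g)=\sum_{k\ge 0} g^{(k)}$ of all its derivatives, so that $P_\un=E(f_\un)$ for every $\un\in\bZ^s$ (with the convention $f_\un=0$ when $\un\notin\bN^s$). The operator $E$ is $K$-linear, and reindexing the sum gives the single structural property we will use:
\[
 E(g')=E(g)-g.
\]

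For \eqref{rel:eq1} I would start from the product rule applied to $f_\un(z)=\prod_{k=1}^s(z-\alpha_k)^{n_k}$, namely $f_\un'=\sum_{j=1}^s n_j f_{\un-\ue_j}$, where the summands with $n_j=0$ vanish on both sides (on the right because $f_{\un-\ue_j}=0$ by convention). Applying $E$, using linearity and $E(f_{\un-\ue_j})=P_{\un-\ue_j}$ on one hand, and $E(f_\un')=E(f_\un)-f_\un=P_\un-f_\un$ on the other, I get the polynomial identity $P_\un=f_\un+\sum_{j=1}^s n_j P_{\un-\ue_j}$. Evaluating at $\alpha_1,\dots,\alpha_s$ yields \eqref{rel:eq1}.

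For \eqref{rel:eq2} I may assume $\ell\ne k$, the case $\ell=k$ being trivial. Since $n_k\ge 1$, both $\un-\ue_k$ and $\un+\ue_\ell-\ue_k$ lie in $\bN^s$, and the factorizations $f_\un=(z-\alpha_k)f_{\un-\ue_k}$ and $f_{\un+\ue_\ell-\ue_k}=(z-\alpha_\ell)f_{\un-\ue_k}$ give
\[
 f_{\un+\ue_\ell-\ue_k}-f_\un=(\alpha_k-\alpha_\ell)\,f_{\un-\ue_k}.
\]
Applying $E$ turns this into $P_{\un+\ue_\ell-\ue_k}-P_\un=(\alpha_k-\alpha_\ell)P_{\un-\ue_k}$, and evaluating at $\alpha_1,\dots,\alpha_s$ gives \eqref{rel:eq2}.

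I expect no real obstacle: once the operator $E$ is set up, each identity is a two-line computation. The only points demanding care are purely formal bookkeeping with the conventions $f_\un=P_\un=a_\un=0$ for $\un\notin\bN^s$ — one must verify that the degenerate terms ($n_j=0$ in \eqref{rel:eq1}; $\ell=k$ or $n_k=0$ in \eqref{rel:eq2}) are consistent with the stated formulas — and a sanity check on small cases such as $\un=(0,\dots,0)$ in \eqref{rel:eq1}, where both sides equal $(1,\dots,1)$.
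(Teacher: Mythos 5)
Your proof is correct and follows essentially the same route as the paper: the product rule giving $f_\un'=\sum_j n_j f_{\un-\ue_j}$, the factorization identity $f_{\un+\ue_\ell-\ue_k}-f_\un=(\alpha_k-\alpha_\ell)f_{\un-\ue_k}$, and then summing all derivatives (your operator $E$ with $E(g')=E(g)-g$ is exactly the paper's ``prendre la somme des d\'eriv\'ees'', made explicit) before evaluating at $\alpha_1,\dots,\alpha_s$. The extra care you take with the conventions for $n_j=0$ and $\ell=k$ matches the paper's implicit treatment; no changes are needed.
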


\begin{proof}[D\'emonstration]
La formule pour la d\'eriv\'ee d'un produit donne
\[
 f_\un'(z)=\sum_{j=1}^s n_j f_{\un-\ue_j}(z).
\]
En prenant la somme des d\'eriv\'ees des deux membres
de cette \'egalit\'e, on obtient
\[
 P_\un(z)=f_\un(z)+\sum_{j=1}^s n_j P_{\un-\ue_j}(z)
\]
et \eqref{rel:eq1} s'ensuit.  La formule \eqref{rel:eq2}
est imm\'ediate si $k=\ell$.
Supposons $k\neq \ell$ et $n_k\ge 1$ de sorte que
$\un-\ue_k\in\bN^s$. Alors on a
\[
 f_{\un+\ue_\ell-\ue_k}(z) - f_\un(z)
   = (z-\alpha_\ell)f_{\un-\ue_k}(z)-(z-\alpha_k)f_{\un-\ue_k}(z)
   = (\alpha_k-\alpha_\ell)f_{\un-\ue_k}(z).
\]
En prenant de nouveau la somme des d\'eriv\'ees, on obtient
\[
 P_{\un+\ue_\ell-\ue_k}(z)
   = P_\un(z) + (\alpha_k-\alpha_\ell)P_{\un-\ue_k}(z)
\]
et \eqref{rel:eq2} s'ensuit.
\end{proof}

\begin{cor}
\label{rel:cor1}
Soient $\un=(n_1,\dots,n_s)\in \bN_+^s$ et $\ell\in\{1,\dots,s\}$.
Alors, on a
\[
 A_{\un+\ue_\ell} = M_{\un,\ell} A_\un
\]
o\`u
\[
 M_{\un,\ell}
   =\begin{pmatrix}
     n_1+(\alpha_1-\alpha_\ell) &n_2 &\cdots &n_s\\
     n_1 &n_2+(\alpha_2-\alpha_\ell) &\cdots &n_s\\
     \vdots &\vdots&\ddots&\vdots\\
     n_1 &n_2 &\cdots &n_s+(\alpha_s-\alpha_\ell)
    \end{pmatrix}.
\]
\end{cor}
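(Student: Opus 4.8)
The plan is to compute each row of $A_{\un+\ue_\ell}$ separately and recognize it as the corresponding row of $M_{\un,\ell}A_\un$. By the definition in the appendix, the $k$-th row of $A_{\un+\ue_\ell}$ is $\ua_{\un+\ue_\ell-\ue_k}$ for $k=1,\dots,s$, whereas the $k$-th row of $M_{\un,\ell}A_\un$ is the linear combination $\sum_{j=1}^s (M_{\un,\ell})_{kj}\,\ua_{\un-\ue_j}$ of the rows $\ua_{\un-\ue_1},\dots,\ua_{\un-\ue_s}$ of $A_\un$. So it suffices to expand $\ua_{\un+\ue_\ell-\ue_k}$ in terms of the $\ua_{\un-\ue_j}$ and match coefficients.

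First I would record the key simplification: since $\un\in\bN_+^s$, every factor $(z-\alpha_i)$ divides $f_\un(z)$, hence $f_\un(\alpha_i)=0$ for $i=1,\dots,s$, and formula \eqref{rel:eq1} of Proposition \ref{rel:prop} collapses to
\[
 \ua_\un=\sum_{j=1}^s n_j\,\ua_{\un-\ue_j}.
\]
This already settles the case $k=\ell$: the $\ell$-th row of $A_{\un+\ue_\ell}$ is $\ua_{\un+\ue_\ell-\ue_\ell}=\ua_\un=\sum_j n_j\,\ua_{\un-\ue_j}$, which is exactly the $\ell$-th row of $M_{\un,\ell}A_\un$, since the diagonal entry $\alpha_\ell-\alpha_\ell$ vanishes and that row of $M_{\un,\ell}$ is $(n_1,\dots,n_s)$.

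For $k\neq\ell$ I would invoke \eqref{rel:eq2}, applicable because $n_k\ge 1$: it gives $\ua_{\un+\ue_\ell-\ue_k}=\ua_\un+(\alpha_k-\alpha_\ell)\,\ua_{\un-\ue_k}$. Substituting the displayed expression for $\ua_\un$ and grouping the $\ua_{\un-\ue_k}$ terms yields
\[
 \ua_{\un+\ue_\ell-\ue_k}=\sum_{j\neq k}n_j\,\ua_{\un-\ue_j}+\bigl(n_k+(\alpha_k-\alpha_\ell)\bigr)\ua_{\un-\ue_k},
\]
which is precisely the $k$-th row of $M_{\un,\ell}A_\un$. As $k$ was arbitrary, all $s$ rows agree, proving $A_{\un+\ue_\ell}=M_{\un,\ell}A_\un$. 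There is no genuine obstacle here; the only point requiring attention is that the hypothesis $\un\in\bN_+^s$ is used twice — once to kill the term $(f_\un(\alpha_1),\dots,f_\un(\alpha_s))$ in \eqref{rel:eq1}, and once (via $n_k\ge1$) to license \eqref{rel:eq2}.
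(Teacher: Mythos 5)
Your proposal is correct and follows exactly the paper's argument: since $\un\in\bN_+^s$, the term $(f_\un(\alpha_1),\dots,f_\un(\alpha_s))$ in \eqref{rel:eq1} vanishes, and combining \eqref{rel:eq1} with \eqref{rel:eq2} gives $a_{\un+\ue_\ell-\ue_k}=(\alpha_k-\alpha_\ell)a_{\un-\ue_k}+\sum_{j=1}^s n_j a_{\un-\ue_j}$, which is the $k$-th row of $M_{\un,\ell}A_\un$. Your separate treatment of $k=\ell$ is a harmless cosmetic variation (the paper subsumes it, since \eqref{rel:eq2} is trivial when $k=\ell$).
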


\begin{proof}[D\'emonstration]
Comme les coordonn\'ees de $\un$ sont
positives, le polyn\^ome $f_\un$ s'annule aux
points $\alpha_1,\dots,\alpha_s$ et les formules de
la proposition \ref{rel:prop} livrent
\[
 a_{\un+\ue_\ell-\ue_k}
   = (\alpha_k-\alpha_\ell)a_{\un-\ue_k} + \sum_{j=1}^s n_j a_{\un-\ue_j}
 \quad
 (1\le k\le s).
\qedhere
\]
\end{proof}

Dans le cas o\`u $s=2$, on en d\'eduit une formule tr\`es simple
pour les approximations diagonales $A_{n,n}$.

\begin{cor}
\label{rel:cor2}
Supposons que $s=2$, $\alpha_1=0$ et $\alpha_2=\alpha\in K\setminus\{0\}$.
Pour tout $n\in\bN_+$, on a
\begin{equation}
 \label{rel:cor2:eq}
 A_{n,n}
  = \begin{pmatrix}
      P_{n-1,n}(0) &P_{n-1,n}(\alpha)\\
      P_{n,n-1}(0) &P_{n,n-1}(\alpha)
    \end{pmatrix}
  = (n-1)! C_nC_{n-1}\cdots C_1
\end{equation}
o\`u
\[
 C_i
   =\begin{pmatrix}
     2i-1-\alpha &2i-1\\
     2i-1 &2i-1+\alpha
    \end{pmatrix}
 \quad
 (i\in\bN_+).
\]
\end{cor}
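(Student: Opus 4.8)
The plan is to derive a two-term recursion relating $A_{n+1,n+1}$ to $A_{n,n}$ from Corollaire \ref{rel:cor1}, to identify the resulting transition matrix with $n\,C_{n+1}$, and then to conclude by induction on $n$.

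To begin, the first displayed equality in \eqref{rel:cor2:eq} is nothing but the definition of $A_\un$: its $\ell$-th row is $\ua_{\un-\ue_\ell}=\big(P_{\un-\ue_\ell}(\alpha_1),P_{\un-\ue_\ell}(\alpha_2)\big)$, so for $\un=(n,n)$ with $\alpha_1=0$ and $\alpha_2=\alpha$ the rows are $(P_{n-1,n}(0),P_{n-1,n}(\alpha))$ and $(P_{n,n-1}(0),P_{n,n-1}(\alpha))$. It remains to prove the second equality. For this I would apply Corollaire \ref{rel:cor1} twice: first with $\un=(n,n)$ and $\ell=1$, which gives $A_{n+1,n}=M_{(n,n),1}A_{n,n}$, and then with $\un=(n+1,n)$ and $\ell=2$, which gives $A_{n+1,n+1}=M_{(n+1,n),2}A_{n+1,n}$ (both $s$-uplets lie in $\bN_+^2$, so the corollary applies). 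Hence $A_{n+1,n+1}=M_{(n+1,n),2}\,M_{(n,n),1}\,A_{n,n}$.

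Substituting $\alpha_1=0$ and $\alpha_2=\alpha$ into the explicit formula for $M_{\un,\ell}$ gives
\[
 M_{(n,n),1}=\begin{pmatrix} n & n\\ n & n+\alpha\end{pmatrix}
 \et
 M_{(n+1,n),2}=\begin{pmatrix} n+1-\alpha & n\\ n+1 & n\end{pmatrix},
\]
and a direct $2\times 2$ multiplication yields
\[
 M_{(n+1,n),2}\,M_{(n,n),1}
  = n\begin{pmatrix} 2n+1-\alpha & 2n+1\\ 2n+1 & 2n+1+\alpha\end{pmatrix}
  = n\,C_{n+1}.
\]
Thus $A_{n+1,n+1}=n\,C_{n+1}\,A_{n,n}$ for every $n\in\bN_+$. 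For the base case $n=1$, the identities $f_{0,1}(z)=z-\alpha$ and $f_{1,0}(z)=z$ together with the definition of $P_\un$ give $P_{0,1}(z)=z-\alpha+1$ and $P_{1,0}(z)=z+1$, hence
\[
 A_{1,1}=\begin{pmatrix} 1-\alpha & 1\\ 1 & 1+\alpha\end{pmatrix}=C_1=(1-1)!\,C_1,
\]
and an immediate induction on $n$ then gives $A_{n,n}=(n-1)!\,C_nC_{n-1}\cdots C_1$, as claimed.

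I do not anticipate a genuine obstacle here: the whole argument is the bookkeeping of the two index shifts furnished by Corollaire \ref{rel:cor1} together with one small matrix product. The only point requiring care is the order of the shifts — incrementing the first coordinate before the second — and reading off correctly which of $\alpha_1,\alpha_2$ plays the role of $\alpha_\ell$ in each factor $M_{\un,\ell}$; performing the shifts in the other order produces a different (though equally valid) factorization that does not collapse as neatly to $n\,C_{n+1}$.
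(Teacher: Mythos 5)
Your proof is correct and follows essentially the same route as the paper: two applications of Corollaire \ref{rel:cor1} to obtain $A_{n+1,n+1}=n\,C_{n+1}A_{n,n}$, together with the base case $A_{1,1}=C_1$ and induction. The only (inessential) difference is that the paper performs the shifts in the opposite order, writing $A_{n+1,n+1}=M_{(n,n+1),1}M_{(n,n),2}A_{n,n}$, and that product collapses to $n\,C_{n+1}$ just as neatly, so your closing caveat about the other order is unnecessary.
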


\begin{proof}[D\'emonstration]
On trouve que $P_{0,1}(z)=z+1-\alpha$ et $P_{1,0}(z)=z+1$,
donc $A_{1,1}=C_1$.  En g\'en\'eral, pour un entier $n\ge 1$,
les formules du corollaire \ref{rel:cor1} livrent
\[
 A_{n+1,n+1}
  = \begin{pmatrix}
     n &n+1\\
     n &n+1+\alpha
    \end{pmatrix}
    \begin{pmatrix}
     n-\alpha &n\\
     n &n
    \end{pmatrix}
    A_{n,n}
  = n C_{n+1} A_{n,n},
\]
et la conclusion suit par r\'ecurrence sur $n$.
\end{proof}

\vfill
 \vfill

\small
\vbox{
\hbox{Damien \sc Roy}\par
\hbox{D\'epartement de math\'ematiques et de statistique}\par
\hbox{Universit\'e d'Ottawa}\par
\hbox{150 Louis Pasteur}\par
\hbox{Ottawa, Ontario}\par
\hbox{Canada K1N 6N5}
}


\begin{thebibliography}{99}
\bibitem{AD1997}
    S.~Ayarai et S.~Dubuc,
    La formule de Cauchy sur la longueur d'une courbe,
    \textit{Canad.\ Math.\ Bull.}\ \textbf{40} (1997), 3--9.
\bibitem{Ba1965}
    A.~Baker,
    On some Diophantine inequalities involving the exponential function,
    \textit{Canad.\ J.\ Math.}\ \textbf{17} (1965), 616--626.
\bibitem{Ba1975}
    A.~Baker,
    \textit{Transcendental number theory},
    Cambridge University Press, London-New York, 1975, x+147 pp.
\bibitem{BV1983}
    E.~Bombieri and J.~Vaaler,
    On Siegel's Lemma,
    \textit{Invent.\ Math.}\ \textbf{73} (1983), 11--32.
\bibitem{Br1977}
    W.~D.~Brownawell,
    Some remarks on semi-resultants,
    \textit{Transcendence Theory: Advances and Applications},
    chapitre 14 (\'editeurs A.~Baker et D.~W.~Masser),
    New York, Academic Press, 1977.
\bibitem{Bu1971}
    P.~Bundschuh,
    Irrationalit\"{a}tsma{\ss}e f\"{u}r $e^a$, $a\neq 0$,
    rational oder Liouville-Zahl,
    \textit{Math.\ Ann.}\ \textbf{192} (1971), 229--242.
\bibitem{Ch1984}
    G.~V.~Chudnovsky,
    Some analytic methods in the theory of transcendental numbers,
    chapitre 1 dans \textit{Math.\ Surveys Monogr.}, Vol.~19,
    Amer.\ Math.\ Soc., Providence, R.\ I., 1984. 
\bibitem{Fo1981}
    O.~Forster,
    \textit{Lectures on Riemann surfaces},
    Graduate texts in Math.\ Vol.~81, Springer-Verlag,
    New York, 1981.
\bibitem{He1873}
    Ch.~Hermite,
    Sur la fonction exponentielle,
    {\it C.\ R.\ Acad.\ Sci., Paris} {\bf 77} (1873),
    18--24, 74--79, 226--233, 285--293; {\OE}uvres de Charles Hermite, tome~III,
    Cambridge U.\ Press, 2009, pp. 150--181; aussi disponible sur le site
    http://www.bibnum.education.fr.
\bibitem{Ma1932}
    K.~Mahler,
    Zur Approximation der Exponentialfunktion und des Logarithmus. Teil I,
    {\it J.~Reine Angew.\ Math.\ }{\bf 166} (1932), 118--136.
\bibitem{Ma1976}
    K.~Mahler,
    \textit{Lectures on transcendental numbers},
    Lecture Notes in Mathematics \textbf{546},
    Springer-Verlag, Berlin-New York, 1976, xxi+254 pp.
\bibitem{Mc1971}
    R.~B.~Mcfeat,
    Geometry of numbers in adele spaces,
    \textit{Dissertationes Math.\ Rozprawy Mat.}\ \textbf{88} (1971), 49 pp.
\bibitem{Pe1929}
    O.~Perron,
    \textit{Die Lehre von den Kettenbr\"uchen},
    Chelsea, New York (1950), nouveau tirage de la seconde \'edition de 1929.
\bibitem{Ro1978}
    P.~Robba,
    Lemmes de Schwarz et lemmes d'approximations
    $p$-adiques en plusieurs variables,
    \textit{Invent.\ Math.\ }{\bf 48} (1978), 245--277.
\bibitem{Sc1980}
    W.~M.~Schmidt,
    \textit{Diophantine approximation},
    Lecture Note in Math., vol.~785, Sprin\-ger-Verlag, 1980.
\bibitem{Th2002}
    J.~L.~Thunder,
    Remarks on adelic geometry of numbers,
    \textit{Number theory for the millennium, III},
    (Urbana, IL, 2000), 253--259, A K Peters, 2002.
\bibitem{Wa2008}
    M.~Waldschmidt,
    An introduction to irrationality and transcendence methods,
    Lecture 2, 2008 Arizona Winter School, 32 pp;
    https://webusers.imj-prg.fr/~michel.waldschmidt/articles/pdf/AWSLecture2.pdf
\bibitem{We1885}
    K.~Weierstrass,
    Zu Lindemann's Abhandlung ``\"Uber die Ludolph'sche Zahl'',
    \textit{Berl.\ Ber.}\ (1885), 1067--1086.
\end{thebibliography}
\end{document}